\newtheorem{theorem}{Theorem}[section]
\newtheorem{lemma}[theorem]{Lemma}
\newtheorem{proposition}[theorem]{Proposition}
\newtheorem{corollary}[theorem]{Corollary}
\theoremstyle{definition}
\newtheorem{definition}[theorem]{Definition}
\numberwithin{equation}{section}
\renewcommand{\labelenumi}{\textup{(\theenumi)}}
\renewcommand{\phi}{\varphi}
\newcommand{\Homeo}{\operatorname{Homeo}}
\newcommand{\id}{\operatorname{id}}
\newcommand{\Ker}{\operatorname{Ker}}
\newcommand{\N}{\mathbb{N}}
\newcommand{\Z}{\mathbb{Z}}
\title{Continuous orbit equivalence of topological Markov shifts 
and dynamical zeta functions}
\author{Kengo Matsumoto \\
Department of Mathematics \\
Joetsu University of Education \\
Joetsu, Niigata 943-8512, Japan
\and
Hiroki Matui \\
Graduate School of Science \\
Chiba University \\
Inage-ku, Chiba 263-8522, Japan}
\date{}
\begin{document}
\maketitle

\def\det{{{\operatorname{det}}}}

\begin{abstract}
For continuously orbit equivalent
 one-sided topological Markov shifts $(X_A,\sigma_A)$
and $(X_B,\sigma_B)$,
 their eventually periodic points
and cocycle functions are studied.
As a result
we  directly construct an isomorphism
between their ordered cohomology groups
$(\bar{H}^A, \bar{H}^A_+)$ and
$(\bar{H}^B, \bar{H}^B_+)$.
We also show that the cocycle functions for the 
continuous orbit equivalences 
give rise to positive elements of  
the ordered cohomology,
so that 
the the zeta functions 
of continuously orbit equivalent topological Markov shifts
are related.
The set of Borel measures 
is shown to be invariant under 
continuous orbit equivalence of one-sided topological Markov shifts.
\end{abstract}




\def\Z{{ {\mathbb{Z}} }}
\def\N{{ {\mathbb{N}} }}
\def\Zp{{ {\mathbb{Z}}_+ }}
\def\FKL{{ {\cal F}_k^{l} }}
\def\FKI{{{\cal F}_k^{\infty}}}
\def\FLI{{ {\cal F}_{\Lambda} }}
\def\SES{{S_{\mu}E_l^iS_{\mu}^*}}
\def\AL{{{\cal A}_{\Lambda}}}
\def\OL{{{\cal O}_{\Lambda}}}
\def\OA{{{\cal O}_A}}
\def\BOA{{\Bar{\cal O}_A}}
\def\BOB{{\Bar{\cal O}_B}}
\def\OB{{{\cal O}_B}}
\def\FA{{{\cal F}_A}}
\def\FB{{{\cal F}_B}}
\def\DA{{{\frak D}_A}}
\def\BDA{{\Bar{\frak D}_A}}
\def\BDB{{\Bar{\frak D}_B}}
\def\DB{{{\frak D}_B}}
\def\HA{{{\frak H}_A}}
\def\HB{{{\frak H}_B}}
\def\Ext{{{\operatorname{Ext}}}}
\def\Per{{{\operatorname{Per}}}}
\def\PerB{{{\operatorname{PerB}}}}
\def\Homeo{{{\operatorname{Homeo}}}}
\def\HSA{{H_{\sigma_A}(X_A)}}
\def\Out{{{\operatorname{Out}}}}
\def\Aut{{{\operatorname{Aut}}}}
\def\Inn{{{\operatorname{Inn}}}}
\def\det{{{\operatorname{det}}}}
\def\cobdy{{{\operatorname{cobdy}}}}
\def\Ker{{{\operatorname{Ker}}}}
\def\ind{{{\operatorname{ind}}}}
\def\id{{{\operatorname{id}}}}
\def\supp{{{\operatorname{supp}}}}


\section{Introduction}

Let $A$ be an irreducible square matrix with entries in $\{0,1\}$.
Denote by $\bar{X}_A$ the shift space of 
the two-sided topological Markov shift $(\bar{X}_A, \bar{\sigma}_A)$
for $A$.
The ordered cohomology group $(\bar{H}^A,\bar{H}^A_+)$
is defined by the quotient group of the ordered abelian group
$C(\bar{X}_A,{\mathbb{Z}})$ of all ${\mathbb{Z}}$-valued continuous functions 
on $\bar{X}_A$ quoted by the subgroup
$\{ \xi - \xi \circ \bar{\sigma}_A \mid \xi \in C(\bar{X}_A,{\mathbb{Z}}) \}$.
The positive cone $\bar{H}^A_+$ consists of the classes of nonnegative functions in $C(\bar{X}_A,{\mathbb{Z}})$ 
(cf. \cite{BH}, \cite{Po}).
We similarly define the ordered cohomology group
$(H^A,H^A_+)$
for one-sided topological Markov shift $(X_A,\sigma_A)$.
The latter ordered group  
$(H^A,H^A_+)$ is naturally isomorphic to the former one  
$(\bar{H}^A,\bar{H}^A_+)$ (\cite[Lemma 3.1]{MM}).
In \cite{BH}, Boyle-Handelman have proved that 
the ordered cohomology group
$(\bar{H}^A,\bar{H}^A_+)$ 
is a complete invariant for  flow equivalence  of 
two-sided topological Markov shift
$(\bar{X}_A, \bar{\sigma}_A)$.
Continuous orbit equivalence of one-sided topological Markov shifts 
is regarded as a counterpart for flow equivalence  of 
two-sided topological Markov shifts 
(see \cite[Theorem 2.3]{MM}, \cite[Corollary 3.8]{MM}).
It is closely related to the classifications of 
both the \'etale groupoids associated to the one-sided Markov shifts
and the Cuntz-Krieger algebras
(see \cite{MaPacific}, \cite{MaPAMS}, \cite{MM},
\cite{MatuiPLMS}, \cite{MatuiPre2012}, 
cf. \cite{Hu}, \cite{Hu2}, \cite{Re}, \cite{Ro}). 
By using the above Boyle-Handelman's result,
 it has been proved in \cite{MM} that
continuous orbit equivalence of one-sided topological Markov 
shift
$(X_A, \sigma_A)$ yields flow equivalence of  
their two-sided topological Markov shift
$(\bar{X}_A, \bar{\sigma}_A)$.
By Parry-Sullivan \cite{PS},
this implies that 
the determinant 
$\det(\id-A)$ is invariant under 
continuous orbit equivalence of one-sided topological Markov 
shift
$(X_A, \sigma_A)$.
Let
$G_A$ denote the \'etale groupoid
for $(X_A,\sigma_A)$ 
whose reduced groupoid $C^*$-algebra 
$C^*_{r}(G^A)$ is isomorphic to the Cuntz-Krieger algebra 
 $\mathcal{O}_A$
(see \cite{MM}, \cite{MatuiPLMS}, \cite{MatuiPre2012}).
As a result, it has been shown that the following three assertions 
for one-sided topological Markov shifts
$(X_A, \sigma_A)$ and $(X_B,\sigma_B)$
are equivalent (\cite[Theorem 3.6]{MM}):
\begin{enumerate}
\renewcommand{\labelenumi}{(\roman{enumi})}
\item
 $(X_A, \sigma_A)$ and $(X_B,\sigma_B)$ 
are continuously orbit equivalent.
\item The \'etale groupoids $G_A$ and $G_B$ are isomorphic. 
\item The Cuntz-Krieger algebras $\mathcal{O}_A$ and $\mathcal{O}_B$ 
are isomorphic and $\det(\id-A)=\det(\id-B)$. 
\end{enumerate}
The method in \cite{MM}
by which 
continuous orbit equivalence of one-sided topological Markov 
shifts
yields  flow equivalence of the 
two-sided topological Markov shifts
has been due to a technique of the groupoids
associated with the one-sided topological Markov shifts.

In this paper, 
we will study 
eventually periodic points and cocycle functions of 
continuously orbit equivalent
one-sided topological Markov shifts $(X_A,\sigma_A)$
and $(X_B,\sigma_B)$.
We then directly construct an isomorphism
between their ordered cohomology groups
$(H^A, H^A_+)$ and
$(H^B, H^B_+)$
without using groupoid.
Let $A, B$ be square irreducible matrices with entries in $\{0,1 \}$.
Suppose that
the one-sided topological Markov shifts
$(X_A, \sigma_A)$ and $(X_B,\sigma_B)$ 
are continuously orbit equivalent via a homeomorphism
$h: X_A \longrightarrow X_B$ so that
\begin{align}
\sigma_B^{k_1(x)} (h(\sigma_A(x))) 
& = \sigma_B^{l_1(x)}(h(x))
\quad \text{ for}
\quad x \in X_A, \label{eq:orbiteqx} \\
\sigma_A^{k_2(y)} (h^{-1}(\sigma_B(y))) 
& = \sigma_A^{l_2(y)}(h^{-1}(y))
\quad \text{ for }
\quad y \in X_B \label{eq:orbiteqy}
\end{align}
for some continuous functions 
$k_1,l_1 \in C(X_A, {\mathbb{Z}}), 
 k_2,l_2 \in C(X_B, {\mathbb{Z}}). 
$
We will directly construct a map
$\Psi_h: C(X_B,{\mathbb{Z}}) \longrightarrow 
C(X_A,{\mathbb{Z}})$
which yields an isomorphism
from $H^B$ to $H^A$ as abelian groups.
We call the functions 
$c_1(x) = l_1(x) - k_1(x), x \in X_A
$ 
and
$ 
c_2(y) = l_2(y) - k_2(y), y \in X_B
$ 
the cocycle functions for $h$ and $h^{-1}$
respectively.
We will prove that 
the classes  $[c_1]$ in $H^A$ and  $[c_2]$ in $H^B$  
of 
$c_1$
and
$ c_2$ give rise to positive elements in the ordered groups
$(H^A, H^A_+)$ and $(H^B, H^B_+)$ 
respectively.
By using the positivities of 
$[c_1]$ and $[c_2]$,
we will show that 
$\Psi_h: C(X_B,{\mathbb{Z}}) \longrightarrow 
C(X_A,{\mathbb{Z}})$
induces an isomorphism of the ordered groups 
from $(H^B, H^B_+)$
to $(H^A, H^A_+)$  (Theorem \ref{thm:ordercoho}, cf. \cite{MM}).

 Continuous orbit equivalence relation 
 of one-sided topological Markov shifts 
 preserve their eventually periodic points
(Proposition \ref{prop:eventually}),
 so that the sets of periodic orbits of 
 the associated two-sided topological Markov shifts are preserved.
Hence there are some relation between their zeta functions.
We will show that 
 the dynamical zeta function 
 $\zeta_{[c_1]}(t)$ for the cocycle function $c_1$
 coincides with the zeta function $\zeta_B(t)$
 of the Markov shift $(\bar{X}_B, \bar{\sigma}_B)$ (Theorem \ref{thm:zeta}). 
 Namely 
 \begin{equation*}
 \zeta_{[c_1]}(t) = \zeta_B(t) \quad 
 \text{ and similarly }\quad 
 \zeta_{[c_2]}(t) = \zeta_A(t). 
 \end{equation*}
It is well-known that periodic points of a tansformation 
gives rise to invariant probability measures  
on the space by averaging the point mass.
As the continuous orbit equivalence preserves structure of periodic orbits,
it is reasonable to have a relationship between their 
shift-invariant measures.  
We will show that 
there exists an order isomorphism
between 
the set of $\sigma_A$-invariant regular Borel measures 
on $X_A$ and
the set of $\sigma_B$-invariant regular Borel measures 
on $X_B$.
If in particular,
the class $[c_1]$ (resp. $[c_2]$) of the cocycle function 
$c_1$ (resp. $c_2$) is cohomologus to $1$ in $H^A$ (resp. $H^B$),
 there exists an affine isomorphism
between 
the set of $\sigma_A$-invariant regular Borel probability measures 
on $X_A$ and
the set of $\sigma_B$-invariant regular Borel probability measures 
on $X_B$ (Theorem \ref{thm:measure}).
Hence the set of shift-invariant regular Borel measures on the one-sided 
topological Markov shift is invariant under continuous orbit equivalence.



Throughout the paper,
we denote by ${\mathbb{N}}$
the set of positive integers
and
by $\Zp$
the set of nonnegative integers,
respectively.

\section{Preliminaries}

Let $A=[A(i,j)]_{i,j=1}^N$ 
be an $N\times N$ matrix with entries in $\{0,1\}$,
where $1< N \in {\Bbb N}$.
Throughout the paper, 
we assume that $A$ has no rows or  columns identically equal to  zero. 
We denote by 
$X_A$ the shift space 
$$
X_A = \{ (x_n )_{n \in \Bbb N} \in \{1,\dots,N \}^{\Bbb N}
\mid
A(x_n,x_{n+1}) =1 \text{ for all } n \in {\Bbb N}
\}
$$
of the right one-sided topological Markov shift for $A$.
It is a compact Hausdorff space in natural  product topology
on $\{1,\dots,N\}^{\Bbb N}$.
The shift transformation $\sigma_A$ on $X_A$ defined by 
$\sigma_{A}((x_n)_{n \in \Bbb N})=(x_{n+1} )_{n \in \Bbb N}$
is a continuous surjective map on $X_A$.
The topological dynamical system 
$(X_A, \sigma_A)$ is called the (right) one-sided topological Markov shift for $A$.
We henceforth assume that 
$A$ is irreducible and satisfies condition (I) in the sense of Cuntz--Krieger \cite{CK}.

A word $\mu = \mu_1 \cdots \mu_k$ for $\mu_i \in \{1,\dots,N\}$
is said to be admissible for $X_A$ 
if $\mu$ appears in somewhere in an element $x$ in $X_A$.
The length of $\mu$ is $k$ and denoted by $|\mu|$.
 We denote by 
$B_k(X_A)$ the set of all admissible words of length $k$.
We set 
$B_*(X_A) = \cup_{k=0}^\infty B_k(X_A)$ 
where $B_0(X_A)$ denotes  the empty word $\emptyset$.
For $x = (x_n )_{n \in \Bbb N} \in X_A$ and 
$k,l \in {\mathbb{N}}$ with $k \le l$,
we set 
\begin{align*}
x_{[k,l]} & = x_k x_{k+1}\cdots x_l \in B_{l-k+1}(X_A),\\
x_{[k,\infty)} & = (x_k,x_{k+1},\dots ) \in X_A.
\end{align*}

For $x = (x_n )_{n \in \Bbb N} \in X_A$,
the orbit $orb_{\sigma_A}(x)$ of $x$ under $\sigma_A$ 
is defined by
$$
orb_{\sigma_A}(x) 
= \cup_{k=0}^\infty \cup_{l=0}^\infty 
\sigma_A^{-k}(\sigma_A^l(x)) \subset X_A.
$$
Let $(X_A, \sigma_A)$ and $(X_B,\sigma_B)$ 
be two topological Markov shifts.
If there exists a homeomorphism 
$h:X_A \longrightarrow X_B$ such that 
$h(orb_{\sigma_A}(x)) = orb_{\sigma_B}(h(x))$ for $x \in X_A$,
then 
$(X_A, \sigma_A)$ and $(X_B,\sigma_B)$ 
are said to be topologically orbit equivalent.
In this case, 
one has 
$h(\sigma_A(x)) \in 
\cup_{k=0}^\infty \cup_{l=0}^\infty \sigma_B^{-k}(\sigma_B^{l}(h(x)))$
for $x \in X_A$.
Hence
there exist $k_1,\, l_1:X_A \rightarrow \Zp$
such that
\begin{equation}
\sigma_B^{k_1(x)} (h(\sigma_A(x))) = \sigma_B^{l_1(x)}(h(x))
\quad \text{ for}
\quad x \in X_A. \label{eq:orbiteqx}
\end{equation}
Similarly 
there exist $k_2,\, l_2:X_B \rightarrow \Zp$
such that
\begin{equation}
\sigma_A^{k_2(y)} (h^{-1}(\sigma_B(y))) = \sigma_A^{l_2(y)}(h^{-1}(y))
\quad \text{ for }
\quad y \in X_B. \label{eq:orbiteqy}
\end{equation}
If we may take 
$k_1,\, l_1:X_A \longrightarrow \Zp$
and
$k_2,\, l_2:X_B \longrightarrow \Zp$
as continuous maps,
the topological Markov shifts 
$(X_A, \sigma_A)$ and $(X_B,\sigma_B)$  
are said to be
 {\it continuously orbit equivalent}.
If two one-sided topological Markov shifts are topologically conjugate,
one may take 
$k_1(x) =k_2(y)=0$
and
$l_1(x) =l_2(y)=1$
so that
they are continuously orbit equivalent.
For the two matrices
$A = 
\begin{bmatrix}
1 & 1 \\
1 & 1
\end{bmatrix}
$
and
$B = 
\begin{bmatrix}
1 & 1 \\
1 & 0
\end{bmatrix},
$
the topological Markov shifts   
$(X_A, \sigma_A)$ and $(X_B, \sigma_B)$
are continuously orbit equivalent, 
but not topologically conjugate (see \cite[Section 5]{MaPacific}).




Throughout the paper,
we assume that one-sided topological Markov shifts 
$(X_A, \sigma_A)$ and $(X_B,\sigma_B)$  
are
continuously orbit equivalent.
We fix 
a homeomorphism
$h: X_A \rightarrow X_B$
and  continuous functions
$k_1,l_1:X_A \rightarrow \Zp$
and
$k_2,l_2:X_B \rightarrow \Zp$
satisfying the equalities
\eqref{eq:orbiteqx} and \eqref{eq:orbiteqy}.

\section{Eventually periodic points}

In this section, we will show that the set of eventually periodic points 
is invariant under continuous orbit equivalence.
For $n \in {\mathbb{N}}$, put
\begin{align*}
k_1^n(x) = \sum_{i=0}^{n-1}k_1(\sigma_A^i(x)),\qquad
& l_1^n(x) = \sum_{i=0}^{n-1}l_1(\sigma_A^i(x)), \qquad x \in X_A, \\
k_2^n(y) = \sum_{i=0}^{n-1}k_2(\sigma_B^i(y)),\qquad
&l_2^n(y) = \sum_{i=0}^{n-1}l_2(\sigma_B^i(y)), \qquad y \in X_B. 
\end{align*}
We note that the following identities hold.
\begin{lemma}
\begin{align}
k_1^{n+m}(x) & = k_1^n(x) + k_1^m(\sigma_A^n(x)), \qquad x \in X_A, \\
l_1^{n+m}(x) & = l_1^n(x) + l_1^m(\sigma_A^n(x)), \qquad x \in X_A, \\
k_2^{n+m}(y) & = k_2^n(y) + k_2^m(\sigma_B^n(y)), \qquad y \in X_B, \\
l_2^{n+m}(y) & = l_2^n(y) + l_2^m(\sigma_B^n(y)), \qquad y \in X_B,
\end{align}
and
\begin{align}
\sigma_B^{k_1^n(x)}(h(\sigma_A^n(x)))
& = \sigma_B^{l_1^n(x)}(h(x)), \qquad x \in X_A, \label{eq:norbiteqx} \\
\sigma_A^{k_2^n(y)}(h^{-1}(\sigma_B^n(y))) 
& = \sigma_A^{l_2^n(y)}(h^{-1}(y)),
\qquad y \in X_B. \label{eq:sAk2n}
\end{align}
\end{lemma}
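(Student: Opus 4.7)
The plan has two parts. The first four identities are standard additivity (cocycle) properties of the Birkhoff sums, and I would dispatch them by splitting the defining sum at index $n$. For example,
\[
k_1^{n+m}(x) \;=\; \sum_{i=0}^{n+m-1} k_1(\sigma_A^i(x)) \;=\; \sum_{i=0}^{n-1} k_1(\sigma_A^i(x)) \;+\; \sum_{i=n}^{n+m-1} k_1(\sigma_A^i(x)),
\]
and after reindexing the tail via $j = i-n$, so that $\sigma_A^i(x) = \sigma_A^j(\sigma_A^n(x))$, the second sum becomes $k_1^m(\sigma_A^n(x))$, giving $k_1^{n+m}(x) = k_1^n(x) + k_1^m(\sigma_A^n(x))$. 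The three parallel identities for $l_1^{\bullet}$, $k_2^{\bullet}$, $l_2^{\bullet}$ are proved by the same manipulation with the appropriate function and shift.

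For \eqref{eq:norbiteqx} I would induct on $n \ge 1$, the base case $n=1$ being precisely \eqref{eq:orbiteqx}. For the inductive step, assuming $\sigma_B^{k_1^n(x)}(h(\sigma_A^n(x))) = \sigma_B^{l_1^n(x)}(h(x))$, I apply $\sigma_B^{l_1(\sigma_A^n(x))}$ to both sides and use the additivity $l_1^{n+1}(x) = l_1^n(x) + l_1(\sigma_A^n(x))$ to obtain
\[
\sigma_B^{k_1^n(x) + l_1(\sigma_A^n(x))}(h(\sigma_A^n(x))) \;=\; \sigma_B^{l_1^{n+1}(x)}(h(x)).
\]
Separately, the base case \eqref{eq:orbiteqx} applied at the point $\sigma_A^n(x)$ reads $\sigma_B^{k_1(\sigma_A^n(x))}(h(\sigma_A^{n+1}(x))) = \sigma_B^{l_1(\sigma_A^n(x))}(h(\sigma_A^n(x)))$; applying $\sigma_B^{k_1^n(x)}$ to both sides and using $k_1^{n+1}(x) = k_1^n(x) + k_1(\sigma_A^n(x))$ gives
\[
\sigma_B^{k_1^{n+1}(x)}(h(\sigma_A^{n+1}(x))) \;=\; \sigma_B^{k_1^n(x) + l_1(\sigma_A^n(x))}(h(\sigma_A^n(x))).
\]
Chaining these two equalities through the common right-hand side yields the desired statement at $n+1$. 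The identity \eqref{eq:sAk2n} is established by exactly the same induction with the roles of $A$ and $B$ (and of $h$ and $h^{-1}$) swapped.

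There is no real obstacle here; every step is routine bookkeeping of shift exponents. The only point deserving a moment's attention is that, in the inductive step, one must apply further powers of $\sigma_B$ (respectively $\sigma_A$) to both sides to splice the inductive hypothesis with the base case, rather than trying to push $h$ past a shift: the homeomorphism $h$ is not assumed to intertwine $\sigma_A$ and $\sigma_B$, so all of the structure has to be carried by the exponents of the shift maps.
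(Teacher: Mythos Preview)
Your proposal is correct. The paper itself gives no proof for this lemma, treating all six identities as immediate consequences of the definitions; your argument supplies precisely the routine splitting of the Birkhoff sums and the induction on $n$ that the authors leave implicit.
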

\begin{lemma}\label{lem:taileq}
Keep the above notations.
\begin{enumerate}
\renewcommand{\labelenumi}{(\roman{enumi})}
\item
If 
$x, z \in X_A$ satisfy
$\sigma_A^p (x)=\sigma_A^q (z)
$
for some
$p,q \in \Zp$, 
then we have
\begin{equation*}
\sigma_B^{l_1^p(x) +k_1^q(z)}(h(x))
=
\sigma_B^{k_1^p(x) +l_1^q(z)}(h(z)).
\end{equation*}
\item
If 
$y, w \in X_B$ satisfy
$\sigma_B^r (y)=\sigma_B^s(w)
$
for some
$r,s \in \Zp$, 
then we have
\begin{equation*}
\sigma_A^{l_2^r(y) +k_2^s(w)}(h^{-1}(y))
=\sigma_A^{k_2^r(y) +l_2^s(w)}(h^{-1}(w)).
\end{equation*}
\end{enumerate}
\end{lemma}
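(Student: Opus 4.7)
The plan is to derive both parts directly from the iterated orbit-equivalence relations \eqref{eq:norbiteqx} and \eqref{eq:sAk2n} established in the preceding lemma. The key observation is that the hypothesis $\sigma_A^p(x) = \sigma_A^q(z)$ provides a common "meeting point" $w := \sigma_A^p(x) = \sigma_A^q(z) \in X_A$; its image $h(w) \in X_B$ will then provide a common point on the $\sigma_B$-orbits of $h(x)$ and $h(z)$, and comparing the exponents needed to reach $h(w)$ from each side gives the required identity.

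Concretely, for part (i) I would apply \eqref{eq:norbiteqx} at $x$ with $n = p$ and at $z$ with $n = q$ to get
\begin{equation*}
\sigma_B^{k_1^p(x)}(h(w)) = \sigma_B^{l_1^p(x)}(h(x)),
\qquad
\sigma_B^{k_1^q(z)}(h(w)) = \sigma_B^{l_1^q(z)}(h(z)).
\end{equation*}
Applying $\sigma_B^{k_1^q(z)}$ to the first equality and $\sigma_B^{k_1^p(x)}$ to the second equality produces two expressions whose left-hand sides coincide with $\sigma_B^{k_1^p(x)+k_1^q(z)}(h(w))$. Equating the resulting right-hand sides yields exactly
\begin{equation*}
\sigma_B^{l_1^p(x) + k_1^q(z)}(h(x)) = \sigma_B^{k_1^p(x) + l_1^q(z)}(h(z)),
\end{equation*}
which is the desired identity.

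Part (ii) is entirely symmetric: replacing the roles of $(X_A,\sigma_A)$ and $(X_B,\sigma_B)$, the homeomorphism $h$ by $h^{-1}$, and the cocycle data $(k_1^n,l_1^n)$ by $(k_2^n,l_2^n)$, and applying \eqref{eq:sAk2n} in place of \eqref{eq:norbiteqx}, gives the analogous conclusion in $X_A$.

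I do not anticipate any genuine obstacle; the proof is essentially bookkeeping of exponents on top of the already-established cocycle-like identities. The only point requiring a small amount of care is making sure the sums $k_1^p, l_1^p, k_1^q, l_1^q$ are additively combined in the correct order so that the two derived equalities really do share a common left-hand side; this is transparent because all exponents are nonnegative integers and $\sigma_B$ is a semigroup action, so the exponents add without any commutation issues.
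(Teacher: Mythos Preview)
Your proposal is correct and follows essentially the same approach as the paper: both introduce the common point $u = \sigma_A^p(x) = \sigma_A^q(z)$ (your $w$), apply the iterated relation \eqref{eq:norbiteqx} to each of $x$ and $z$ to express $\sigma_B^{l_1^p(x)}(h(x))$ and $\sigma_B^{l_1^q(z)}(h(z))$ in terms of $h(u)$, and then add the complementary $k_1$-exponent to each side to force a common middle term. Part (ii) is handled symmetrically in both.
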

\begin{proof}
(i)
Put 
$u = \sigma_A^p (x)=\sigma_A^q(z) \in X_A$.
It follows that by \eqref{eq:norbiteqx}
\begin{equation*}
\sigma_B^{l_1^{p}(x)}(h(x)) =
\sigma_B^{k_1^{p}(x)}(h(\sigma_A^{p}(x))) =
\sigma_B^{k_1^{p}(x)}(h(u)), 
\end{equation*}
and similarly 
$
\sigma_B^{l_1^{q}(z)}(h(z)) =
\sigma_B^{k_1^{q}(z)}(h(u))
$
so that 
\begin{equation*}
\sigma_B^{l_1^p(x) +k_1^q(z)}(h(x)) =
\sigma_B^{k_1^{q}(z) + k_1^{p}(x)}(h(u)) =
\sigma_B^{k_1^p(x) +l_1^q(z)}(h(z)).
\end{equation*}
(ii) is similarly shown to (i).
\end{proof}
A point $x \in X_A$
is said to be eventually periodic if
there exist $p,q \in \Zp$
with $p \ne q$ such that 
$\sigma_A^p(x) = \sigma_A^q(x)$.
The number $|p-q|$ is called the eventual period
of $x$.
The least number of the eventual periods of $x$ 
is called the least eventual period of $x$. 
If in particular
$\sigma_A^p(x) = x$
for some $p\in {\mathbb{N}}$, 
$x \in X_A$
is said to be $p$-periodic .

The two identites (i) and (ii) in the following lemma 
play important r{\^{o}}le in our further discussions.
\begin{lemma}\label{lem:klp}
For $x \in X_A$, $y \in X_B$ and $p \in \Zp$, 
we have 
\begin{enumerate}
\renewcommand{\labelenumi}{(\roman{enumi})}
\item
$
k_2^{l_1^p(x)}(h(x)) + l_2^{k_1^p(x)}(h(\sigma_A^p(x))) + p
=
k_2^{k_1^p(x)}(h(\sigma_A^p(x))) + l_2^{l_1^p(x)}(h(x)).
$
\item
$
k_1^{l_2^p(y)}(h^{-1}(y)) + l_1^{k_2^p(y)}(h^{-1}(\sigma_B^p(y))) + p
=
k_1^{k_2^p(y)}(h^{-1}(\sigma_B^p(y))) + l_1^{l_2^p(y)}(h^{-1}(y)).
$
\end{enumerate}
\end{lemma}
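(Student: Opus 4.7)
My plan is to derive identity (i) as a direct application of Lemma \ref{lem:taileq}(ii) to the images $y = h(x)$ and $w = h(\sigma_A^p(x))$, producing an equality of shifted points in $X_A$, and then to convert this into an equality of integer exponents via a density argument. Identity (ii) will follow by the completely symmetric argument, exchanging the roles of $A$ and $B$, of $h$ and $h^{-1}$, and of $(k_1,l_1)$ and $(k_2,l_2)$, so I would focus on (i).

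For (i), I would first observe that \eqref{eq:norbiteqx} is precisely $\sigma_B^{l_1^p(x)}(h(x)) = \sigma_B^{k_1^p(x)}(h(\sigma_A^p(x)))$. Feeding this into Lemma \ref{lem:taileq}(ii) with $y := h(x)$, $w := h(\sigma_A^p(x))$, $r := l_1^p(x)$, $s := k_1^p(x)$, and using $h^{-1}(y) = x$ and $h^{-1}(w) = \sigma_A^p(x)$, produces
\begin{equation*}
\sigma_A^{l_2^{l_1^p(x)}(h(x)) + k_2^{k_1^p(x)}(h(\sigma_A^p(x)))}(x)
= \sigma_A^{k_2^{l_1^p(x)}(h(x)) + l_2^{k_1^p(x)}(h(\sigma_A^p(x)))}(\sigma_A^p(x)).
\end{equation*}
Absorbing the outer $\sigma_A^p$ on the right into the exponent puts this in the form $\sigma_A^{N(x)}(x) = \sigma_A^{M(x)}(x)$, where $M(x)$ and $N(x)$ are precisely the two sides of the identity (i).

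It then remains to pass from this equality of iterated images to equality of the integer exponents themselves. Whenever $x \in X_A$ is not eventually periodic, $\sigma_A^{N(x)}(x) = \sigma_A^{M(x)}(x)$ forces $N(x) = M(x)$ outright, since otherwise $\sigma_A^{\min(N(x),M(x))}(x)$ would be periodic. Since $A$ is irreducible and satisfies Cuntz--Krieger's condition (I), the set of non-eventually-periodic points is dense in $X_A$. Both $M$ and $N$ are finite sums of compositions of the $\mathbb{Z}$-valued continuous (hence locally constant) functions $k_i, l_i$ with $h$ and the $\sigma_A^i$, and so are themselves locally constant integer-valued functions on $X_A$; two such functions that agree on a dense set agree everywhere. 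The main potential obstacle is justifying density of the non-eventually-periodic points, but this is a standard consequence of condition (I); the algebraic step itself is only a substitution into Lemma \ref{lem:taileq}.
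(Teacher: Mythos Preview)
Your proof is correct and follows essentially the same route as the paper: obtain $\sigma_A^{M(x)}(x)=\sigma_A^{N(x)}(x)$ from \eqref{eq:norbiteqx} combined with the cocycle relation for $(k_2,l_2)$, then upgrade to $M=N$ by the density of non-eventually-periodic points. The only difference is cosmetic: you invoke Lemma~\ref{lem:taileq}(ii) as a black box for the algebraic step, whereas the paper re-derives that computation inline by applying $\sigma_A^{k_2^m(h(\sigma_A^p(x)))+k_2^n(h(x))}$ to both sides of \eqref{eq:norbiteqx} and simplifying.
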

\begin{proof}
(i)
Put $n = l_1^p(x), m= k_1^p(x)$.
By  \eqref{eq:norbiteqx}, 
one has
\begin{equation}
  h^{-1}(\sigma_B^m(h(\sigma_A^p(x)))) 
= h^{-1}(\sigma_B^n(h(x))). \label{eq:hsBk1n}
\end{equation}
By applying 
$\sigma_A^{k_2^m(h(\sigma_A^p(x))) + k_2^n(h(x))}$
to
\eqref{eq:hsBk1n},
one has
\begin{equation}
\sigma_A^{k_2^m(h(\sigma_A^p(x))) + k_2^n(h(x))}(
h^{-1}(\sigma_B^m(h(\sigma_A^p(x)))))
 =
\sigma_A^{k_2^m(h(\sigma_A^p(x))) + k_2^n(h(x))}(
 h^{-1}(\sigma_B^n(h(x)))). \label{eq:sAhsBk1n}
\end{equation}
The left hand side of \eqref{eq:sAhsBk1n}
goes to
\begin{equation*}
\sigma_A^{k_2^n(h(x))}(
\sigma_A^{k_2^m(h(\sigma_A^p(x)))}(
h^{-1}(\sigma_B^m(h(\sigma_A^p(x))))))
 =
\sigma_A^{k_2^n(h(x))+ l_2^m(h(\sigma_A^p(x))) +p}(x).
\end{equation*}
The right hand side of \eqref{eq:sAhsBk1n}
goes to
\begin{equation*}
\sigma_A^{k_2^m(h(\sigma_A^p(x)))}(
\sigma_A^{k_2^n(h(x))}(
 h^{-1}(\sigma_B^n(h(x)))))
=
\sigma_A^{k_2^m(h(\sigma_A^p(x)))+ l_2^n(h(x))}(x).
\end{equation*}
Hence we have
\begin{equation*}
\sigma_A^{k_2^{l_1^p(x)}(h(x))+ l_2^{k_1^p(x)}(h(\sigma_A^p(x))) +p}(x)
=
\sigma_A^{k_2^{k_1^p(x)}(h(\sigma_A^p(x)))+ l_2^{l_1^p(x)}(h(x))}(x).
\end{equation*}
Now suppose that there exists $x \in X_A$ such that 
\begin{equation}
k_2^{l_1^p(x)}(h(x))+ l_2^{k_1^p(x)}(h(\sigma_A^p(x))) +p
\ne
k_2^{k_1^p(x)}(h(\sigma_A^p(x)))+ l_2^{l_1^p(x)}(h(x)) \label{eq:ne}
\end{equation}
so that
$x$ is an eventually periodic point.
Since
the functions $k_1, l_1, k_2, l_2$
are all continuous, 
\eqref{eq:ne} hold for all elements of a neighborhood of $x$.
Hence there exists an open set of $X_A$ whose  elements
are all eventually periodic points.
It is a contradiction to the fact that 
 the set of all non eventually periodic points is dense in $X_A$.
Therefore the identity  
\begin{equation*}
k_2^{l_1^p(x)}(h(x))+ l_2^{k_1^p(x)}(h(\sigma_A^p(x))) +p
=
k_2^{k_1^p(x)}(h(\sigma_A^p(x)))+ l_2^{l_1^p(x)}(h(x)) 
\end{equation*}
holds for all $x \in X_A$.

(ii) is similarly shown to (i).
\end{proof}
\begin{lemma}\label{lem:periodic}
Let
$x$ be a periodic point in $X_A$.
Then $h(x)$ is an eventually periodic point in $X_B$.
\end{lemma}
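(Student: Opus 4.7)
The plan is to combine equation \eqref{eq:norbiteqx} with Lemma \ref{lem:klp}(i), both specialized to the periodic point $x$.

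First I would set $p \in \mathbb{N}$ with $\sigma_A^p(x) = x$ and substitute this directly into \eqref{eq:norbiteqx}. Since $h(\sigma_A^p(x)) = h(x)$, the identity collapses to
\begin{equation*}
\sigma_B^{k_1^p(x)}(h(x)) = \sigma_B^{l_1^p(x)}(h(x)).
\end{equation*}
If the integers $k_1^p(x)$ and $l_1^p(x)$ were distinct, this would immediately exhibit $h(x)$ as an eventually periodic point of $(X_B,\sigma_B)$, with eventual period dividing $|l_1^p(x) - k_1^p(x)|$.

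The only thing to rule out, then, is the degenerate case $k_1^p(x) = l_1^p(x)$. Here is where Lemma \ref{lem:klp}(i) enters. Substituting $\sigma_A^p(x) = x$ into that identity gives
\begin{equation*}
k_2^{l_1^p(x)}(h(x)) + l_2^{k_1^p(x)}(h(x)) + p
= k_2^{k_1^p(x)}(h(x)) + l_2^{l_1^p(x)}(h(x)).
\end{equation*}
Under the hypothetical equality $k_1^p(x) = l_1^p(x)$, the four $k_2$ and $l_2$ terms pair off and cancel, forcing $p = 0$, which contradicts $p \in \mathbb{N}$. Hence $k_1^p(x) \ne l_1^p(x)$, and the conclusion of the previous paragraph finishes the proof.

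I do not anticipate a serious obstacle here: the argument is essentially a two-line consequence of the periodicity applied to the two cocycle identities already established. The only place one must be a bit careful is in verifying that the $k_2$ and $l_2$ terms really do cancel pairwise in the degenerate case — this is immediate once one reads off that in each side of Lemma \ref{lem:klp}(i) the arguments $h(x)$ and $h(\sigma_A^p(x)) = h(x)$ coincide and the upper indices $k_1^p(x)$ and $l_1^p(x)$ are assumed equal.
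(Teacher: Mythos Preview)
Your proposal is correct and follows essentially the same argument as the paper's proof: both use \eqref{eq:norbiteqx} with $\sigma_A^p(x)=x$ to obtain $\sigma_B^{k_1^p(x)}(h(x))=\sigma_B^{l_1^p(x)}(h(x))$, and both invoke Lemma~\ref{lem:klp}(i) to rule out $k_1^p(x)=l_1^p(x)$ via the forced contradiction $p=0$. The only difference is the order of presentation.
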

\begin{proof}
Assume that 
$\sigma^p(x) = x$ for some $p \in {\mathbb{N}}$.
By the above lemma (i),
we have
\begin{equation*}
k_2^{l_1^p(x)}(h(x)) + l_2^{k_1^p(x)}(h(x)) + p
=
k_2^{k_1^p(x)}(h(x)) + l_2^{l_1^p(x)}(h(x))
\end{equation*}
so that $l_1^p(x) \ne k_1^p(x)$.
By the identity \eqref{eq:norbiteqx}
with $\sigma_A^p(x) = x$,
one has
\begin{equation}
\sigma_B^{k_1^p(x)}(h(x)) = \sigma_B^{l_1^p(x)}(h(x))
\label{eq:3.10}
\end{equation}
which implies that
$h(x)$ is an eventually periodic point in $X_B$.
\end{proof}

\begin{proposition}\label{prop:eventually}
Suppose that one-sided topological Markov shifts 
$(X_A, \sigma_A)$ and $(X_B,\sigma_B)$  
are continuously orbit equivalent via
a homeomorphism
$h: X_A \rightarrow X_B$
satisfying the equalities
\eqref{eq:orbiteqx} and \eqref{eq:orbiteqy}.
Let
$x$ be an eventually periodic point in $X_A$.
Then $h(x)$ is an eventually periodic point in $X_B$.
Therefore the set of eventually periodic points of 
a one-sided topological Markov shift 
is invariant under continuous orbit equivalence.
\end{proposition}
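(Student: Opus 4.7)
\medskip

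\noindent\textbf{Proof proposal.} The plan is to reduce the eventually periodic case to the periodic case already handled by Lemma \ref{lem:periodic}, by exploiting that continuous orbit equivalence respects one-sided tails.

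First I would write the eventually periodic hypothesis in normalized form: there exist $p,q \in \Zp$ with $p > q$ such that $\sigma_A^p(x) = \sigma_A^q(x)$, so setting $z := \sigma_A^q(x) \in X_A$ makes $z$ a genuinely periodic point (of period $p-q$). By Lemma \ref{lem:periodic} applied to $z$, the image $h(z)$ is eventually periodic in $X_B$, so there exist $r,s \in \Zp$ with $r \ne s$ and
\begin{equation*}
\sigma_B^{r}(h(z)) = \sigma_B^{s}(h(z)).
\end{equation*}

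Next I would transport this eventual periodicity from $h(z)$ back to $h(x)$ along the orbit-equivalence cocycle. Applying the iterated identity \eqref{eq:norbiteqx} at time $q$ with the convention $k_1^0 = l_1^0 = 0$ gives
\begin{equation*}
\sigma_B^{k_1^q(x)}(h(z)) = \sigma_B^{k_1^q(x)}(h(\sigma_A^q(x))) = \sigma_B^{l_1^q(x)}(h(x)),
\end{equation*}
so $h(x)$ and $h(z)$ lie on a common forward tail. Applying $\sigma_B^{r}$ and $\sigma_B^{s}$ respectively to this equality and using $\sigma_B^{r}(h(z)) = \sigma_B^{s}(h(z))$, I obtain
\begin{equation*}
\sigma_B^{l_1^q(x) + r}(h(x)) = \sigma_B^{k_1^q(x) + r}(h(z)) = \sigma_B^{k_1^q(x) + s}(h(z)) = \sigma_B^{l_1^q(x) + s}(h(x)).
\end{equation*}
Since $r \ne s$, the two exponents $l_1^q(x)+r$ and $l_1^q(x)+s$ are distinct, which by definition shows that $h(x)$ is eventually periodic in $X_B$. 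Applying the same argument to $h^{-1}$ (which is a continuous orbit equivalence of $(X_B,\sigma_B)$ with $(X_A,\sigma_A)$ by symmetry of the hypotheses) yields the reverse implication, so the set of eventually periodic points is preserved.

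I do not anticipate a serious obstacle: Lemma \ref{lem:periodic} already supplies the delicate step (ruling out $l_1^p(x) = k_1^p(x)$ via Lemma \ref{lem:klp}), and the remaining work is just a tail-extension argument. The only mild care needed is in handling the boundary case $q = 0$, where the sums $k_1^q, l_1^q$ are empty and the identity collapses to $h(x) = h(z)$, which is consistent. The symmetry argument using $h^{-1}$ relies on the paper's standing hypothesis \eqref{eq:orbiteqy}, so no additional structure is required.
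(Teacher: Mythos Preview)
Your proof is correct and follows essentially the same route as the paper: reduce to the periodic case via Lemma~\ref{lem:periodic}, then transport the eventual periodicity of $h(\sigma_A^q(x))$ back to $h(x)$ along the tail relation. The only cosmetic difference is that the paper packages the tail relation as Lemma~\ref{lem:taileq} (obtaining abstract $q_1,q_2$ with $\sigma_B^{q_1}(h(x))=\sigma_B^{q_2}(h(\tilde x))$), whereas you invoke the explicit identity \eqref{eq:norbiteqx} directly to produce the concrete exponents $l_1^q(x)$ and $k_1^q(x)$.
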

\begin{proof}
Let
$x$ be an eventually periodic point in $X_A$
such that
$\sigma_A^{p+q}(x) = \sigma_A^p(x)$
for some $p \in \Zp, q \in {\mathbb{N}}$.
Put $\tilde{x} = \sigma_A^p(x)$. 
 By the preceding lemma,
 $h(\tilde{x})$ is an eventually periodic point in $X_B$.
Take $p_1,p_2 \in \Zp$ with $p_1\ne p_2$
such that 
$\sigma_B^{p_1}(h(\tilde{x})) = \sigma_B^{p_2}(h(\tilde{x}))$.
By Lemma \ref{lem:taileq}, 
 there exist
$q_1,q_2 \in \Zp$ 
such that 
$\sigma_B^{q_1}(h(x)) = \sigma_B^{q_2}(h(\tilde{x}))$,
so that we have
\begin{equation*}
  \sigma_B^{p_1+q_1}(h(x)) 
= \sigma_B^{p_1}(\sigma_B^{q_2}(h(\tilde{x})))
= \sigma_B^{p_2}(\sigma_B^{q_2}(h(\tilde{x})))
= \sigma_B^{p_2+q_1}(h(x)). 
\end{equation*}
Since
$p_1 + q_1 \ne p_2 + q_1$,
$h(x)$ is an eventually periodic point in $X_B$.
\end{proof}

\section{Construction of an isomorphism 
from $C(X_B,{\mathbb{Z}})$ to $C(X_A,{\mathbb{Z}})$}
We are assuming that one-sided topological Markov shifts 
$(X_A, \sigma_A)$ and $(X_B,\sigma_B)$  
are
continuously orbit equivalent through 
a homeomorphism
$h$ from $X_A$ to $X_B$
with  continuous functions
$k_1,l_1:X_A \longrightarrow \Zp$
and
$k_2,l_2:X_B \longrightarrow \Zp$
satisfying 
\eqref{eq:orbiteqx} and \eqref{eq:orbiteqy}.
Let us denote by 
$C(X_A,{\mathbb{Z}})$ (resp. $C(X_B,{\mathbb{Z}})$)
the abelian group of all integer valued continuous functions
on $X_A$ (resp. $X_B$).
In this section, we will directly construct 
an isomorphism from $C(X_B,{\mathbb{Z}})$ to $C(X_A,{\mathbb{Z}})$
compatible to the shifts.
 \begin{lemma}\label{lem:cocycle}
The function $c_1(x) = l_1(x) - k_1(x)$ for $x \in X_A$
does not depend on the choice of $k_1, l_1 \in C(X_A,\Z)$
as long as  satisfing   
\eqref{eq:orbiteqx}.
\end{lemma}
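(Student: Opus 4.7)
The plan is to take a second pair $k_1', l_1' \in C(X_A,\Z)$ that also satisfies \eqref{eq:orbiteqx}, and show that $l_1'(x) - k_1'(x) = l_1(x) - k_1(x)$ for every $x \in X_A$. The idea is standard: use the two orbit relations to obtain two distinct shift exponents taking $h(x)$ to the same point of $X_B$, and then exploit the fact that on a dense set of $X_A$ this forces the exponents to agree.

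Concretely, fix $x \in X_A$. Applying $\sigma_B^{k_1'(x)}$ to the identity $\sigma_B^{k_1(x)}(h(\sigma_A(x))) = \sigma_B^{l_1(x)}(h(x))$, and $\sigma_B^{k_1(x)}$ to the corresponding identity with primes, both left-hand sides become $\sigma_B^{k_1(x)+k_1'(x)}(h(\sigma_A(x)))$. Equating the right-hand sides yields
\begin{equation*}
\sigma_B^{l_1(x) + k_1'(x)}(h(x)) = \sigma_B^{l_1'(x) + k_1(x)}(h(x)).
\end{equation*}
If $h(x)$ is not eventually periodic in $X_B$, the two exponents must be equal, and hence $l_1(x) - k_1(x) = l_1'(x) - k_1'(x)$ at such an $x$.

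To propagate this to every $x$, I would invoke Proposition \ref{prop:eventually} applied to both $h$ and $h^{-1}$: the homeomorphism $h$ maps eventually periodic points to eventually periodic points in both directions, so it carries non-eventually-periodic points of $X_A$ bijectively onto non-eventually-periodic points of $X_B$. Thus the identity $l_1 - k_1 = l_1' - k_1'$ holds on the subset of $X_A$ consisting of non-eventually-periodic points. This subset is dense in $X_A$ (the same density fact is used in the proof of Lemma \ref{lem:klp}), and since $l_1, k_1, l_1', k_1'$ are all continuous, the equality extends to all of $X_A$ by continuity.

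The only mildly nontrivial input is the combination of density of non-eventually-periodic points with the fact that $h$ preserves this set; both are already available at this point of the paper. The remainder is a routine one-line manipulation of the orbit relations, so I expect no serious obstacle.
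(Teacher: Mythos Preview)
Your proof is correct and follows essentially the same route as the paper's: derive a relation $\sigma_B^{m(x)}(h(x)) = \sigma_B^{n(x)}(h(x))$ with continuous $m,n$, observe that $m(x)=n(x)$ whenever $h(x)$ is not eventually periodic, and conclude by density and continuity. One small remark: you do not actually need Proposition~\ref{prop:eventually} here, since the set $\{x \in X_A : h(x)\text{ is not eventually periodic}\}$ is just the $h$-preimage of a dense subset of $X_B$ and is therefore dense in $X_A$ because $h$ is a homeomorphism.
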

\begin{proof}
Let 
$k'_1, l'_1 \in C(X_A,\Z)$
be another functions for $k_1, l_1$ satisfying
\begin{equation}
\sigma_B^{k'_1(x)} (h(\sigma_A(x))) = \sigma_B^{l'_1(x)}(h(x))
\quad \text{ for}
\quad x \in X_A. \label{eq:primeorbiteqx}
\end{equation}
Since $k_1, k'_1$ are both continuous,
there exists $K \in \N$ such that 
$k_1(x), k'_1(x) \le K$ for all $x \in X_A$. 
Put
$c'_1(x) = l'_1(x) - k'_1(x)$ for $x \in X_A$
so that
\begin{equation*}
 \sigma_B^{c_1(x)+K}(h(x)) = \sigma_B^{c'_1(x)+K}(h(x))
 \quad \text{ for all } x \in X_A. 
\end{equation*}
Suppose that 
$c_1(x_0) \ne c'_1(x_0)$ for some $x_0 \in X_A$.
There exists a clopen neighborhood $U$ of $x_0$
such that 
$
c_1(x) \ne c'_1(x) 
$
for all 
$ x \in U.$
As  
$
c_1(x) +K \ne c'_1(x) +K
$
for all $x \in U$,
the points $h(x)$ for all $x \in U$ 
are eventually periodic points,
which is a contradiction to
the fact that the set of non eventualy periodic points is 
dense in $X_B$.
Therefore we conclude that
$c_1(x) = c'_1(x)$ for all $x \in X_A$.
\end{proof}

For $f \in C(X_B,\Z )$, define
\begin{equation}
\Psi_{h}(f)(x)
= \sum_{i=0}^{l_1(x)-1} f(\sigma_B^i(h(x))) 
- \sum_{j=0}^{k_1(x)-1} f(\sigma_B^j(h(\sigma_A(x)))),
 \quad 
 x \in X_A. \label{eq:Psihfx}
\end{equation}
It is easy to see that 
$\Psi_h(f) \in C(X_A,\Z)$.
Thus $\Psi_h: C(X_B,\Z) \longrightarrow C(X_A,\Z)$
gives rise to a homomorphism of abelian groups.

\begin{lemma}
$\Psi_h: C(X_B,\Z) \longrightarrow C(X_A,\Z)$
does not depend on the choice of the functions 
$k_1, l_1$
as long as  satisfing   
\eqref{eq:orbiteqx}.
\end{lemma}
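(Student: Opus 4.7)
The plan is to show the pointwise identity $\Psi_h^{(k_1,l_1)}(f)(x)=\Psi_h^{(k'_1,l'_1)}(f)(x)$ for every $x \in X_A$ and every $f \in C(X_B,\Z)$, where the superscript records which pair of functions is plugged into \eqref{eq:Psihfx}. By Lemma \ref{lem:cocycle}, the cocycles agree, so $m := k'_1 - k_1 = l'_1 - l_1$ is a well-defined continuous $\Z$-valued function on the compact space $X_A$, hence bounded.

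The heart of the argument is a shift-invariance claim: for any continuous function $m' : X_A \to \Zp$, the pair $(k_1 + m', l_1 + m')$ still satisfies \eqref{eq:orbiteqx} (obtained by applying $\sigma_B^{m'(x)}$ to both sides), and moreover $\Psi_h^{(k_1 + m', l_1 + m')}(f) = \Psi_h^{(k_1, l_1)}(f)$. Expanding the two definitions, the difference telescopes to
\begin{equation*}
\sum_{t=0}^{m'(x)-1}\left[ f(\sigma_B^{l_1(x)+t}(h(x))) - f(\sigma_B^{k_1(x)+t}(h(\sigma_A(x)))) \right],
\end{equation*}
and the bracketed terms vanish because applying $\sigma_B^t$ to \eqref{eq:orbiteqx} gives $\sigma_B^{l_1(x)+t}(h(x)) = \sigma_B^{k_1(x)+t}(h(\sigma_A(x)))$.

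To compare the two arbitrary pairs, I would then pick a constant $N \in \N$ with $N \geq \max_{x \in X_A}|m(x)|$, which exists by compactness. Both $N$ and $N - m$ are continuous $\Zp$-valued functions on $X_A$, and by the definition of $m$ one has $k_1 + N = k'_1 + (N - m)$ and $l_1 + N = l'_1 + (N - m)$. Applying the shift-invariance claim twice then yields
\begin{equation*}
\Psi_h^{(k_1, l_1)}(f) = \Psi_h^{(k_1 + N, l_1 + N)}(f) = \Psi_h^{(k'_1 + (N-m), l'_1 + (N-m))}(f) = \Psi_h^{(k'_1, l'_1)}(f),
\end{equation*}
which is the desired independence. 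I anticipate no real obstacle: the whole argument is a cocycle telescoping that rests on a single iteration of \eqref{eq:orbiteqx}, with Lemma \ref{lem:cocycle} providing the crucial matching $k'_1 - k_1 = l'_1 - l_1$ that makes the shift by a common continuous function legitimate.
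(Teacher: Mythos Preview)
Your argument is correct and rests on the same telescoping identity as the paper's proof: both use Lemma~\ref{lem:cocycle} to match the cocycles and then cancel the extra terms via $\sigma_B^{l_1(x)+t}(h(x))=\sigma_B^{k_1(x)+t}(h(\sigma_A(x)))$. The only difference is organizational: the paper fixes $x$, assumes (say) $l'_1(x)<l_1(x)$, and cancels directly, whereas you route both pairs through a common shifted pair $(k_1+N,l_1+N)$, which neatly sidesteps the pointwise case distinction on the sign of $l_1(x)-l'_1(x)$.
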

\begin{proof}
Let 
$k'_1, l'_1 \in C(X_A,\Z)$
be another functions for $k_1, l_1$ 
satisfying
\eqref{eq:primeorbiteqx}.
We fix an arbitrary $x \in X_A$.
By Lemma \ref{lem:cocycle},
we see that
$l_1(x) - k_1(x) = l'_1(x) - k'_1(x)$.
Assume that
$l'_1(x) < l_1(x)$
so that
$k_1(x) - k'_1(x) = l_1(x) - l'_1(x)>0$.
We put
\begin{equation*}
\Psi'_{h}(f)(x)
= \sum_{i=0}^{l'_1(x)-1} f(\sigma_B^i(h(x))) 
- \sum_{j=0}^{k'_1(x)-1} f(\sigma_B^j(h(\sigma_A(x)))), 
\end{equation*}
so that 
\begin{equation*}
\Psi_{h}(f)(x) - \Psi'_{h}(f)(x)
= \sum_{i=l'_1(x)}^{l_1(x)-1} f(\sigma_B^i(h(x)))
- \sum_{j=k'_1(x)}^{k_1(x)-1} f(\sigma_B^j(h(\sigma_A(x)))). 
\end{equation*}
As
$\sigma_B^{l'_1(x)+j}(h(x)) = \sigma_B^{k'_1(x)+j}(h(\sigma_A(x)))
$
for $j=0,1,\dots, l_1(x)-l'_1(x)-1 (=k_1(x)-k'_1(x)-1)$,
we see that
$\Psi_{h}(f)(x) - \Psi'_{h}(f)(x)
=0$.
\end{proof}

The equalities in the following lemma are 
basic in our further discussions.
\begin{lemma} \label{lem:Am}
For $f \in C(X_B,{\mathbb{Z}})$,
$x \in X_A$ and $m=1,2,\dots,$ 
the following equalities hold:
\begin{align*}
& \sum_{i=0}^{m-1} \{
\sum_{i'=0}^{l_1(\sigma_A^i(x))-1} f(\sigma_B^{i'}(h(\sigma_A^i(x))))
-
\sum_{j'=0}^{k_1(\sigma_A^i(x))-1} f(\sigma_B^{j'}(h(\sigma_A^{i+1}(x)))) \}\\
=
&
\sum_{i'=0}^{l_1^m (x)-1} f(\sigma_B^{i'}(h(x)))
-
\sum_{j'=0}^{k_1^m (x)-1} f(\sigma_B^{j'}(h(\sigma_A^{m}(x)))).
\end{align*}
\end{lemma}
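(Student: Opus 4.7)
The plan is to prove the identity by induction on $m \geq 1$. The base case $m=1$ is immediate: since $l_1^1(x)=l_1(x)$ and $k_1^1(x)=k_1(x)$, both sides coincide with the definition \eqref{eq:Psihfx} of $\Psi_h(f)(x)$.

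For the inductive step, I would assume the identity for $m$ and consider $m+1$. The left hand side for $m+1$ equals the left hand side for $m$ (which by induction equals the right hand side for $m$) plus the single term at $i=m$, namely
$\sum_{i'=0}^{l_1(\sigma_A^m(x))-1} f(\sigma_B^{i'}(h(\sigma_A^m(x)))) - \sum_{j'=0}^{k_1(\sigma_A^m(x))-1} f(\sigma_B^{j'}(h(\sigma_A^{m+1}(x))))$.
Using the additive cocycle identities $l_1^{m+1}(x)=l_1^m(x)+l_1(\sigma_A^m(x))$ and $k_1^{m+1}(x)=k_1^m(x)+k_1(\sigma_A^m(x))$ proved in the preceding lemma, I would split the right hand side sums for $m+1$ at $l_1^m(x)$ and $k_1^m(x)$ respectively.

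The crux is then a bookkeeping exercise that invokes \eqref{eq:norbiteqx} in two distinct places. First, to handle the tail piece $\sum_{i'=l_1^m(x)}^{l_1^{m+1}(x)-1} f(\sigma_B^{i'}(h(x)))$ coming from the $l$-side, I would use $\sigma_B^{l_1^m(x)}(h(x))=\sigma_B^{k_1^m(x)}(h(\sigma_A^m(x)))$ (the $n=m$ instance of \eqref{eq:norbiteqx}) to rewrite each summand as $f(\sigma_B^{k_1^m(x)+j}(h(\sigma_A^m(x))))$. Second, to handle the piece $\sum_{j'=k_1(\sigma_A^m(x))}^{k_1^{m+1}(x)-1} f(\sigma_B^{j'}(h(\sigma_A^{m+1}(x))))$ on the $k$-side, I would apply \eqref{eq:norbiteqx} to the point $\sigma_A^m(x)$, namely $\sigma_B^{k_1(\sigma_A^m(x))}(h(\sigma_A^{m+1}(x)))=\sigma_B^{l_1(\sigma_A^m(x))}(h(\sigma_A^m(x)))$, to convert those terms to ones based at $h(\sigma_A^m(x))$.

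After these two substitutions, every sum in the difference between the two sides is expressed in terms of $g(i):=f(\sigma_B^i(h(\sigma_A^m(x))))$, and the whole identity collapses to the trivial arithmetic statement $\sum_{i=0}^{a-1}g(i)+\sum_{i=a}^{a+b-1}g(i)=\sum_{i=0}^{b-1}g(i)+\sum_{i=b}^{a+b-1}g(i)$ with $a=k_1^m(x)$ and $b=l_1(\sigma_A^m(x))$ (or the analogous pairing). The main obstacle, then, is purely notational: correctly tracking four nested indices and ensuring that each range shift is justified by the appropriate instance of \eqref{eq:norbiteqx}, rather than requiring any further structural input. Once all sums are rebased at $h(\sigma_A^m(x))$, nothing is left to prove.
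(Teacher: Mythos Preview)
Your proposal is correct and follows essentially the same route as the paper: induction on $m$, the base case being trivial, and the inductive step handled by the cocycle identities $l_1^{m+1}(x)=l_1^m(x)+l_1(\sigma_A^m(x))$, $k_1^{m+1}(x)=k_1^m(x)+k_1(\sigma_A^m(x))$ together with the two instances of \eqref{eq:norbiteqx} (at $x$ with $n=m$, and at $\sigma_A^m(x)$ with $n=1$) that you identify. The paper carries out the same substitutions and cancellations explicitly rather than rebasing everything at $h(\sigma_A^m(x))$, but the content is identical.
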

\begin{proof}
For $m=1$, the left hand side of the desired equality is
\begin{equation*}
\sum_{i'=0}^{l_1(x)-1}f(\sigma_B^{i'}(h(x)))
-
\sum_{j'=0}^{k_1(x)-1} f(\sigma_B^{j'}(h(\sigma_A(x)))).
\end{equation*}
which is equal to the right hand side of the desired equality. 

We assume that the desired formula holds for some $m$.
It then follows that 
\begin{align*}
& \sum_{i=0}^{m} \{
\sum_{i'=0}^{l_1 (\sigma_A^i(x))-1} 
f(\sigma_B^{i'}(h(\sigma_A^i(x))))
-\sum_{j'=0}^{k_1 (\sigma_A^i(x))-1} 
f(\sigma_B^{j'}(h(\sigma_A^{i+1}(x)))) \}\\
=
& \sum_{i=0}^{m-1} \{
\sum_{i'=0}^{l_1 (\sigma_A^i(x))-1} 
f(\sigma_B^{i'}(h(\sigma_A^i(x))))
-
\sum_{j'=0}^{k_1 (\sigma_A^i(x))-1} 
f(\sigma_B^{j'}(h(\sigma_A^{i+1}(x)))) \}\\
+
&
\sum_{i'=0}^{l_1 (\sigma_A^m(x))-1} 
f(\sigma_B^{i'}(h(\sigma_A^m(x))))
-
\sum_{j'=0}^{k_1 (\sigma_A^m(x))-1} 
f(\sigma_B^{j'}(h(\sigma_A^{m+1}(x)))) \\
=
&  
\sum_{i'=0}^{l_1^m (x)-1} 
f(\sigma_B^{i'}(h(x)))
-
\sum_{j'=0}^{k_1^m (x)-1} 
f(\sigma_B^{j'}(h(\sigma_A^{m}(x))))  \\
+
&
\sum_{i'=0}^{l_1 (\sigma_A^m(x))-1} 
f(\sigma_B^{i'}(h(\sigma_A^m(x))))
-
\sum_{j'=0}^{k_1 (\sigma_A^m(x))-1} 
f(\sigma_B^{j'}(h(\sigma_A^{m+1}(x)))) \\
=
&
\sum_{i'=0}^{l_1^m (x)-1} 
f(\sigma_B^{i'}(h(x))) \\
-
&
\{
\sum_{j'=0}^{l_1 (\sigma_A^m(x)) + k_1^m(x)-1} 
f(\sigma_B^{j'}(h(\sigma_A^{m} (x)))) 
-
\sum_{j'=k_1^m(x)}^{l_1 (\sigma_A^m(x)) +k_1^m(x)-1} 
f(\sigma_B^{j'}(h(\sigma_A^{m}(x)))) \} \\
+
&
\{
\sum_{i'=0}^{l_1 (\sigma_A^m(x)) + k_1^m(x)-1} 
f(\sigma_B^{i'}(h(\sigma_A^m(x)))) 
-
\sum_{i'=l_1 (\sigma_A^m(x))}^{l_1 (\sigma_A^m(x)) + k_1^m(x)-1} 
f(\sigma_B^{i'}
(h(\sigma_A^m(x)))) \}  
\\
-
&
\sum_{j'=0}^{k_1 (\sigma_A^m(x))-1} 
f(\sigma_B^{j'}(h(\sigma_A^{m+1}(x)))) 
\end{align*}
The second summand of the first $\{ \, \cdot \, \}$ above  goes to
\begin{align*}
\sum_{j'=0}^{l_1 (\sigma_A^m(x)) -1} 
f(\sigma_B^{j'}(\sigma_B^{k_1^m(x)}(h(\sigma_A^{m}(x))))) 
=
&
\sum_{j'=0}^{l_1 (\sigma_A^m(x)) -1} 
f(\sigma_B^{j'}(\sigma_B^{l_1^m(x)}(h(x)))) \\
=
&
\sum_{i'=l_1^m(x)}^{l_1^{m+1}(x) -1} 
f(\sigma_B^{i'}(h(x))).
\end{align*}
The second summand of the second $\{ \, \cdot \, \}$ above goes to
\begin{align*}
 \sum_{i'=0}^{ k_1^m(x)-1} 
f(\sigma_B^{i'}(\sigma_B^{l_1 (\sigma_A^m(x))}
(h(\sigma_A^m(x))))) 
=
& \sum_{i'=0}^{ k_1^m(x)-1} 
f(\sigma_B^{i'}(\sigma_B^{k_1 (\sigma_A^m(x))}
(h(\sigma_A^{m+1}(x))))) \\
=
& \sum_{j'=k_1(\sigma_A^m(x))}^{ k_1^{m+1}(x)-1} 
f(\sigma_B^{j'}(h(\sigma_A^{m+1}(x)))).
\end{align*}
Hence we have
\begin{align*}
& \sum_{i=0}^{m} \{
\sum_{i'=0}^{l_1 (\sigma_A^i(x))-1} 
f(\sigma_B^{i'}(h(\sigma_A^i(x))))
-
\sum_{j'=0}^{k_1 (\sigma_A^i(x))-1} 
f(\sigma_B^{j'}(h(\sigma_A^{i+1}(x)))) \}\\
=
&
\sum_{i'=0}^{l_1^m (x)-1} 
f(\sigma_B^{i'}(h(x)))
+
\sum_{i'=l_1^m(x)}^{l_1^{m+1}(x) -1} 
f(\sigma_B^{i'}(h(x))) \\
-
& 
 \sum_{j'=k_1(\sigma_A^m(x))}^{ k_1^{m+1}(x)-1} 
f(\sigma_B^{j'}(h(\sigma_A^{m+1}(x))))
-
\sum_{j'=0}^{k_1 (\sigma_A^m(x))-1} 
f(\sigma_B^{j'}(h(\sigma_A^{m+1}(x)))) \\
=
&
\sum_{i'=0}^{l_1^{m+1} (x)-1} f(\sigma_B^{i'}(h(x)))
- 
 \sum_{j'=0}^{ k_1^{m+1}(x)-1} 
f(\sigma_B^{j'}(h(\sigma_A^{m+1}(x))))
\end{align*}
which shows that the desired equality for $m+1$ holds.
\end{proof}

For $g \in C(X_A,\Z )$,
let us define 
$\Psi_{h^{-1}}(g) \in C(X_B,\Z)$
by substituting $h^{-1}$ for $h$ in \eqref{eq:Psihfx}
as follows:
\begin{equation}
\Psi_{h^{-1}}(g)(y)
= \sum_{i=0}^{l_2(y)-1} g(\sigma_A^i(h^{-1}(y))) 
- \sum_{j=0}^{k_2(y)-1} g(\sigma_A^j(h^{-1}(\sigma_B(y)))),
 \quad 
 y \in X_B. \label{eq:Psiinverseh}
\end{equation}
In Lemma \ref{lem:Am},
by substituting 
$h(x)$(resp. $h(\sigma_A(x))$) for $x$,
$\sigma_B$ for $\sigma_A$,
$h^{-1}$ for $h$,
$\sigma_A$ for $\sigma_B$,
$l_2$ for $l_1$,
$k_2$ for $k_1$,
and
$m=l_1(x)$ (resp. $m=k_1(x)$),
respectively,
we have
the following lemma (i)(resp. (ii)).
\begin{lemma}\label{lem:Psi}
For $g \in C(X_A,{\mathbb{Z}})$ and $x \in X_A$, we have 
\begin{enumerate}
\renewcommand{\labelenumi}{(\roman{enumi})}
\item
\begin{align*}
& \sum_{i=0}^{l_1(x)-1} 
\Psi_{h^{-1}}(g)(\sigma_B^i(h(x)) )\\
=
&
\sum_{i'=0}^{l_2^{l_1(x)} (h(x))-1} 
g(\sigma_A^{i'}(x))
-
\sum_{j'=0}^{k_2^{l_1(x)}(h(x))-1}  
g(\sigma_A^{j'}(h^{-1}(\sigma_B^{l_1(x)}(h(x))))).
\end{align*}
\item
\begin{align*}
& \sum_{j=0}^{k_1(x)-1} 
\Psi_{h^{-1}}(g)(\sigma_B^j(h(\sigma_A(x)))) \\
=
&
\sum_{i'=0}^{l_2^{k_1(x)} (h(\sigma_A(x)))-1} 
g(\sigma_A^{i'+1}(x))
-
\sum_{j'=0}^{k_2^{k_1(x)}(h(\sigma_A(x)))-1}  
g(\sigma_A^{j'}(h^{-1}(\sigma_B^{k_1(x)}(h(\sigma_A(x)))))).
\end{align*}
\end{enumerate}
\end{lemma}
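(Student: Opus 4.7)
The plan is to recognize that Lemma~\ref{lem:Am} is really a formal identity about the continuous orbit equivalence data $(h, k_1, l_1)$ flowing from $X_A$ to $X_B$, and that the whole setup is symmetric under the involution that exchanges $A \leftrightarrow B$, $h \leftrightarrow h^{-1}$, and $(k_1,l_1) \leftrightarrow (k_2,l_2)$. Applying Lemma~\ref{lem:Am} after this involution, and then specializing the free parameters, should yield both (i) and (ii) with essentially no new calculation.

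Concretely, the first step is to write down the "dual" version of Lemma~\ref{lem:Am}: for $g \in C(X_A,\Z)$, $y \in X_B$, and $m \in \N$,
\begin{align*}
& \sum_{i=0}^{m-1} \Bigl\{ \sum_{i'=0}^{l_2(\sigma_B^i(y))-1} g(\sigma_A^{i'}(h^{-1}(\sigma_B^i(y)))) - \sum_{j'=0}^{k_2(\sigma_B^i(y))-1} g(\sigma_A^{j'}(h^{-1}(\sigma_B^{i+1}(y)))) \Bigr\} \\
= {} & \sum_{i'=0}^{l_2^m(y)-1} g(\sigma_A^{i'}(h^{-1}(y))) - \sum_{j'=0}^{k_2^m(y)-1} g(\sigma_A^{j'}(h^{-1}(\sigma_B^m(y)))).
\end{align*}
This is not a new theorem; it is Lemma~\ref{lem:Am} applied verbatim to the orbit equivalence data in the opposite direction (the hypotheses are entirely symmetric, so the induction on $m$ carries over unchanged). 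The second step is the crucial observation that by the definition \eqref{eq:Psiinverseh} of $\Psi_{h^{-1}}$, the bracketed quantity in the left-hand side is exactly $\Psi_{h^{-1}}(g)(\sigma_B^i(y))$, so the LHS collapses to $\sum_{i=0}^{m-1} \Psi_{h^{-1}}(g)(\sigma_B^i(y))$.

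The third step is the specialization. For (i), set $y = h(x)$ and $m = l_1(x)$; using $h^{-1}(h(x)) = x$ inside the first sum on the right gives exactly the stated identity. For (ii), set $y = h(\sigma_A(x))$ and $m = k_1(x)$; using $h^{-1}(h(\sigma_A(x))) = \sigma_A(x)$ and $\sigma_A^{i'}(\sigma_A(x)) = \sigma_A^{i'+1}(x)$ converts the first sum on the right into $\sum_{i'=0}^{l_2^{k_1(x)}(h(\sigma_A(x)))-1} g(\sigma_A^{i'+1}(x))$, matching the statement.

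The main (and essentially only) obstacle is purely bookkeeping: making sure the symmetry substitution is performed consistently in every subscript and superscript (swapping $A$ with $B$, $h$ with $h^{-1}$, $k_1,l_1$ with $k_2,l_2$, and $f \in C(X_B,\Z)$ with $g \in C(X_A,\Z)$), and then specializing the parameters $y$ and $m$ carefully. Since no new dynamical input beyond Lemma~\ref{lem:Am} and the definition of $\Psi_{h^{-1}}$ is needed, I would keep the written proof very short, essentially just pointing to the symmetry and the two specializations.
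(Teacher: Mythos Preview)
Your proposal is correct and is exactly the argument the paper gives: the paper states (in the sentence immediately preceding the lemma) that the result follows from Lemma~\ref{lem:Am} by substituting $h(x)$ (resp.\ $h(\sigma_A(x))$) for $x$, swapping $\sigma_A \leftrightarrow \sigma_B$, $h \leftrightarrow h^{-1}$, $(k_1,l_1) \leftrightarrow (k_2,l_2)$, and taking $m = l_1(x)$ (resp.\ $m = k_1(x)$). Your write-up is slightly more explicit in identifying the bracketed summand with $\Psi_{h^{-1}}(g)(\sigma_B^i(y))$, but the substance is identical.
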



We will prove the following proposition.
\begin{proposition}\label{prop:compo} 
$\Psi_h \circ \Psi_{h^{-1}} = \id_{C(X_A,{\mathbb{Z}})}$
and similarly
$\Psi_{h^{-1}}\circ \Psi_h = \id_{C(X_B,{\mathbb{Z}})}$.
\end{proposition}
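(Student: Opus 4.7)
The plan is to expand $\Psi_h \circ \Psi_{h^{-1}}$ by brute force using the definition and then recognise the resulting expression as a telescoping cancellation. Starting from $g \in C(X_A,\Z)$ and $x \in X_A$, the outer application of $\Psi_h$ gives
\begin{equation*}
\Psi_h(\Psi_{h^{-1}}(g))(x)
= \sum_{i=0}^{l_1(x)-1}\Psi_{h^{-1}}(g)(\sigma_B^i(h(x)))
- \sum_{j=0}^{k_1(x)-1}\Psi_{h^{-1}}(g)(\sigma_B^j(h(\sigma_A(x)))),
\end{equation*}
and I would invoke Lemma \ref{lem:Psi} (i) and (ii) to convert each of these two sums into an explicit double-sum of values of $g$. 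Introducing the shorthand
$a=l_2^{l_1(x)}(h(x))$, $b=k_2^{l_1(x)}(h(x))$, $c=l_2^{k_1(x)}(h(\sigma_A(x)))$, $d=k_2^{k_1(x)}(h(\sigma_A(x)))$,
the expression becomes a difference of four sums, two of the form $\sum g(\sigma_A^{\bullet}(x))$ and two of the form $\sum g(\sigma_A^{\bullet}(h^{-1}(z)))$, where $z := \sigma_B^{l_1(x)}(h(x)) = \sigma_B^{k_1(x)}(h(\sigma_A(x)))$ by \eqref{eq:orbiteqx}.

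The next step is to use the two key identities. First, Lemma \ref{lem:klp} (i) at $p=1$ gives the arithmetic relation $b+c+1 = d+a$, so $b-d = a-c-1$. Second, relation \eqref{eq:sAk2n} applied with $y=h(\sigma_A(x))$, $n=k_1(x)$ yields $\sigma_A^d(h^{-1}(z)) = \sigma_A^{c+1}(x)$, and with $y=h(x)$, $n=l_1(x)$ it yields $\sigma_A^b(h^{-1}(z)) = \sigma_A^a(x)$. Thus the two sums with base point $h^{-1}(z)$ can be re-expressed as sums with base point $x$, with the index ranges matching up exactly because of the arithmetic relation above.

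After this substitution the four sums collapse: the terms $g(\sigma_A^{i'}(x))$ for $1 \le i' \le a-1$ that arise from $\sum_{i'=0}^{a-1}$ are cancelled precisely by the reindexed contributions from the $h^{-1}(z)$ sums plus the subtracted $\sum_{i'=1}^c$, leaving only the $i'=0$ term $g(x)$. (A minor case split on whether $b\ge d$ or $b<d$ tells us which of the two $h^{-1}(z)$ sums is rewritten, but both cases end the same way thanks to $b-d=a-c-1$.) This establishes $\Psi_h \circ \Psi_{h^{-1}} = \id_{C(X_A,\Z)}$. The reverse identity $\Psi_{h^{-1}} \circ \Psi_h = \id_{C(X_B,\Z)}$ follows by the symmetric argument, interchanging the roles of $A$ and $B$, of $h$ and $h^{-1}$, and using Lemma \ref{lem:klp} (ii) and \eqref{eq:norbiteqx} in place of their counterparts. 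The main obstacle is purely bookkeeping: keeping the four index ranges straight and applying the $\sigma_A$-shift relations to align the $h^{-1}(z)$-based summands with the $x$-based summands; once the arithmetic identity from Lemma \ref{lem:klp} is in hand, the cancellation is forced.
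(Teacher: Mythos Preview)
Your proposal is correct and follows essentially the same route as the paper: expand $\Psi_h(\Psi_{h^{-1}}(g))(x)$ via Lemma~\ref{lem:Psi}, use Lemma~\ref{lem:klp}(i) at $p=1$ for the arithmetic relation (the paper writes this as $k_3(x)+1=l_3(x)$ with $k_3=b+c$, $l_3=a+d$ in your notation), and then use \eqref{eq:sAk2n} together with \eqref{eq:orbiteqx} to convert the $h^{-1}(z)$-based sums into $x$-based sums so that everything telescopes to $g(x)$. The only cosmetic difference is that the paper pads both $x$-sums out to common lengths $l_3(x)$ and $k_3(x)$ and cancels the added pieces, whereas you subtract the $h^{-1}(z)$-sums directly and handle the sign via a $b\gtrless d$ case split; the computations are the same.
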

\begin{proof}
We put
\begin{align*}
k_3(x) &=k_2^{l_1(x)}(h(x)) + l_2^{k_1(x)}(h(\sigma_A(x))), \qquad x \in X_A, \\
l_3(x) &=l_2^{l_1(x)}(h(x)) + k_2^{k_1(x)}(h(\sigma_A(x))), \qquad x \in X_A.
\end{align*}
By Lemma \ref{lem:klp} we have 
$k_3(x) +1 = l_3(x)$
so that the identity
for $g \in C(X_A,\Z)$ 
\begin{equation}
g(x)
= \sum_{i=0}^{l_3(x)-1} g(\sigma_A^i(x)) 
- \sum_{j=0}^{k_3(x)-1} g(\sigma_A^{j+1}(x)),
 \quad 
 x \in X_A \label{eq:Psighfx}
\end{equation}
holds.
By the above lemma,
we have 
\begin{align*}
  &\Psi_{h}(\Psi_{h^{-1}}(g))(x) \\
=&  \sum_{i=0}^{l_1(x)-1} \Psi_{h^{-1}}(g)(\sigma_B^i(h(x)) )
- \sum_{j=0}^{k_1(x)-1} \Psi_{h^{-1}}(g)(\sigma_B^j(h(\sigma_A(x))) ) \\
=
&
\{ 
\sum_{i'=0}^{l_2^{l_1(x)} (h(x))-1} g(\sigma_A^{i'}(x))
-
\sum_{j'=0}^{k_2^{l_1(x)}(h(x))-1}  
g(\sigma_A^{j'}(h^{-1}(\sigma_B^{l_1(x)}(h(x)))))
\} 
\\
-&
\{
\sum_{i'=0}^{l_2^{k_1(x)} (h(\sigma_A(x)))-1} 
g(\sigma_A^{i'+1}(x))
-
\sum_{j'=0}^{k_2^{k_1(x)}(h(\sigma_A(x)))-1}  
g(\sigma_A^{j'}(h^{-1}(\sigma_B^{k_1(x)}(h(\sigma_A(x))))) \} 
\end{align*}
The first $\{ \, \cdot \, \}$ above  goes to
\begin{align}
& \sum_{i'=0}^{l_3(x)-1} 
g(\sigma_A^{i'}(x)) 
\label{eq:circ1A} \\
-
& \{ \sum_{i'=l_2^{l_1(x)}(h(x))}^{l_3(x)-1} 
g(\sigma_A^{i'}(x)) 
+
 \sum_{j'=0}^{k_2^{l_1(x)}(h(x))-1}  
g(\sigma_A^{j'}(h^{-1}(\sigma_B^{l_1(x)}(h(x))))) \}.
 \label{eq:circ1C}
\end{align}
The second $\{ \, \cdot \, \}$ above goes to
\begin{align}
& \sum_{i'=0}^{k_3(x)-1} 
g(\sigma_A^{i'+1}(x)) 
\label{eq:circ2A} \\
-
& \{
\sum_{i'=l_2^{k_1(x)}(h(\sigma_A(x)))}^{k_3(x)-1} 
g(\sigma_A^{i'+1}(x)) 
+
\sum_{j'=0}^{k_2^{k_1(x)}(h(\sigma_A(x)))-1}  
g(\sigma_A^{j'}(h^{-1}(\sigma_B^{k_1(x)}(h(\sigma_A(x)))))) \}.
\label{eq:circ2C}
\end{align}
Hence we have
\begin{equation*}
\Psi_{h}(\Psi_{h^{-1}}(g))(x)
= \{ \eqref{eq:circ1A} +\eqref{eq:circ1C}\}
 - \{ \eqref{eq:circ2A} +\eqref{eq:circ2C}\} 
\end{equation*}
Since
$
\sigma_A^{l_2^{l_1(x)}(h(x))}(x) 
=\sigma_A^{k_2^{l_1(x)}(h(x))}(h^{-1}(\sigma_B^{l_1(x)}(h(x)))),
$
we  have
\begin{align*}
& - \eqref{eq:circ1C} \\
=
& \sum_{j'=0}^{k_2^{k_1(x)}(h(\sigma_A(x)))-1} 
g(\sigma_A^{j'}(\sigma_A^{l_2^{l_1(x)}(h(x))}(x))) 
+
 \sum_{j'=0}^{k_2^{l_1(x)}(h(x))-1}  
g(\sigma_A^{j'}(h^{-1}(\sigma_B^{l_1(x)}(h(x))))) \\
=
&
 \sum_{j'=0}^{k_2^{l_1(x)}(h(x))+ k_2^{k_1(x)}(h(\sigma_A(x)))-1}  
g(\sigma_A^{j'}(h^{-1}(\sigma_B^{l_1(x)}(h(x))))). 
\end{align*}
Since
$
\sigma_A^{l_2^{k_1(x)}(h(\sigma_A(x)))}(\sigma_A(x)) 
=
\sigma_A^{k_2^{k_1(x)}(h(\sigma_A(x)))}(
h^{-1}(\sigma_B^{k_1(x)}(h(\sigma_A(x)))),
$
we  have
\begin{align*}
& - \eqref{eq:circ2C} \\
=
& \sum_{j'=0}^{ k_2^{l_1(x)}(h(x))-1} 
g(\sigma_A^{j'+ l_2^{k_1(x)}(h(\sigma_A(x)))}(\sigma_A(x)))) 
+
\sum_{j'=0}^{k_2^{k_1(x)}(h(\sigma_A(x)))-1}  
g(\sigma_A^{j'}(h^{-1}(\sigma_B^{k_1(x)}(h(\sigma_A(x))))) \\
=
&
\sum_{j'=0}^{k_2^{k_1(x)}(h(\sigma_A(x))) +k_2^{l_1(x)}(h(x))-1}  
g(\sigma_A^{j'}(h^{-1}(\sigma_B^{k_1(x)}(h(\sigma_A(x))))) 
\end{align*}
We thus have
$
\eqref{eq:circ1C} 
=
\eqref{eq:circ2C} 
$
by
\eqref{eq:orbiteqx}
and
$
\eqref{eq:circ1A}-\eqref{eq:circ2A}
= g(x)
$
by
\eqref{eq:Psighfx}
so that
\begin{equation*}
\Psi_{h}(\Psi_{h^{-1}}(g))(x)
= g(x).
\end{equation*}
Similarly we have
$
\Psi_{h}\circ \Psi_{h^{-1}} = \id_{C(X_A,{\mathbb{Z}})}.
$
\end{proof}

\begin{lemma}\label{lem:cobdy} \hspace{6cm}
\begin{enumerate}
\renewcommand{\labelenumi}{(\roman{enumi})}
\item
$
 \Psi_h(f- f \circ \sigma_B) 
 = f \circ h - f \circ h \circ \sigma_A,
\qquad
f \in C(X_B,\Z). 
$
\item
$ \Psi_{h^{-1}}(g- g \circ \sigma_A) 
 = g \circ h^{-1} - g \circ h^{-1} \circ \sigma_B,
\qquad
g \in C(X_A,\Z). 
$
\end{enumerate}
\end{lemma}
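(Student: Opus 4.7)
My plan is to prove (i) by a direct telescoping computation and then obtain (ii) by the symmetric argument (swapping the roles of $h,\sigma_A,k_1,l_1$ with $h^{-1},\sigma_B,k_2,l_2$, and invoking \eqref{eq:orbiteqy} in place of \eqref{eq:orbiteqx}).

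For (i), I would start by plugging $F := f - f\circ\sigma_B$ into the defining formula \eqref{eq:Psihfx}. This gives, for a fixed $x\in X_A$,
\begin{align*}
\Psi_h(F)(x) = &\sum_{i=0}^{l_1(x)-1}\bigl[f(\sigma_B^i(h(x))) - f(\sigma_B^{i+1}(h(x)))\bigr] \\
& - \sum_{j=0}^{k_1(x)-1}\bigl[f(\sigma_B^j(h(\sigma_A(x)))) - f(\sigma_B^{j+1}(h(\sigma_A(x))))\bigr].
\end{align*}
Each of the two bracketed sums telescopes. The first collapses to $f(h(x)) - f(\sigma_B^{l_1(x)}(h(x)))$ and the second to $f(h(\sigma_A(x))) - f(\sigma_B^{k_1(x)}(h(\sigma_A(x))))$, so altogether
\begin{equation*}
\Psi_h(F)(x) = f(h(x)) - f(h(\sigma_A(x))) - f(\sigma_B^{l_1(x)}(h(x))) + f(\sigma_B^{k_1(x)}(h(\sigma_A(x)))).
\end{equation*}

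The key step is now invoking the cocycle identity \eqref{eq:orbiteqx}, which states precisely that $\sigma_B^{k_1(x)}(h(\sigma_A(x))) = \sigma_B^{l_1(x)}(h(x))$. Applying $f$ to both sides makes the last two terms cancel, leaving $\Psi_h(F)(x) = f(h(x)) - f(h(\sigma_A(x)))$, which is exactly the right-hand side of (i) evaluated at $x$. Part (ii) is obtained by the same computation with $\Psi_{h^{-1}}$ from \eqref{eq:Psiinverseh} and the identity \eqref{eq:orbiteqy}.

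There is essentially no obstacle here: the lemma is a purely formal manipulation, and the content is simply that coboundaries of $C(X_B,\Z)$ under the shift $\sigma_B$ are sent to coboundaries of $C(X_A,\Z)$ under the shift $\sigma_A$ by $\Psi_h$. This is the crucial compatibility that will let $\Psi_h$ descend to a well-defined homomorphism of the cohomology groups $H^B \to H^A$ in the next section.
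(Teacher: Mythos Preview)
Your proof is correct and is essentially identical to the paper's own proof: both plug $f-f\circ\sigma_B$ into the definition of $\Psi_h$, telescope the two sums, and cancel the remaining terms using \eqref{eq:orbiteqx}, with (ii) handled symmetrically.
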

\begin{proof}
(i)
For $f \in C(X_B,\Z)$ and $x \in X_A$,
we have
\begin{align*}
 & \Psi_h(f- f \circ \sigma_B)(x) \\
=& \sum_{i=0}^{l_1(x)-1} (f- f \circ \sigma_B)(\sigma_B^i(h(x))) 
- \sum_{j=0}^{k_1(x)-1} (f- f \circ \sigma_B)(\sigma_B^j(h(\sigma_A(x)))) \\
= &f(h(x)) - f(\sigma_B^{l_1(x)}(h(x))) 
-  f(h(\sigma_A(x))) + f(\sigma_B^{k_1(x)}(h(\sigma_A(x))))\\ 
= & f(h(x)) - f(h(\sigma_A(x))).
\end{align*}
(ii) is similarly shown.
\end{proof}


\section{Ordered cohomology groups}
Let us denote by
\begin{equation*}
\bar{X}_A = \{ (x_n )_{n \in {\mathbb{Z}}} 
\in \{1,\dots,N \}^{\mathbb{Z}}
\mid
A(x_n,x_{n+1}) =1 \text{ for all } n \in {\mathbb{Z}}
\}
\end{equation*}
the shift space of the two-sided topological Markov shift 
$(\bar{X}_A, \bar{\sigma}_A)$ for $A$
with shift transformation
$\bar{\sigma}_A$ on $\bar{X}_A$ 
defined by 
$\bar{\sigma}_{A}((x_n)_{n \in {\mathbb{Z}}})
=(x_{n+1} )_{n \in {\mathbb{Z}}}$
which is a homeomorphism on $\bar{X}_A$.
Set 
\begin{equation*}
\bar{H}^A
=C(\bar{X}_A,{\mathbb{Z}})/\{\xi-\xi\circ\bar\sigma_A\mid\xi
\in C(\bar{X}_A,{\mathbb{Z}})\}. 
\end{equation*}
The equivalence class of a function $\xi\in C(\bar{X}_A, {\mathbb{Z}})$ in 
$\bar{H}^A$ 
is written $[\xi]$. 
We define the positive cone $\bar{H}^A_+$ by 
\begin{equation*}
\bar{H}^A_+=\{[\xi]\in\bar{H}^A
\mid\xi(x)\geq0\quad\forall x\in\bar{X}_A\}. 
\end{equation*}
The pair $(\bar{H}^A,\bar{H}^A_+)$ is called 
the ordered cohomology group of $(\bar{X}_A,\bar\sigma_A)$ 
(see \cite[Section 1.3]{BH}, \cite{Po}). 
M. Boyle and D. Handelman
have proved that 
$(\bar{X}_A,\bar\sigma_A)$ and $(\bar{X}_B,\bar\sigma_B)$ are 
flow equivalent if and only if
the ordered cohomology groups 
$(\bar{H}^A,\bar{H}^A_+)$ and $(\bar{H}^B,\bar{H}^B_+)$ are isomorphic
(\cite[Theorem 1.12]{BH}).

In the same way as above, 
the ordered group
$(H^A,H^A_+)$ 
for the one-sided topological Markov shift $(X_A,\sigma_A)$
has been introduced in \cite{MM} by setting
\begin{equation*}
H^A=C(X_A,{\mathbb{Z}})/\{\xi-\xi\circ\sigma_A\mid\xi\in C(X_A,{\mathbb{Z}})\}
\end{equation*}
and 
\begin{equation*}
H^A_+=\{[\xi]\in H^A\mid\xi(x)\geq0\quad\forall x\in X_A\}. 
\end{equation*}
It has been proved that the ordered groups 
$(\bar H^A,\bar H^A_+)$ and $(H^A,H^A_+)$ are actually isomorphic 
(\cite[Lemma 3.1]{MM}).

By Proposition \ref{prop:compo}
and Lemma \ref{lem:cobdy},
we see
\begin{proposition}
Let $h:X_A \longrightarrow X_B$
be a homeomorphism which gives rise to a continuous orbit equivalence
between $(X_A,\sigma_A)$ and $(X_B,\sigma_B)$.
Then 
$\Psi_h : C(X_B,\Z) \longrightarrow C(X_A,\Z)$
induces an isomomorphism
$\bar{\Psi}_h: H^B \longrightarrow H^A$
of abelian groups in a natural way.
\end{proposition}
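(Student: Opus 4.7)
The plan is to extract the proposition directly from Proposition \ref{prop:compo} and Lemma \ref{lem:cobdy}: the former gives invertibility of $\Psi_h$ at the level of $C(X_\bullet,\Z)$, while the latter shows that $\Psi_h$ and $\Psi_{h^{-1}}$ each carry coboundaries to coboundaries, so both descend to the quotient groups $H^B$ and $H^A$.

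First, I would recall that $\Psi_h : C(X_B,\Z) \to C(X_A,\Z)$ is a $\Z$-linear group homomorphism, which is immediate from the defining formula \eqref{eq:Psihfx}. Next, to show that $\Psi_h$ descends to a well-defined map $\bar{\Psi}_h : H^B \to H^A$, I need to verify that $\Psi_h$ sends the subgroup $\{f - f\circ\sigma_B \mid f \in C(X_B,\Z)\}$ into the subgroup $\{\eta - \eta\circ\sigma_A \mid \eta \in C(X_A,\Z)\}$. But this is exactly the content of Lemma \ref{lem:cobdy}(i): for any $f \in C(X_B,\Z)$,
\begin{equation*}
\Psi_h(f - f\circ\sigma_B) \;=\; (f\circ h) - (f\circ h)\circ\sigma_A,
\end{equation*}
and since $f \circ h \in C(X_A,\Z)$ this is a coboundary in $C(X_A,\Z)$. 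Hence $\bar{\Psi}_h([f]) := [\Psi_h(f)]$ is well-defined on $H^B$ and is a homomorphism of abelian groups. The symmetric argument, using Lemma \ref{lem:cobdy}(ii), gives a homomorphism $\bar{\Psi}_{h^{-1}} : H^A \to H^B$ with $\bar{\Psi}_{h^{-1}}([g]) = [\Psi_{h^{-1}}(g)]$.

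Finally, I would invoke Proposition \ref{prop:compo}, which states $\Psi_h \circ \Psi_{h^{-1}} = \id_{C(X_A,\Z)}$ and $\Psi_{h^{-1}} \circ \Psi_h = \id_{C(X_B,\Z)}$. Passing to the quotients, this yields $\bar{\Psi}_h \circ \bar{\Psi}_{h^{-1}} = \id_{H^A}$ and $\bar{\Psi}_{h^{-1}} \circ \bar{\Psi}_h = \id_{H^B}$. Therefore $\bar{\Psi}_h$ is a group isomorphism with inverse $\bar{\Psi}_{h^{-1}}$.

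There is essentially no obstacle here: all the non-trivial work was already absorbed into Proposition \ref{prop:compo} (the inductive telescoping calculations of Section 4) and into the short coboundary identity of Lemma \ref{lem:cobdy}. The only point deserving a remark is the word \emph{natural}, by which I would mean two things: $\bar{\Psi}_h$ is independent of the choice of cocycle data $(k_1,l_1)$ satisfying \eqref{eq:orbiteqx} (already verified at the level of $\Psi_h$ in the earlier lemma following Lemma \ref{lem:cocycle}), and the construction is functorial in $h$ in the sense that $\bar{\Psi}_{h^{-1}} = (\bar{\Psi}_h)^{-1}$. The positivity aspect (isomorphism of \emph{ordered} groups) is deliberately not part of this proposition and is deferred to Theorem \ref{thm:ordercoho}, which will additionally require the positivity of the cocycle classes $[c_1]$ and $[c_2]$.
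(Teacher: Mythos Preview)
Your proposal is correct and follows exactly the paper's approach: the paper simply prefaces the proposition with ``By Proposition \ref{prop:compo} and Lemma \ref{lem:cobdy}, we see'' and states the result, and you have spelled out precisely how those two ingredients combine to give well-definedness on the quotients and mutual inverses.
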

In \cite{MM}, it has been proved that 
$(H^A,H^A_+)$ is isomorphic to
$(H^B,H^B_+)$
as ordered groups by groupoid technique.
In this section, we will prove that the 
above isomorphism
$\Psi_h$ preserves their positive cone, that is
$\bar{\Psi}_h(H^B_+) =H^A_+$
without groupoid technique
so that 
$\bar{\Psi}_h$ induces an isomorphism from
$(H^B,H^B_+)$
to $(H^A,H^A_+)$
as ordered groups.

A subset $S \subset X_A$ is said to be $\sigma_A$-invariant
if $\sigma_A(S) = S$.
We similarly say 
$\bar{S} \subset \bar{X}_A$ to be $\bar{\sigma}_A$-invariant
if $\bar{\sigma}_A(\bar{S}) = \bar{S}$.
We note that 
a finite subset $S \subset X_A$ 
is $\sigma_A$-invariant if and only if 
there exists a finite family of periodic points
$x(i), i=1,\dots,m$ such that 
$x(i)$ is $p_i$-periodic for some $p_i \in {\mathbb{N}}$
and
$$
S = \{ \sigma_A^j(x(i)) \in X_A \mid j=0,1,\dots,p_i-1, \, i=1,\dots,m\}.
$$ 
\begin{lemma}[{\cite[Lemma 3.2]{MM}}] \label{lem:6.2}
For $f \in C(X_A,\mathbb{Z})$, we have
$[f]$ belongs to $H^A_+$ if and only if
for every eventually periodic point
$x$ with $\sigma_A^r(x) = \sigma_A^s(x)$ and $r-s >0$,
the value 
\begin{equation}
\omega_f^{r,s}(x) = 
\sum_{i=0}^{r-1} f(\sigma_A^i(x)) -\sum_{j=0}^{s-1} f(\sigma_A^j(x))
\end{equation}
satisfies
$\omega_f^{r,s}(x) \ge 0$.
If in particular $[f]$ is an order unit of $(H^A,H^A_+)$
if and only if $\omega_f^{r,s}(x) >0$.
\end{lemma}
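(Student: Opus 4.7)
The plan is to prove the two implications of the equivalence and then derive the order-unit assertion.

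For necessity, suppose $[f]\in H^A_+$, i.e.\ $f=g+\xi-\xi\circ\sigma_A$ with $g\in C(X_A,\mathbb{Z})$ satisfying $g\ge0$ and $\xi\in C(X_A,\mathbb{Z})$. Given an eventually periodic $x$ with $\sigma_A^r(x)=\sigma_A^s(x)$ and $r>s$, I would sum this relation along the first $r$ and $s$ iterates of $x$. The coboundary contributions telescope to $\xi(x)-\xi(\sigma_A^r x)$ and $\xi(x)-\xi(\sigma_A^s x)$ respectively, and cancel upon subtraction because $\sigma_A^r x=\sigma_A^s x$. Hence $\omega_f^{r,s}(x)=\omega_g^{r,s}(x)\ge0$ since $g\ge0$ and $r-s>0$.

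For sufficiency, I would lift to the two-sided shift via the isomorphism $H^A\cong\bar H^A$ from \cite[Lemma 3.1]{MM}, representing $[f]$ by $\bar f\in C(\bar X_A,\mathbb{Z})$ depending only on the forward coordinates. The key observation is that an eventually periodic $x\in X_A$ with $\sigma_A^r(x)=\sigma_A^s(x)$ and $r>s$ produces a genuine $(r-s)$-periodic point $y\in\bar X_A$ by extending $\sigma_A^s(x)$ periodically into the past, and conversely every periodic orbit of $\bar\sigma_A$ arises this way. Unwinding the definitions shows $\omega_f^{r,s}(x)=\sum_{i=0}^{r-s-1}\bar f(\bar\sigma_A^i y)$, so the hypothesis becomes the assertion that every Birkhoff period sum of $\bar f$ is nonnegative. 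An integer-valued Livsic-type construction on the irreducible SFT $\bar X_A$, building a coboundary $\zeta\in C(\bar X_A,\mathbb{Z})$ block-by-block along cylinder sets so that all running partial sums of $\bar f-(\zeta-\zeta\circ\bar\sigma_A)$ stay $\ge0$, then produces the required nonnegative representative of $[\bar f]$, whence $[f]\in H^A_+$.

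For the order-unit statement, note that $[1]$ is an order unit of $(H^A,H^A_+)$ because every $g\in C(X_A,\mathbb{Z})$ is bounded on the compact space $X_A$. Hence $[f]$ is an order unit if and only if $n[f]-[1]\in H^A_+$ for some $n\ge1$, which by the main equivalence applied to $nf-1$ translates to $n\,\omega_f^{r,s}(x)\ge r-s$ for every eventually periodic $x$ with $r>s$. The forward implication to strict positivity $\omega_f^{r,s}(x)>0$ is immediate; conversely, strict positivity on every periodic orbit of the irreducible Markov shift forces a uniform positive lower bound on the normalized averages $\omega_f^{r,s}(x)/(r-s)$ by compactness of the space of shift-invariant Borel probability measures, yielding the required $n$. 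The main obstacle lies in the sufficiency direction of the first equivalence: the integrality constraint on $\zeta$ rules out the classical real-valued averaging argument, and one must exploit the finite Markov structure of $\bar X_A$ to assemble $\zeta$ combinatorially while upgrading nonnegativity of periodic-orbit sums to pointwise nonnegativity of the corrected $\bar f$.
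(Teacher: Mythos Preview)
Your outline is sound, but it takes a much longer and harder route than the paper does. The paper treats this lemma as essentially a reformulation of two already-established facts: \cite[Lemma~3.2]{MM}, which says $[f]\in H^A_+$ iff $\sum_{x\in O}f(x)\ge 0$ for every finite $\sigma_A$-invariant set $O$, and \cite[Proposition~3.13]{BH} for the order-unit characterization. The entire proof in the paper is then a two-line calculation: for eventually periodic $x$ with $\sigma_A^r(x)=\sigma_A^s(x)$ and least period $p$ of $\sigma_A^s(x)$, one has $r-s=np$ and
\[
\omega_f^{r,s}(x)=\sum_{i=s}^{r-1}f(\sigma_A^i(x))=n\sum_{j=0}^{p-1}f(\sigma_A^{s+j}(x)),
\]
so the $\omega_f^{r,s}$ condition is exactly the orbit-sum condition of \cite{MM}.

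By contrast, you are effectively reproving \cite[Lemma~3.2]{MM} and \cite[Proposition~3.13]{BH} from scratch. Your necessity argument is fine and matches the spirit of the calculation above. But your sufficiency direction hinges on the integer-valued Livsic-type construction, which you correctly flag as ``the main obstacle'' and do not actually carry out; this is precisely the nontrivial content already packaged in \cite{BH} and \cite{MM}. Likewise, your order-unit argument via compactness of the simplex of invariant measures is a valid strategy, but again it is reproving what \cite[Proposition~3.13]{BH} provides. So your plan would work if completed, and it buys self-containment, but at the cost of redoing substantial pieces of \cite{BH}; the paper's approach buys brevity by leaning on those citations and reducing the lemma to a telescoping identity.
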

\begin{proof}
For $f \in C(X_A,\mathbb{Z})$, 
it has been shown in \cite[Lemma 3.2]{MM}
that 
$[f]$ belongs to $H^A_+$ if and only if
$\sum_{x \in O}f(x) \ge 0$
for every finite $\sigma_A$-invariant set $O$ of $X_A$.
Let
$x$ be an eventually periodic point
such that $\sigma_A^r(x) = \sigma_A^s(x)$ and $r-s >0$.
Let $p$ be the least period of $\sigma_A^s(x)$
so that $r-s = np$ for some $n \in {\mathbb{N}}$.
It then follows that 
\begin{equation*}
\omega_f^{r,s}(x) 
 = \sum_{i=s}^{r-1} f(\sigma_A^i(x)) 
= n\{ f(\sigma_A^s(x)) + f(\sigma_A^{s+1}(x)) + \cdots+
f(\sigma_A^{s+p-1}(x)) \}.
\end{equation*} 
Since the set 
$O =\{ \sigma_A^s(x), \sigma_A^{s+1}(x), \dots, \sigma_A^{s+p-1}(x) \}$
is a finite $\sigma_A$-invariant set of $X_A$,
one sees that 
$[f] \in H^A_+$ if and only if 
$\omega_f^{r,s}(x) \ge 0$ by \cite[Lemma 3.2]{MM}.
We know that, by \cite[Proposition 3.13]{BH}, 
the class $[f]$ is an order unit 
of $(H^A,H^A_+)$
if and only if $\omega_f^{r,s}(x) >0$.
\end{proof}

\begin{lemma}\label{lem:6.3}
For $x\in X_A$ with
$\sigma_A^r(x) = \sigma_A^s(x)$ and
$r-s = q \in {\mathbb{N}}$,
put
$z = \sigma_B^{l_1^s(x) +k_1^s(x)}(h(x)) \in X_B$
and
$r' = l_1^{q}(\sigma_A^s(x)), s' = k_1^{q}(\sigma_A^s(x))$.
Then we have
\begin{gather}
\sigma_B^{r'}(z) 
= 
\sigma_B^{s'}(z), 
\qquad
r' \ne s', 
\label{eq:l1np}\\
\omega_{\Psi_h(f)}^{r,s}(x)
 = 
\omega_f^{r',s'}(z) \quad \text{ for } \quad f \in C(X_B,\Z). 
\label{eq:oPhfx}
\end{gather}
\end{lemma}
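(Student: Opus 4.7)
Write $y := \sigma_A^s(x)$; since $r - s = q$, the hypothesis $\sigma_A^r(x) = \sigma_A^s(x)$ says $\sigma_A^q(y) = y$, so $y$ is $q$-periodic. The guiding idea is that all of the eventually periodic data on the $B$-side is controlled by the single point $h(y)$: the point $z$ will lie on its forward $\sigma_B$-orbit, and $\omega_f^{r',s'}$ will turn out to be constant along such orbits, which will reduce both claims to a single application of Lemma~\ref{lem:Am}.

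For \eqref{eq:l1np}, I first apply \eqref{eq:norbiteqx} at $y$ with $n = q$ to get $\sigma_B^{s'}(h(y)) = \sigma_B^{r'}(h(y))$. Next, applying \eqref{eq:norbiteqx} at $x$ with $n = s$ gives $\sigma_B^{l_1^s(x)}(h(x)) = \sigma_B^{k_1^s(x)}(h(y))$, hence
\begin{equation*}
z = \sigma_B^{l_1^s(x) + k_1^s(x)}(h(x)) = \sigma_B^{2 k_1^s(x)}(h(y)),
\end{equation*}
which lies on the forward $\sigma_B$-orbit of $h(y)$. Applying $\sigma_B^{2 k_1^s(x)}$ to the first identity yields $\sigma_B^{r'}(z) = \sigma_B^{s'}(z)$. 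The inequality $r' \ne s'$ is the same contradiction exploited in the proof of Lemma~\ref{lem:periodic}: Lemma~\ref{lem:klp}(i) at the periodic point $y$ reads
\begin{equation*}
k_2^{l_1^q(y)}(h(y)) + l_2^{k_1^q(y)}(h(y)) + q = k_2^{k_1^q(y)}(h(y)) + l_2^{l_1^q(y)}(h(y)),
\end{equation*}
which would collapse to $q = 0$ if $r' = l_1^q(y) = k_1^q(y) = s'$.

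For \eqref{eq:oPhfx}, observe that $\omega_{\Psi_h(f)}^{r,s}(x) = \sum_{i=s}^{r-1} \Psi_h(f)(\sigma_A^i(x))$ is precisely the left-hand side of Lemma~\ref{lem:Am} applied at base point $y$ with $m = q$. That lemma, together with $\sigma_A^q(y) = y$, collapses the sum to $\sum_{i'=0}^{r'-1} f(\sigma_B^{i'}(h(y))) - \sum_{j'=0}^{s'-1} f(\sigma_B^{j'}(h(y))) = \omega_f^{r',s'}(h(y))$. To replace $h(y)$ by $z$ I would check that, whenever $w \in X_B$ satisfies $\sigma_B^{r'}(w) = \sigma_B^{s'}(w)$, the quantity $\omega_f^{r',s'}(w)$ is $\sigma_B$-invariant: a one-line telescoping gives $\omega_f^{r',s'}(\sigma_B(w)) - \omega_f^{r',s'}(w) = f(\sigma_B^{r'}(w)) - f(\sigma_B^{s'}(w)) = 0$. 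Iterating $2 k_1^s(x)$ times transports $\omega_f^{r',s'}(h(y))$ to $\omega_f^{r',s'}(z)$.

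Conceptually the only puzzling feature is the choice of $z$ with exponent $l_1^s(x) + k_1^s(x)$; the argument above shows that any point on the forward $\sigma_B$-orbit of $h(y)$ would serve for this lemma, so the specific form of $z$ is presumably dictated by later applications. The only delicate executional step is thus the correct invocation of Lemma~\ref{lem:Am} at the base point $y$ with $m = q$; everything else is routine bookkeeping.
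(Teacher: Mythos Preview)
Your proof is correct and in fact noticeably cleaner than the paper's. The two diverge mainly in the verification of \eqref{eq:oPhfx}. The paper works directly at $x$: it applies Lemma~\ref{lem:Am} twice, once with $m=r$ and once with $m=s$, subtracts the resulting expressions, and then carries out a fairly long rearrangement (splitting several sums, using $\sigma_B^{l_1^r(x)}(h(x))=\sigma_B^{k_1^r(x)}(h(\sigma_A^r(x)))$ and its analogue for $s$, and finally cancelling two matching blocks) until the difference collapses to $\omega_f^{r',s'}(z)$. Your route shifts first to the periodic point $y=\sigma_A^s(x)$, applies Lemma~\ref{lem:Am} just once at $y$ with $m=q$, and immediately obtains $\omega_f^{r',s'}(h(y))$; the final transport from $h(y)$ to $z$ is handled by the one-line observation that $\omega_f^{r',s'}$ is $\sigma_B$-invariant along any forward orbit satisfying $\sigma_B^{r'}(\cdot)=\sigma_B^{s'}(\cdot)$. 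The paper's longer computation has the minor advantage that $z$ emerges naturally from the algebra, whereas in your argument $z=\sigma_B^{2k_1^s(x)}(h(y))$ is one of many possible choices; but since (as you note) any point on the forward $\sigma_B$-orbit of $h(y)$ works equally well here, this is not a defect. For \eqref{eq:l1np} the two arguments are essentially the same, yours phrased via $h(y)$ and the paper's via the cocycle identities $l_1^r(x)=l_1^s(x)+r'$, $k_1^r(x)=k_1^s(x)+s'$.
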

\begin{proof}
As $l_1^r(x) = l_1^s(x) + r'$
and
 $k_1^r(x) = k_1^s(x) + s'$,
 we have
\begin{align*}
\sigma_B^{r'}(z) 
=
& \sigma_B^{k_1^s(x) }(\sigma_B^{l_1^r(x)}(h(x))) \\
=
& \sigma_B^{k_1^s(x) }(\sigma_B^{k_1^r(x)}(h(\sigma_A^r(x)))) \\
=
& \sigma_B^{k_1^r(x) }(\sigma_B^{k_1^s(x)}(h(\sigma_A^s(x)))) \\
=& \sigma_B^{k_1^r(x) }(\sigma_B^{l_1^s(x)}(h(x)))\\
=&
\sigma_B^{s'}(z).
\end{align*}
The identity (i) of Lemma \ref{lem:klp} implies that 
\begin{equation*}
 k_2^{r'}(h(\sigma_A^s(x))) 
+ l_2^{s'}(h(\sigma_A^q(\sigma_A^s(x)))) + q 
=
 k_2^{s'}(h(\sigma_A^q(\sigma_A^s(x)))) 
+ l_2^{r'}(h(\sigma_A^s(x))).
\end{equation*}
As 
$\sigma_A^q(\sigma_A^s(x))
= \sigma_A^r(x) = \sigma_A^s(x)$
and
$q \ne 0$,
we have
$
{r'} \ne {s'}.
$

For $f \in C(X_B,{\mathbb{Z}})$,
Lemma \ref{lem:Am} yields 
\begin{equation*}
\sum_{i=0}^{m-1} 
\Psi_h(f)(\sigma_A^i(x)) 
=
\sum_{i'=0}^{l_1^{m} (x)-1} f(\sigma_B^{i'}( h(x)))
-
\sum_{j'=0}^{k_1^{m}(x)-1}  
f(\sigma_B^{j'}(h(\sigma_A^{m}(x)))).
\end{equation*}
Hence we have for $m=r,s$
\begin{align*}
  \omega_{\Psi_h(f)}^{r,s}(x) 
=&  \sum_{i=0}^{r-1} \Psi_h(f)(\sigma_A^i(x)) 
- \sum_{j=0}^{s-1} \Psi_h(f)(\sigma_A^j(x)) \\
=
&
\{ 
\sum_{i'=0}^{l_1^{r}(x)-1} f(\sigma_B^{i'}(h(x)))
-
\sum_{j'=0}^{k_1^{r}(x)-1}  
f(\sigma_B^{j'}(h(\sigma_A^{r}(x))))
\} 
\\
-&
\{
\sum_{i'=0}^{l_1^{s}(x)-1} f(\sigma_B^{i'}(h(x)))
-
\sum_{j'=0}^{k_1^{s}(x)-1}  
f(\sigma_B^{j'}(h(\sigma_A^{s}(x)))) \}. 
\end{align*}
The first summand of the first $\{ \cdot \}$ above goes to
\begin{equation*}
 \sum_{i'=0}^{l_1^{r} (x) + k_1^{s}(x)-1} f(\sigma_B^{i'}(h(x))) 
-
 \sum_{i'=l_1^{r}(x)}^{l_1^{r}(x) + k_1^{s}(x)-1} 
f(\sigma_B^{i'}(h(x))). 
\end{equation*}
The first summand of the second $\{ \cdot \}$ above goes to
\begin{equation*}
 \sum_{i'=0}^{l_1^{s} (x) + k_1^{r}(x)-1} 
f(\sigma_B^{i'}(h(x))) 
-
 \sum_{i'=l_1^{s}(x)}^{l_1^{s}(x)) + k_1^{r}(x)-1} 
f(\sigma_B^{i'}(h(x))). 
\end{equation*}
Hence we have
\begin{align*}
  \omega_{\Psi_h(f)}^{r,s}(x) 
= & \{ 
\sum_{i'=0}^{l_1^{r} (x) + k_1^{s}(x)-1} f(\sigma_B^{i'}(h(x)))
 - 
\sum_{i'=0}^{l_1^{s} (x) + k_1^{r}(x)-1} 
f(\sigma_B^{i'}(h(x))) \} \\
- 
&
\{ \sum_{i'=l_1^{r}(x)}^{l_1^{r}(x)) + k_1^{s}(x)-1} 
f(\sigma_B^{i'}(h(x)))
+
\sum_{j'=0}^{k_1^{r}(x)-1}  
f(\sigma_B^{j'}(h(\sigma_A^{r}(x))))
\} \\
 +
&
\{
 \sum_{i'=l_1^{s}(x)}^{l_1^{s}(x) + k_1^{r}(x)-1} 
f(\sigma_B^{i'}(h(x))) 
+
\sum_{j'=0}^{k_1^{s}(x)-1}  
f(\sigma_B^{j'}(h(\sigma_A^{s}(x)))) \}.
\end{align*}
Since
$
l_1^{r}(x) = l_1^{s}(x) + r'
$
and
$
k_1^{r}(x) = k_1^{s}(x) + s',  
$
the first $\{ \cdot \}$ above goes to
\begin{equation}
 \sum_{i=0}^{r'-1}
 f(\sigma_B^{i}(z))
-
\sum_{j=0}^{s'-1}
 f(\sigma_B^{j}(z)). \label{eq:first}
\end{equation}
Since
$
\sigma_B^{l_1^{r}(x)}(h(x)) 
=\sigma_B^{k_1^{r}(x)}(h(\sigma_A^{r}(x))),
$
the second $\{ \cdot \}$ above goes to
\begin{equation}
 \sum_{j'=0}^{k_1^{r}(x) + k_1^{s}(x)-1} 
f(\sigma_B^{j'}(h(\sigma_A^r(x)))).  \label{eq:second}
\end{equation}
Since
$
\sigma_B^{l_1^{s}(x)}(h(x)) 
=\sigma_B^{k_1^{s}(x)}(h(\sigma_A^{s}(x))),
$
the third $\{ \cdot \}$ above goe to
\begin{equation}
 \sum_{j'=0}^{k_1^{s}(x) + k_1^{r}(x)-1} 
f(\sigma_B^{j'}(h(\sigma_A^s(x)))). \label{eq:third} 
\end{equation}
As $\sigma_A^r(x) = \sigma_A^s(x)$,
we have
$
\eqref{eq:second} =\eqref{eq:third}, 
$
so that
$
\omega_{\Psi_h(f)}^{r,s}(x)
= 
\eqref{eq:first}.
$
\end{proof}

We define for $n=1,2,\dots.$
\begin{align*}
c_1(x) & = l_1(x) - k_1(x), \qquad 
c_1^n(x)  = l_1^n(x) - k_1^n(x), \qquad x \in X_A, \\
c_2(y) & = l_2(y) - k_2(y),  \qquad
c_2^n(y)  = l_2^n(y) - k_2^n(y), \qquad y \in X_B.
\end{align*}
The function $c_1$ (resp. $c_2$) is called the cocycle function
for $h$ (resp. $h^{-1}$).
It is clear that the following cocycle conditons hold:
\begin{align*}
c_1^{n+m}(x) & = c_1^n(x) + c_1^m(\sigma_A^n(x)), 
\qquad n, m \in {\mathbb{N}}, \, x \in X_A \\
c_2^{n+m}(y) & = c_2^n(y) + c_2^m(\sigma_B^n(y)), 
\qquad n, m \in {\mathbb{N}}, \, y \in X_B.
\end{align*}

\begin{lemma}\label{lem:sixeq}
The following conditions are equivalent:
\begin{enumerate}
\renewcommand{\labelenumi}{(\roman{enumi})}
\item
$[\Psi_h(f)] \in H^A_+$ for every 
$f \in C(X_B,{\mathbb{Z}})$ with $[f] \in H^B_+$.
\item
$[c_1] \in H^A_+$.
\item
$\omega_{c_1}^{r,s}(x) >0$ 
for $x \in X_A$ such that 
$\sigma_A^r(x) = \sigma_A^s(x)$ and $r-s >0$.
\item
$c_1^{q}(\sigma_A^s(x)) > 0$ 
for $x \in X_A$ such that 
$\sigma_A^r(x) = \sigma_A^s(x)$ and $r-s= q >0$.
\item
$l_1^r(x) + k_1^s(x) > k_1^r(x) + l_1^s(x)$ 
for $x \in X_A$ such that 
$\sigma_A^r(x) = \sigma_A^s(x)$ and $r-s >0$.
\item
$r' > s'$
for $x \in X_A$ such that 
$\sigma_A^r(x) = \sigma_A^s(x)$ and $r-s=q >0$,
where
$r' =l_1^{q}(\sigma_A^s(x)), s' = k_1^{q}(\sigma_A^s(x))$.
\end{enumerate}
\end{lemma}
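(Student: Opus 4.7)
The plan is to dispose of the equivalences (iii)$\Leftrightarrow$(iv)$\Leftrightarrow$(v)$\Leftrightarrow$(vi) as purely formal rewrites, and then close a short circle (i)$\Rightarrow$(ii)$\Rightarrow$(iii)$\Rightarrow$(i) using Lemmas~\ref{lem:6.2}, \ref{lem:6.3}, and \ref{lem:periodic}.

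For the block (iii)--(vi), write $q=r-s$. Directly from the definition, $\omega_{c_1}^{r,s}(x)=\sum_{j=s}^{r-1}c_1(\sigma_A^j(x))=c_1^q(\sigma_A^s(x))$, which identifies (iii) with (iv). The cocycle identity $c_1^r(x)=c_1^s(x)+c_1^q(\sigma_A^s(x))$ turns (iv) into $c_1^r(x)>c_1^s(x)$, which, on expanding $c_1^n=l_1^n-k_1^n$, is exactly (v). Finally $r'-s'=l_1^q(\sigma_A^s(x))-k_1^q(\sigma_A^s(x))=c_1^q(\sigma_A^s(x))$, so (vi) is yet another reading of (iv).

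For (i)$\Rightarrow$(ii), apply $\Psi_h$ to the constant function $1\in C(X_B,\Z)$, which visibly satisfies $[1]\in H^B_+$. Formula \eqref{eq:Psihfx} gives $\Psi_h(1)(x)=l_1(x)-k_1(x)=c_1(x)$, so hypothesis (i) forces $[c_1]=[\Psi_h(1)]\in H^A_+$. For (ii)$\Rightarrow$(iii), fix $x\in X_A$ with $\sigma_A^r(x)=\sigma_A^s(x)$ and $q=r-s\in\N$. Lemma~\ref{lem:6.2} applied to $c_1$ gives $\omega_{c_1}^{r,s}(x)\ge 0$, which by the identification above equals $c_1^q(\sigma_A^s(x))$. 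Since $\sigma_A^s(x)$ is $q$-periodic, the computation in the proof of Lemma~\ref{lem:periodic} (specifically the step that deduces $l_1^p\ne k_1^p$ on a $p$-periodic point from Lemma~\ref{lem:klp}(i)) yields $c_1^q(\sigma_A^s(x))\ne 0$, and a nonnegative nonzero integer is positive. This is the only nontrivial step in the whole lemma: without Lemma~\ref{lem:periodic} one would obtain only the weak inequality, and the strict sign demanded by (iii) would fail.

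For (iii)$\Rightarrow$(i), take $f\in C(X_B,\Z)$ with $[f]\in H^B_+$ and any eventually periodic $x\in X_A$ with $\sigma_A^r(x)=\sigma_A^s(x)$, $r>s$. Lemma~\ref{lem:6.3} supplies a point $z\in X_B$ and nonnegative integers $r',s'$ with $\sigma_B^{r'}(z)=\sigma_B^{s'}(z)$ and $\omega_{\Psi_h(f)}^{r,s}(x)=\omega_f^{r',s'}(z)$; the already-established equivalence (iii)$\Leftrightarrow$(vi) forces $r'>s'$. Hence Lemma~\ref{lem:6.2} applied in $X_B$ to $f$ gives $\omega_f^{r',s'}(z)\ge 0$, and therefore $\omega_{\Psi_h(f)}^{r,s}(x)\ge 0$. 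Since this holds for every eventually periodic orbit datum in $X_A$, a second application of Lemma~\ref{lem:6.2}, now in $X_A$, concludes $[\Psi_h(f)]\in H^A_+$, closing the circle.
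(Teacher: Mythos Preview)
Your proof is correct and follows essentially the same route as the paper: the same formal chain of equalities $\omega_{c_1}^{r,s}(x)=c_1^q(\sigma_A^s(x))=r'-s'=(l_1^r+k_1^s)-(k_1^r+l_1^s)$ for (iii)--(vi), the same use of $\Psi_h(1_B)=c_1$ for (i)$\Rightarrow$(ii), and the same reduction via Lemma~\ref{lem:6.3} and Lemma~\ref{lem:6.2} for (vi)$\Rightarrow$(i). The only cosmetic difference is that for the strictness in (ii)$\Rightarrow$(iii) you cite Lemma~\ref{lem:periodic} (via Lemma~\ref{lem:klp}(i)) to get $c_1^q(\sigma_A^s(x))\ne 0$, whereas the paper phrases the same fact as $r'\ne s'$ from Lemma~\ref{lem:6.3}; since $r'-s'=c_1^q(\sigma_A^s(x))$ and Lemma~\ref{lem:6.3}'s proof of $r'\ne s'$ is exactly the Lemma~\ref{lem:klp}(i) argument, these are the same step.
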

\begin{proof}
Let 
 $x \in X_A$ satisfy 
$\sigma_A^r(x) = \sigma_A^s(x)$ 
for some 
$r, s \in \Zp$ such that $ r-s= q\in {\mathbb{N}}$.
We then note that 
$r' - s' \ne 0$
by Lemma \ref{lem:6.3}.
The equivalences among (iii), (iv), (v) and (vi) 
come from the following equalities:
\begin{align*}
\omega_{c_1}^{r,s}(x)
= & c_1^r(x) - c_1^s(x) \\
= &  (l_1^r(x) - l_1^s(x)) - ( k_1^r(x) -k_1^s(x)) \\
= & \sum_{i=0}^{q-1} l_1(\sigma_A^{i}(\sigma_A^s(x))) 
-   \sum_{i=0}^{q-1} k_1(\sigma_A^{i}(\sigma_A^s(x))) \\
= & r' - s'  = c_1^q(\sigma_A^s(x)). 
\end{align*}
The equivalence between (ii) and (iii) follows from Lemma \ref{lem:6.2}.
Suppose that the condition (i) holds.
Take the constant function $1_B(y) = 1, y \in X_B$ as a function 
$f \in C(X_B,{\mathbb{Z}})$.
The condition (i) implies that
$[\Psi_h(1_B)] \in H^A_+$.
For $x \in X_A$ we have @
\begin{equation*}
\Psi_h(1_B)(x) 
=
\sum_{i=0}^{l_1(x)-1} 1_B(\sigma_B^{i}(h(x)))
-
\sum_{j=0}^{k_1(x)-1} 1_B(\sigma_B^{i}(h(\sigma_A(x)))) 
= c_1(x)
\end{equation*}
so that 
$[c_1] \in H^A_+$
and the condition (ii) holds.
We finally assume the condition (vi).
For a function $f \in C(X_B,{\mathbb{Z}})$ 
with $[f] \in H^B_+$
and
 $x \in X_A$ with $\sigma_A^r(x) = \sigma_A^s(x)$ and 
$ r-s >0$,
the condition (vi) implies 
$\omega_f^{r',s'}(z) >0$
from $[f] \in H^B_+$
by Lemma \ref{lem:6.2},
where
$z = \sigma_B^{l_1^s(x) + k_1^s(x)}(h(x))$.
Hence the equality
\eqref{eq:oPhfx}
implies
$\omega_{\Psi_h(f)}^{r,s}(x) > 0$ 
so that
$[\Psi_h(f)] \in H^A_+$
by Lemma \ref{lem:6.2} again.
This implies the condition (i).
\end{proof}


In the rest of the section,
we will show that $[c_1] \in H_+^A$ always holds. 
\begin{definition} \label{defn:attracting}
For $r,s \in \Zp$, an eventually periodic point $x \in X_A$
is said to be $(r,s)$-{\it attracting} 
if it satisfies the following two conditions:
\begin{enumerate}
\renewcommand{\labelenumi}{(\roman{enumi})}
\item
$\sigma_A^r(x) = \sigma_A^s(x).$ 
\item
For any clopen neighborhood $W \subset X_A$ of $x$,
there exist clopen sets $U, V \subset X_A$ and a homeomorphism
$\varphi: V \longrightarrow U$
 such that
{\begin{enumerate}
\item $x \in U \subset V \subset W$.
\item $\varphi(x) = x$.
\item $\sigma_A^r(\varphi(w)) = \sigma_A^s(w)$ for all $w \in V$. 
\item $\lim_{n \to\infty} \varphi^n(w) = x$  for all $w \in V$. 
\end{enumerate}}
\end{enumerate}
\end{definition}
For a word $\mu = \mu_1\cdots\mu_k \in B_k(X_A)$,
denote by $U_\mu \subset X_A$
the cylinder set
\begin{equation*}
U_\mu = 
\{ (x_n)_{n \in {\mathbb{N}}} \in X_A 
\mid x_1=\mu_1,\dots,x_k = \mu_k \}.
\end{equation*}
\begin{lemma}
An eventually periodic point is $(r,s)$-attracting
for some $r,s \in \Zp$.
\end{lemma}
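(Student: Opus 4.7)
The plan is to extract concrete exponents $r$ and $s$ from the eventual periodicity of $x$ and then, given an arbitrary clopen neighborhood $W$ of $x$, build $V$ as a sufficiently long cylinder around $x$ and define $\varphi$ by an ``insertion'' that overwrites the initial block of $w \in V$ with the initial block of $x$.

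Concretely, since $x$ is eventually periodic there exist $r,s \in \Zp$ with $r>s$ and $\sigma_A^r(x)=\sigma_A^s(x)$; set $\ell=r-s$, so that $x_{i+\ell}=x_i$ for every $i>s$. Given a clopen neighborhood $W$ of $x$, the cylinders $U_{x_1\cdots x_m}$ form a neighborhood base at $x$, so one can choose $m\ge r$ large enough that $V:=U_{x_1\cdots x_m}\subset W$. Define $\varphi\colon V\to X_A$ by
\begin{equation*}
\varphi(w)_i=x_i\quad(1\le i\le r),\qquad \varphi(w)_i=w_{i-\ell}\quad(i>r).
\end{equation*}

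The remaining work is a sequence of direct verifications built around the identity $x_{s+j}=x_{r+j}$ for $j\ge 1$. First, admissibility of $\varphi(w)$ at the ``seam'' $i=r$ reduces to $A(x_r,w_{s+1})=A(x_r,x_{s+1})=A(x_r,x_{r+1})=1$, after which admissibility follows from $w\in X_A$. Second, for $r<i\le m$ one has $\varphi(w)_i=w_{i-\ell}=x_{i-\ell}=x_i$, so $\varphi(V)\subset V$; the same computation, carried one block further, identifies $U:=\varphi(V)$ with the cylinder $U_{x_1\cdots x_{m+\ell}}$, a clopen subset of $V$ containing $x$. Third, $\varphi(x)=x$ and $\sigma_A^r(\varphi(w))=\sigma_A^s(w)$ are immediate from the defining formula. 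Fourth, $\varphi$ is a continuous injection on the compact space $V$, hence a homeomorphism onto $U$. Finally, iterating the formula gives $\varphi^n(w)\in U_{x_1\cdots x_{m+n\ell}}$, so $\varphi^n(w)\to x$.

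I do not anticipate any real obstacle: once $V$ and the insertion map $\varphi$ are chosen in accordance with the period structure of $x$, conditions (a)--(d) of Definition \ref{defn:attracting} fall out by direct substitution. The only point requiring a moment of care is checking that the insertion produces an admissible sequence landing inside $V$, both of which are handled by the single observation $x_{s+j}=x_{r+j}$ that encodes the eventual periodicity.
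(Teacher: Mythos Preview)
Your proof is correct and follows essentially the same approach as the paper: both choose a cylinder neighborhood $V$ of $x$ and define $\varphi$ by inserting one extra copy of the periodic block $\xi = x_{[s+1,r]}$ at the front, the paper phrasing this as the word substitution $\bar\nu \mapsto \bar\mu$ while you write it out coordinatewise. Your verification of conditions (a)--(d) is slightly more explicit than the paper's, but the construction is the same.
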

\begin{proof}
Let $x \in X_A$ be  an eventually periodic point
such that $\sigma_A^r(x) = \sigma_A^s(x)$
with $r \ne s$.
We may assume that 
$r >s$.
Put
the words
$\nu = x_{[1,s]}, \xi = x_{[s+1,r]}, \mu = x_{[1,r]}$.
One has
$\mu = \nu \xi$ and
$x = \nu \xi\xi\xi\cdots$.
For a clopen neighborhood $W \subset X_A$ of $x$,
there exist
$L \in {\mathbb{N}}$ such that
 by putting
 $\bar{\nu} =\nu \overbrace{\xi\cdots\xi}^L$
 and
 $\bar{\mu} =\mu \overbrace{\xi\cdots\xi}^{L+1}$,
one has 
 $ x \in U_{\bar{\mu}} \subset U_{\bar{\nu}} \subset W$.
 We set 
 $V = U_{\bar{\nu}}$ and $U = U_{\bar{\mu}}.$
Define 
$\varphi:V\longrightarrow U$
by substituting $\bar{\mu}$
for the left most word $\bar{\nu}$ of elements of $V$.
It is a homeomorphism
from $V$ to $U$ such that
$\varphi(x) = x$.
Since
$|\nu| = s, |\mu| =r$,
the equalities
$\sigma_A^r(\varphi(w)) = \sigma_A^s(w)$ for all $w \in V$
hold. 
As 
$
\varphi^n(w) 
$
begins with
$\nu \overbrace{\xi\cdots\xi}^{L+n}$,
we have
 $\lim_{n \to\infty} \varphi^n(w) = x$  for all $w \in V$. 
 \end{proof}
\begin{lemma}
If an eventually periodic point is $(r,s)$-attracting,
then $r >s$.
\end{lemma}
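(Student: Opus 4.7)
I would argue by contradiction: assume $r \le s$ and deduce that every point of the nonempty open set $V$ must be eventually periodic, contradicting the density of non eventually periodic points in $X_A$ (the same fact already exploited in the proof of Lemma \ref{lem:klp}).

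After fixing the data $U \subset V \subset W$ and $\varphi \colon V \to U$ from Definition \ref{defn:attracting}, I would pick an arbitrary $w \in V$ and track the orbit $w^{(n)} := \varphi^n(w)$, which by condition (d) converges to $x$. Condition (c) rewrites coordinatewise as
\[
w^{(n+1)}_{r+j} \;=\; w^{(n)}_{s+j}, \qquad j \ge 1, \ n \ge 0.
\]
If $r = s$, a one-step induction freezes $w^{(n)}_i = w_i$ for every $i > r$, and passing to the limit forces $\sigma_A^r(w) = \sigma_A^r(x)$, so $w$ inherits the eventual periodicity of $x$. If $r < s$, set $d := s - r > 0$; iterating the coordinate relation (which stays valid because $d > 0$ and $j \ge 1$) yields $w^{(n)}_{r+j} = w_{r + j + nd}$ for all $j \ge 1$ and $n \ge 0$, and convergence $w^{(n)} \to x$ forces the subsequence $\{w_{r+j+nd}\}_n$ to be eventually equal to $x_{r+j}$ for each fixed $j$, so $w$ is eventually periodic of period $d$.

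In either case every $w$ in the nonempty open set $V$ is eventually periodic, which is the sought contradiction; hence $r > s$. I do not foresee any serious obstacle: the only point requiring some care is the bookkeeping of indices in the $r < s$ induction, which is entirely routine.
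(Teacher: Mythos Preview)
Your proposal is correct and follows essentially the same approach as the paper: both argue by contradiction, split into the cases $r=s$ and $r<s$, track the coordinates of $\varphi^n(w)$ via condition (c), and ultimately invoke the density of non-eventually-periodic points. The only cosmetic difference is in the $r=s$ case: the paper picks a single $w\in V$ with $\sigma_A^r(w)\neq\sigma_A^r(x)$ and contradicts convergence directly, whereas you deduce $\sigma_A^r(w)=\sigma_A^r(x)$ for every $w\in V$ and contradict density; these are two sides of the same coin.
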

\begin{proof}
Let $x \in X_A$ be $(r,s)$-attracting.
For $W =X_A$, 
take clopen sets $U, V \subset X_A$ and a homeomorphism
$\varphi: V \longrightarrow U$
satisfying the conditions (ii) of Definition \ref{defn:attracting}.
We note that the matrix $A$ satisfies condition (I) 
in the sense of \cite{CK}
so that $X_A$ is homeomorphic to a Cantor set.
Assume that $r \le s$.
We have two cases.

Case 1 : $r=s$.

Take $w \in V$
such that
$w_{[r+1,\infty)} \ne x_{[r+1,\infty)}$.
By the condition (c) of (ii) in Definition \ref{defn:attracting},
 one sees 
$\sigma_A^r(\varphi^n(w)) = \sigma_A^r(w), n \in {\mathbb{N}}$
so that
$\lim_{n\to\infty}\sigma_A^r(\varphi^n(w)) = \sigma_A^r(w)$,
which contradicts to the condition
$\lim_{n\to\infty}\varphi^n(w) = x$
with
$w_{[r+1,\infty)} \ne x_{[r+1,\infty)}$.

Case 2 : $r <s$.

Put
$q = s - r \in {\mathbb{N}}.$
For all $w \in V$, we have
$\varphi(w)_{[r+1,\infty)} =w_{[s+1,\infty)}$.
As 
$\varphi^n(w) \in V$ for $n \in {\mathbb{N}}$,
 we have
$\varphi^n(w)_{[r+1,\infty)} =w_{[s+(n-1)q+1,\infty)}$
so that 
$\varphi^n(w)_{[r+1,r+q]} =w_{[s+(n-1)q+1,s+nq]}$ for all 
$n \in {\mathbb{N}}$.
Hence
$\lim_{n\to\infty} \varphi^n(w)$ does not exist 
unless $\sigma_A^r(w)$ is $q$-periodic.
There exists a point $w \in V$ 
which is not an eventually periodic point,
a contradiction.

Therefore the above two cases do not occur. 
\end{proof}

\begin{lemma}
If $x$ is $(r,s)$-attracting,
then
$h(x)$ is 
$(l_1^r(x) +k_1^s(x),   k_1^r(x) + l_1^s(x) )$-attracting.
\end{lemma}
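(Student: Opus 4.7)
\medskip

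\noindent\textbf{Proof plan.} Set $r' = l_1^r(x) + k_1^s(x)$ and $s' = k_1^r(x) + l_1^s(x)$. The plan is to directly transport the attracting data for $x$ across the homeomorphism $h$.

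First, I would verify condition (i) of Definition \ref{defn:attracting} for $h(x)$. Since $\sigma_A^r(x) = \sigma_A^s(x)$, applying Lemma \ref{lem:taileq}(i) with $z = x$, $p = r$, $q = s$ immediately gives
\begin{equation*}
\sigma_B^{r'}(h(x)) = \sigma_B^{l_1^r(x) + k_1^s(x)}(h(x)) = \sigma_B^{k_1^r(x) + l_1^s(x)}(h(x)) = \sigma_B^{s'}(h(x)).
\end{equation*}

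For condition (ii), fix an arbitrary clopen neighborhood $\bar W \subset X_B$ of $h(x)$. The key preparatory step is to shrink on the $X_A$-side so that the cocycle data becomes constant. Since $k_1, l_1$ are continuous with values in $\Zp$, so are the iterated sums $k_1^r, l_1^r, k_1^s, l_1^s$, hence there is a clopen neighborhood $W' \subset X_A$ of $x$ on which these four functions are constant (equal to their values at $x$). Put $W = W' \cap h^{-1}(\bar W)$, which is a clopen neighborhood of $x$. Apply the $(r,s)$-attracting property of $x$ to $W$ to obtain clopen sets $U \subset V \subset W$ and a homeomorphism $\varphi : V \to U$ fixing $x$, satisfying $\sigma_A^r(\varphi(w)) = \sigma_A^s(w)$ and $\varphi^n(w) \to x$ for all $w \in V$. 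Then define $\bar U = h(U)$, $\bar V = h(V)$, and $\psi = h \circ \varphi \circ h^{-1} : \bar V \to \bar U$.

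Conditions (a), (b), (d) are essentially automatic: $h(x) \in \bar U \subset \bar V \subset \bar W$ because $V \subset W \subset h^{-1}(\bar W)$ and $h$ is a homeomorphism; $\psi(h(x)) = h(\varphi(x)) = h(x)$; and $\psi^n(h(w)) = h(\varphi^n(w)) \to h(x)$ by continuity of $h$. The substantive step — and the one I expect to be the main obstacle — is condition (c), namely $\sigma_B^{r'}(\psi(y)) = \sigma_B^{s'}(y)$ for all $y \in \bar V$. Write $y = h(w)$ with $w \in V$. Since $\sigma_A^r(\varphi(w)) = \sigma_A^s(w)$, Lemma \ref{lem:taileq}(i) applied to $\varphi(w)$ and $w$ with parameters $p = r$, $q = s$ yields
\begin{equation*}
\sigma_B^{l_1^r(\varphi(w)) + k_1^s(w)}(h(\varphi(w))) = \sigma_B^{k_1^r(\varphi(w)) + l_1^s(w)}(h(w)).
\end{equation*}
The point is that $w$ and $\varphi(w)$ both lie in $W \subset W'$, where $l_1^r, k_1^r, l_1^s, k_1^s$ agree with their values at $x$; this is precisely why the preparatory shrinking was needed. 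Substituting these constants collapses the exponents to $r'$ and $s'$, giving $\sigma_B^{r'}(\psi(y)) = \sigma_B^{s'}(y)$, as required.

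Thus the constructive heart of the argument is the observation that continuity of the cocycles $k_1^n, l_1^n$ lets us localize everything so that $(r,s)$ on the $A$-side transports verbatim to $(r', s')$ on the $B$-side via the identities of Lemma \ref{lem:taileq}.
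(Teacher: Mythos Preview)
Your proof is correct and follows essentially the same approach as the paper: transport the clopen sets and the map $\varphi$ through $h$, after first shrinking to a clopen neighborhood where the iterated cocycles $l_1^r, k_1^r, l_1^s, k_1^s$ are constant, so that Lemma~\ref{lem:taileq}(i) applied to $\varphi(w)$ and $w$ yields condition (c) with the fixed exponents $r', s'$. The paper's argument is organized identically, with only cosmetic differences in notation.
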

\begin{proof}
For a clopen neighborhood $W' \subset X_B$ of $h(x)$, 
put 
a clopen neighborhood 
$W = h^{-1}(W') \subset X_A$ of $x$.
Since the functions 
$l_1^r, \,
k_1^s, \,
k_1^r, \,
l_1^s
$
are all continuous,
one may take 
$W$ small enough such that 
\begin{equation*}
l_1^r(w) = l_1^r(x), \quad
k_1^s(w) = k_1^s(x), \quad
k_1^r(w) = k_1^r(x), \quad
l_1^s(w) = l_1^s(x) \quad
\end{equation*}
for all $w \in W$.
Put
$r' =l_1^r(x) + k_1^s(x),  
s'  =k_1^r(x) + l_1^s(x). 
$
By Lemma \ref{lem:taileq},
one has
$\sigma_B^{r'}(h(x)) = \sigma_B^{s'}(h(x)).
$
Take 
clopen sets $U, V \subset X_A$ and a homeomorphism
$\varphi: V \longrightarrow U$
satisfying the condition (ii) of Definition \ref{defn:attracting}.
We set 
$U' = h(U), V' = h(V)$ of $X_B$
and a homeomorphism
$\varphi' = h \circ \varphi \circ h^{-1} |_{V'}:V' \longrightarrow U'$.
They satisfy
$h(x) \in U' \subset V' \subset W'.$
As
$\sigma_A^r (\varphi(w)) = \sigma_A^s(w)$
for $w \in V$,
Lemma \ref{lem:taileq}
ensures us
\begin{equation}
\sigma_B^{l_1^r(\varphi(w)) + k_1^s(w)}(h(\varphi(w))) 
= \sigma_B^{k_1^r(\varphi(w)) + l_1^s(w)}(h(w)) \label{eq:lrks}
\end{equation}
for $w \in V$.
Since
$\varphi(w) \in V$
for $w \in V$,
one sees that
$
l_1^r(\varphi(w)) = l_1^r(x),  
k_1^s(w)  = k_1^s(x), 
k_1^r(\varphi(w)) = k_1^r(x),  
l_1^s(w) = l_1^s(x).
$
Hence the equality \eqref{eq:lrks}
goes to
\begin{equation*}
\sigma_B^{r'}(h(\varphi(w))) 
= \sigma_B^{s'}(h(w)) 
\quad
\text{ and hence }
\quad
\sigma_B^{r'}(\varphi'(h(w))) 
= \sigma_B^{s'}(h(w)) 
\end{equation*}
for all $w \in V$.
The equality
$\lim_{n\to\infty}{\varphi'}^n(w) = h(x)$
for $w \in V$ is easily verified,
so that 
$h(x)$ is $(r',s')$-attracting.
\end{proof}
\begin{corollary} \label{cor:7.8}
Keep the above notations.
If $x \in X_A$ satisfies
$\sigma_A^r(x) = \sigma_A^s(x)$
for some $r > s$,
then  
$l_1^r(x) +k_1^s(x) > k_1^r(x) + l_1^s(x) $
and hence
$c_1^q(\sigma_A^s(x)) >0$ where $q = r-s$.
\end{corollary}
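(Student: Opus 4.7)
The plan is to simply chain the three preceding lemmas on $(r,s)$-attracting points, since all the real work has been done.

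First I would observe that the hypothesis $\sigma_A^r(x) = \sigma_A^s(x)$ with $r > s$ is exactly the input to the first attracting lemma, and inspection of its proof shows that the homeomorphism $\varphi$ constructed there uses $\nu = x_{[1,s]}$ and $\mu = x_{[1,r]}$, so it witnesses that $x$ is $(r,s)$-\emph{attracting} for precisely the given values of $r$ and $s$ (not merely for some other pair). This identification of the exponents is the only point that requires care.

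Next I would invoke the lemma that transports the attracting property through $h$: since $x$ is $(r,s)$-attracting and $h$ is a continuous orbit equivalence, $h(x) \in X_B$ is $(r',s')$-attracting where
\begin{equation*}
r' = l_1^r(x) + k_1^s(x), \qquad s' = k_1^r(x) + l_1^s(x).
\end{equation*}
Applying the second attracting lemma to $h(x) \in X_B$ (which uses condition (I) on $B$, equivalently that $X_B$ is a Cantor set, to rule out the cases $r' = s'$ and $r' < s'$) yields $r' > s'$, i.e.\ $l_1^r(x) + k_1^s(x) > k_1^r(x) + l_1^s(x)$, which is the first conclusion.

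Finally, the cocycle identities $l_1^r(x) = l_1^s(x) + l_1^q(\sigma_A^s(x))$ and $k_1^r(x) = k_1^s(x) + k_1^q(\sigma_A^s(x))$ (where $q = r-s$) rearrange the strict inequality just obtained into
\begin{equation*}
c_1^q(\sigma_A^s(x)) \;=\; l_1^q(\sigma_A^s(x)) - k_1^q(\sigma_A^s(x)) \;=\; r' - s' \;>\; 0,
\end{equation*}
giving the second conclusion. There is no real obstacle here: the corollary is an immediate bookkeeping consequence of the three attracting lemmas, with the only delicate issue being the verification that the attracting data in the first lemma is produced with the same $(r,s)$ pair as given in the hypothesis.
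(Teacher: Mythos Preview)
Your proposal is correct and matches the paper's intended argument: the corollary is stated without proof precisely because it follows immediately by chaining the three attracting-point lemmas exactly as you describe. Your observation that the proof of the first lemma actually establishes $(r,s)$-attractingness for the \emph{given} pair (not merely some pair) is the one subtlety worth recording, and you handle it correctly.
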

By using the above corollary with Lemma \ref{lem:sixeq},
we reach the following proposition and theorem.
\begin{proposition}
The class $[c_1]$
of the cocycle function
$c_1(x) = l_1(x) - k_1(x)$ for $x \in X_A$
in $H^A$
gives rise to a positive element
in the ordered cohomology group
$(H^A, H^A_+)$ which is an order unit in $(H^A, H^A_+)$.
\end{proposition}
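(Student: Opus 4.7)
The plan is to combine the strict inequality obtained in Corollary \ref{cor:7.8} with the equivalences of Lemma \ref{lem:sixeq} and the order-unit criterion of Lemma \ref{lem:6.2}; no new constructions are required, since all the substantive analysis has already been done.

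First I would pick any eventually periodic point $x \in X_A$ with $\sigma_A^r(x) = \sigma_A^s(x)$ and $q = r - s > 0$. Corollary \ref{cor:7.8} then immediately gives $c_1^q(\sigma_A^s(x)) > 0$, which is precisely condition (iv) of Lemma \ref{lem:sixeq}. Using the cocycle manipulation already recorded inside that lemma, one has
\begin{equation*}
\omega_{c_1}^{r,s}(x) = c_1^r(x) - c_1^s(x) = c_1^q(\sigma_A^s(x)) > 0,
\end{equation*}
so condition (iii) of Lemma \ref{lem:sixeq} holds with strict inequality for every such $x$.

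Next I would invoke the order-unit clause of Lemma \ref{lem:6.2}, which asserts that $[f]$ is an order unit of $(H^A, H^A_+)$ precisely when $\omega_f^{r,s}(x) > 0$ on all eventually periodic points with $r > s$. Specialising to $f = c_1$ then yields that $[c_1]$ is an order unit of $(H^A, H^A_+)$; in particular $[c_1] \in H^A_+$, which is the conclusion required.

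There is no real obstacle left at this point, since the hard work has been compressed into the earlier results: the attracting-periodic-point construction starting from Definition \ref{defn:attracting} is what produces the strict inequality in Corollary \ref{cor:7.8}, while Lemma \ref{lem:sixeq} converts between the six equivalent formulations of positivity. The one bookkeeping point to keep in view is that the order-unit characterisation in Lemma \ref{lem:6.2} is a genuine equivalence rather than merely a sufficient condition, which is supplied there via the reference to Boyle--Handelman.
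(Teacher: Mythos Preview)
Your proposal is correct and follows essentially the same route as the paper: the paper simply states that the proposition follows ``by using the above corollary with Lemma~\ref{lem:sixeq}'', and you have spelled out exactly that argument, invoking Corollary~\ref{cor:7.8} to obtain the strict inequality $c_1^q(\sigma_A^s(x))>0$ and then appealing to the order-unit criterion of Lemma~\ref{lem:6.2} (via the equivalences in Lemma~\ref{lem:sixeq}).
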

Therefore we have
\begin{theorem}[cf. {\cite[Theorem 3.5]{MM}}] \label{thm:ordercoho}
Let $h$ 
be a homeomorphism from $X_A$ to $X_B$
which gives rise to a continuous orbit equivalence 
between the one-sided topological Markov shifts
$(X_A,\sigma_A)$ and $(X_B,\sigma_B)$.
Then
there exist isomorphisms
$\Psi_h:C(X_B,\Z) \longrightarrow C(X_A,\Z)$
and
$\Psi_{h^{-1}}:C(X_A,\Z) \longrightarrow C(X_B,\Z)$
which are inverses to each other such that 
\begin{enumerate}
\renewcommand{\labelenumi}{(\roman{enumi})}
\item 
$\Psi_h(1_B) (x) = l_1(x) - k_1(x)$ for $x \in X_A$,
\item 
$\Psi_{h^{-1}}(1_A) (y) = l_2(y) - k_2(y)$ for $y \in X_B$,
\item
$[\Psi_h(f)] \in H_+^A$ for $[f] \in H_+^B$, 
\item
$[\Psi_{h^{-1}}(g)] \in H_+^B$ for $[g] \in H_+^A$ 
\end{enumerate}
so that
$\Psi_h$ induces an isomorphism
$\bar{\Psi}_h: 
(H^B, H_+^B)\longrightarrow (H^A, H_+^A)$
of the ordered cohomology groups
$(H^A, H_+^A)$ and 
$(H^B, H_+^B)$
as ordered groups.
\end{theorem}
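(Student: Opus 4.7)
The plan is to assemble the ingredients already built up in the preceding sections; by this stage the theorem is essentially a packaging statement. The maps $\Psi_h$ and $\Psi_{h^{-1}}$ are already constructed in \eqref{eq:Psihfx} and \eqref{eq:Psiinverseh}, and Proposition \ref{prop:compo} already shows that they are mutually inverse group homomorphisms. So the existence of the isomorphisms and the inverse relation require nothing further.

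First I would dispatch items (i) and (ii) by direct substitution. Plugging $f = 1_B$ into the defining formula \eqref{eq:Psihfx} gives
$$
\Psi_h(1_B)(x) = \sum_{i=0}^{l_1(x)-1} 1 - \sum_{j=0}^{k_1(x)-1} 1 = l_1(x) - k_1(x) \quad (x \in X_A),
$$
and item (ii) is the identical computation with the roles of $A, B$ and $h, h^{-1}$ exchanged in \eqref{eq:Psiinverseh}. Items (iii) and (iv) carry the actual mathematical content, but the work has already been done. Lemma \ref{lem:sixeq} shows that the three conditions ``$[\Psi_h(f)] \in H^A_+$ for every $[f] \in H^B_+$'' and ``$[c_1] \in H^A_+$'' are equivalent, and the proposition just before the theorem asserts the latter (moreover as an order unit). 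This yields (iii). For (iv), I would invoke the symmetric version: nothing in the construction of $\Psi_h$, in Lemma \ref{lem:sixeq}, or in the attracting-point argument that produced $[c_1] \in H^A_+$ used any asymmetry between $(X_A,\sigma_A)$ and $(X_B,\sigma_B)$, so the identical reasoning applied to $h^{-1}$ and $c_2$ gives $[\Psi_{h^{-1}}(g)] \in H^B_+$ whenever $[g] \in H^A_+$.

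The last step is the passage to the ordered cohomology groups. By Lemma \ref{lem:cobdy}, $\Psi_h$ carries coboundaries $f - f\circ\sigma_B$ to coboundaries $f\circ h - f\circ h\circ\sigma_A$, so it descends to a well-defined group homomorphism $\bar{\Psi}_h: H^B \to H^A$; likewise $\Psi_{h^{-1}}$ descends to $\bar{\Psi}_{h^{-1}}: H^A \to H^B$. These descended maps are mutually inverse by Proposition \ref{prop:compo}, and items (iii) and (iv) together assert that each carries the positive cone of its domain into the positive cone of its codomain, giving the desired isomorphism of ordered groups. The only step of genuine difficulty in the whole scheme is the positivity of $[c_1]$, which was settled by the $(r,s)$-attracting point construction using condition (I) on $A$; once that is in hand, the theorem is essentially bookkeeping.
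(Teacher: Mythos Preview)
Your proposal is correct and matches the paper's approach exactly: the theorem is stated immediately after the proposition that $[c_1]$ is a positive order unit, with only the word ``Therefore'' as proof, and your write-up simply spells out the assembly of Proposition~\ref{prop:compo}, Lemma~\ref{lem:cobdy}, Lemma~\ref{lem:sixeq}, and the positivity of $[c_1]$ that the paper leaves implicit. (Minor slip: you wrote ``the three conditions'' but listed two.)
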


\section{Periodic points and zeta functions} 
Continuous orbit equivalence between one-sided topological Markov shifts 
preserves their eventually periodic points.
Eventually periodic points of a one-sided topological Markov shift
naturally yield periodic points of the two-sided topological Markov shift. 
In this section,
we will study periodic points of two-sided topological Markov shifts
whose one-sided topological Markov shifts
are
continuously orbit equivalent.
Recall that 
$(\bar{X}_A,\bar{\sigma}_A)$
stands for the two-sided topological Markov shift
for matrix $A$.
For $p \in \N$,
put the set of periodic points
\begin{equation*}
\Per_p(\bar{X}_A) = \{\bar{x}= (x_n )_{n \in \Z} 
\in \bar{X}_A
\mid
\bar{\sigma}_A^p( \bar{x}) = \bar{x} \}
\end{equation*}
and
$\Per_*(\bar{X}_A) 
=\cup_{p=1}^\infty \Per_p(\bar{X}_A).
$
For $\bar{x} \in \Per_*(\bar{X}_A)$,
the subset
$
\gamma = \{ \bar{\sigma}_A^n(\bar{x}) \in \bar{X}_A \mid n \in \Z\}
$
of
$\bar{X}_A$
is called 
the periodic orbit of $\bar{x}$ under $\bar{\sigma}_A$. 
We call the cardinality $|\gamma|$ of $\gamma$
the period of $\gamma$ which is the least period of $\bar{x}$
under $\bar{\sigma}_A$.  
If $\bar{x} \in \Per_p(\bar{X}_A)$,
then $p = k |\gamma|$ for some $k \in \N$.
Let $P_{orb}(\bar{X}_A)$
be the set of periodic orbits of
$(\bar{X}_A,\bar{\sigma}_A)$.
Denote by
$\pi_A: \bar{X}_A \longrightarrow X_A$
the restriction of $\bar{X}_A$ to $X_A$ defined by
$\pi_A( (x_n)_{n \in \Z}) 
=(x_n)_{n \in \N}.
$
We are assuming that $h$ 
is a homeomorphism from $X_A$ to $X_B$
which gives rise to
a continuous orbit equivalence between
$(X_A,\sigma_A)$ and $(X_B,\sigma_B)$.
Recall that $c_1^p \in C(X_A,{\mathbb{Z}})$
for $p\in {\mathbb{N}}$ is the cocycle function
defined by
$c_1^p(x) = l_1^p(x) - k_1^p(x), x \in X_A$.
\begin{lemma}
There exists a map 
$
\psi_{h}:\Per_*(\bar{X}_A)\longrightarrow \Per_*(\bar{X}_B) 
$
such that for $\bar{x} \in \Per_p(\bar{X}_A)$,
\begin{align}
\sigma_B^{k_1^p(x)}(\pi_B(\psi_h(\bar{x}))) 
& = \sigma_B^{k_1^p(x)}(h(\pi_A(\bar{x}))), \label{eq:8.1}\\ 
\bar{\sigma}_B^{k_1(x)}(\psi_h(\bar{\sigma}_A(\bar{x})))
& = \bar{\sigma}_B^{l_1(x)}(\psi_h(\bar{x})), \label{eq:8.2} \\
\bar{\sigma}_B^{c_1^p(x)} (\psi_h (\bar{x})) 
& = \psi_h (\bar{x}), \label{eq:8.3}
\end{align}
where
$x = \pi_A(\bar{x})$.
\end{lemma}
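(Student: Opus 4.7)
The plan is to construct $\psi_h(\bar{x})$ canonically from the eventually periodic behaviour of $h(\pi_A(\bar{x}))$. Given $\bar{x}\in\Per_p(\bar{X}_A)$, set $x=\pi_A(\bar{x})$, so $\sigma_A^p(x)=x$. The argument in the proof of Lemma \ref{lem:periodic} (see \eqref{eq:3.10}) yields $\sigma_B^{k_1^p(x)}(h(x))=\sigma_B^{l_1^p(x)}(h(x))$, and Corollary \ref{cor:7.8} forces $q:=c_1^p(x)=l_1^p(x)-k_1^p(x)>0$. Hence the shifted tail $\sigma_B^{k_1^p(x)}(h(x))\in X_B$ is purely $q$-periodic; equivalently, $h(x)$ becomes $q$-periodic from coordinate $k_1^p(x)+1$ onwards. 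I would then define $\psi_h(\bar{x})$ to be the unique two-sided $q$-periodic point $\bar{y}\in\bar{X}_B$ determined by the requirement $\bar{y}_n=h(x)_n$ for every $n\geq k_1^p(x)+1$; existence and uniqueness follow by extending this periodic tail backwards under $\bar{\sigma}_B^q$, and admissibility in $\bar{X}_B$ is automatic since the cycle closes up in the graph of $B$.

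The first technical point is to verify that $\psi_h(\bar{x})$ is independent of the choice of period $p$. Any two periods $p,p'$ of $\bar{x}$ are multiples of the least period $p_0$, and the cocycle identities for $k_1^n,l_1^n$ together with $\sigma_A^{p_0}(x)=x$ show that $c_1^p(x)$ and $c_1^{p'}(x)$ are common multiples of $c_1^{p_0}(x)$. Consequently both prescriptions produce the two-sided extension of the same eventually periodic tail of $h(x)$, and hence yield the same point of $\bar{X}_B$.

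The three equalities can then be verified in sequence. Equation \eqref{eq:8.1} is immediate from the defining property $\bar{y}_n=h(x)_n$ for $n\geq k_1^p(x)+1$, since this gives $\sigma_B^{k_1^p(x)}(\pi_B(\bar{y}))=\sigma_B^{k_1^p(x)}(h(x))$. Equation \eqref{eq:8.3} is the tautology $\bar{\sigma}_B^q(\bar{y})=\bar{y}$ with $q=c_1^p(x)$. For \eqref{eq:8.2} I would first note that $k_1^p(\sigma_A(x))=k_1^p(x)$ and $c_1^p(\sigma_A(x))=c_1^p(x)$ (using $\sigma_A^p(x)=x$ in the cocycle identities), so that $\psi_h(\bar{\sigma}_A(\bar{x}))$ is again two-sided $q$-periodic; then \eqref{eq:orbiteqx} reads $h(\sigma_A(x))_{n+k_1(x)}=h(x)_{n+l_1(x)}$ for all $n\geq 1$, which says that $\bar{\sigma}_B^{k_1(x)}(\psi_h(\bar{\sigma}_A(\bar{x})))$ and $\bar{\sigma}_B^{l_1(x)}(\psi_h(\bar{x}))$ coincide on every sufficiently large positive coordinate; two two-sided $q$-periodic points of $\bar{X}_B$ that agree on any tail agree identically, giving \eqref{eq:8.2}. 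The main obstacle I anticipate is precisely this consistency across different periods $p$, because the formula \eqref{eq:8.1} is asserted for every $p$ with $\bar{\sigma}_A^p(\bar{x})=\bar{x}$ and not only the least one; the positivity $c_1^p(x)>0$ furnished by Corollary \ref{cor:7.8} is exactly what makes all the relevant $B$-periods of $h(x)$ organise around $c_1^{p_0}(x)$ and renders the construction canonical.
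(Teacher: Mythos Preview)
Your construction and verification are correct and follow essentially the same route as the paper: define $\psi_h(\bar{x})$ as the unique two-sided $c_1^p(x)$-periodic extension of the periodic tail $\sigma_B^{k_1^p(x)}(h(x))$, read off \eqref{eq:8.1} and \eqref{eq:8.3} from the definition, and deduce \eqref{eq:8.2} by matching tails via \eqref{eq:orbiteqx} together with $k_1^p(\sigma_A(x))=k_1^p(x)$ and the fact that two periodic bi-infinite sequences agreeing on a tail must coincide. Your explicit check that the construction is independent of the choice of period $p$ is a detail the paper leaves implicit.
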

\begin{proof}
For
$\bar{x}
\in \Per_p(\bar{X}_A),
$ 
put
$x =\pi_A(\bar{x})$
which 
is a $p$-periodic point of
 $X_A$. 
By \eqref{eq:3.10} and Corollary \ref{cor:7.8},
$h(x)$ is an eventually periodic point of $X_B$
such that 
$\sigma_B^{k_1^p(x)}(h(x))$ is a 
$c_1^p(x)$-periodic point.
There exists a unique element
$\bar{y}$
in $\bar{X}_B$
satisfying 
$\bar{\sigma}_B^{c_1^p(x)}(\bar{y}) = \bar{y}$
and
$
\pi_B(\bar{\sigma}_B^{k_1^p(x)}(\bar{y}))
=\sigma_B^{k_1^p(x)}(h(x))$.
That is a unique extension of the $c_1^p(x)$-periodic point
$\sigma_B^{k_1^p(x)}(h(x))$ 
to a two-sided sequence in $\bar{X}_B$.
Define
$\psi_{h}(\bar{x}) = \bar{y} \in \bar{X}_B$ 
so that 
\begin{equation}
\psi_h(\bar{x})_{[k_1^p(\sigma_A(x)),\infty)}
 = h(\pi_A(\bar{x}))_{[k_1^p(\sigma_A(x)),\infty)},
 \label{eq:psik1}
\end{equation}
and hence the equalities
\eqref{eq:8.1} and 
\eqref{eq:8.3} are obvious.
By 
$k_1^p(\sigma_A(x)) = k_1^p(x)$
and
\eqref{eq:psik1},
we have
\begin{align*}
\sigma_B^{k_1(x)} (h(\sigma_A(x)))_{[k_1^p(\sigma_A(x)),\infty)}
& = h(\pi_A(\bar{\sigma}_A(\bar{x})))_{[k_1^p(\sigma_A(x))+k_1(x),\infty)} \\
& = \psi_h(\bar{\sigma}_A(\bar{x}))_{[k_1^p(\sigma_A(x))+k_1(x),\infty)} \\
& = \bar{\sigma}_B^{k_1(x)}(\psi_h(\bar{\sigma}_A(\bar{x})))_{[k_1^p(x),\infty)} \\
\intertext{and}
\sigma_B^{l_1(x)}(h(x))_{[k_1^p(\sigma_A(x)),\infty)}
& =h(\pi_A(\bar{x}))_{[k_1^p(\sigma_A(x)) +l_1(x),\infty)}\\
& =\psi_h(\bar{x})_{[k_1^p(x) +l_1(x),\infty)}\\
& =\bar{\sigma}_B^{l_1(x)}(\psi_h(\bar{x}))_{[k_1^p(x),\infty)}
\end{align*}
so that the identity \eqref{eq:orbiteqx} implies
\begin{equation*}
\bar{\sigma}_B^{k_1(x)}(\psi_h(\bar{\sigma}_A(\bar{x})))_{[k_1^p(x),\infty)}
=\bar{\sigma}_B^{l_1(x)}(\psi_h(\bar{x}))_{[k_1^p(x),\infty)}
\end{equation*}
As both 
$\bar{\sigma}_B^{k_1(x)}(\psi_h(\bar{\sigma}_A(\bar{x})))$
and
$\bar{\sigma}_B^{l_1(x)}(\psi_h(\bar{x}))$
are periodic, we obtain
\eqref{eq:8.2}.
Thus 
$
\psi_{h}:\Per_*(\bar{X}_A)\longrightarrow \Per_*(\bar{X}_B)
$
satisfies the desired properties.
\end{proof}
By \eqref{eq:8.2}, the above map
$
\psi_{h}:\Per_*(\bar{X}_A)\longrightarrow \Per_*(\bar{X}_B)
$
preserves each orbit of periodic points so that it
induces a map
\begin{equation*}
\xi_{h}:P_{orb}(\bar{X}_A)\longrightarrow P_{orb}(\bar{X}_B)
\end{equation*}
such that
$\xi_h(\gamma) =
\{ \bar{\sigma}_B^m(\psi_h(\bar{\sigma}_A^n(\bar{x}))) \mid n, m \in \Z\} \subset \bar{X}_B$
for
$\gamma =\{ \bar{\sigma}_A^n(\bar{x}) \mid n \in \Z\}\subset \bar{X}_A$.
We also have a map
$\psi_{h^{-1}}:\Per_*(\bar{X}_B)\longrightarrow \Per_*(\bar{X}_A)
$
and the induced map
$
\xi_{h^{-1}}:P_{orb}(\bar{X}_B)\longrightarrow P_{orb}(\bar{X}_A)
$
for the inverse $h^{-1}: X_B \longrightarrow X_A$
of $h$.
\begin{lemma}
For $\bar{x} \in \Per_p(\bar{X}_A)$, 
put
$x = \pi_A(\bar{x}), q= c_1^p(x), n=k_1^p(x),
\bar{y}=\psi_h(\bar{x}), y = \pi_B(\bar{y})$.
Then we have
\begin{equation*}
\sigma_A^{l_2^n(y)+k_2^q(y)}(\pi_A(\psi_{h^{-1}}(\psi_h(\bar{x}))))
=\sigma_A^{l_2^n(y)+k_2^q(y)}(\pi_A(\bar{x}))
\end{equation*}
so that
$\xi_{h^{-1}}\circ \xi_h = \id$ on $P_{orb}(\bar{X}_A)$,
and similarly
$\xi_h \circ \xi_{h^{-1}}= \id$ on $P_{orb}(\bar{X}_B)$.
\end{lemma}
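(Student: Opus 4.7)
My plan is to reduce the assertion to a single one-sided tail identity and then derive it via Lemma~\ref{lem:taileq}(ii) combined with the $p$-periodicity of $x$. First I would unpack the definition of $\psi_{h^{-1}}$: since $\bar y=\psi_h(\bar x)$ lies in $\Per_q(\bar X_B)$ by \eqref{eq:8.3}, the analog of \eqref{eq:8.1} for $\psi_{h^{-1}}$ applied to $\bar y$ reads
\[
\sigma_A^{k_2^q(y)}(\pi_A(\psi_{h^{-1}}(\bar y)))=\sigma_A^{k_2^q(y)}(h^{-1}(y)).
\]
Applying $\sigma_A^{l_2^n(y)}$ to both sides, the claim reduces to proving $\sigma_A^{l_2^n(y)+k_2^q(y)}(h^{-1}(y))=\sigma_A^{l_2^n(y)+k_2^q(y)}(x)$.

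Next I would invoke \eqref{eq:8.1} itself, which gives $\sigma_B^n(y)=\sigma_B^n(h(x))$, and apply Lemma~\ref{lem:taileq}(ii) with $r=s=n$, $y\to y$, $w\to h(x)$, using $h^{-1}(h(x))=x$, to obtain
\[
\sigma_A^{l_2^n(y)+k_2^n(h(x))}(h^{-1}(y))=\sigma_A^{k_2^n(y)+l_2^n(h(x))}(x).
\]
Shifting both sides by $\sigma_A^{k_2^q(y)-k_2^n(h(x))}$ matches the LHS with the reduced target, while the RHS becomes $\sigma_A^{k_2^n(y)+l_2^n(h(x))+k_2^q(y)-k_2^n(h(x))}(x)$. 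By the $p$-periodicity of $x$, this equals $\sigma_A^{l_2^n(y)+k_2^q(y)}(x)$ precisely when $c_2^n(h(x))\equiv c_2^n(y)\pmod{p}$.

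To establish this congruence, I would use Lemma~\ref{lem:klp}(i) for the $p$-periodic $x$ with $l_1^p(x)=n+q$ and $k_1^p(x)=n$: this yields $c_2^{n+q}(h(x))-c_2^n(h(x))=p$, hence $c_2^q(\sigma_B^n(h(x)))=p$ by the cocycle formula, and since $\sigma_B^n(y)=\sigma_B^n(h(x))$ together with the $q$-periodicity of $y$ gives $c_2^q(y)=p$. Because $y$ is the $q$-periodic backward extension of the eventually periodic tail of $h(x)$, one has $y=\sigma_B^{jq}(h(x))$ for every $j\ge\lceil n/q\rceil$; expanding $c_2^{n+jq}(h(x))$ by the cocycle identity in two ways gives $c_2^n(y)=c_2^n(h(x))+jp-c_2^{jq}(h(x))$, so the congruence reduces to $c_2^{jq}(h(x))\equiv 0\pmod{p}$, which is verified by a telescoping of the partial cocycle sums on the pre-periodic prefix of $h(x)$. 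Once the reduced identity is established, both $\pi_A(\psi_{h^{-1}}(\psi_h(\bar x)))$ and $x$ are $p$-periodic with coinciding right tails, forcing $\psi_{h^{-1}}(\psi_h(\bar x))=\bar x$; hence $\xi_{h^{-1}}\circ\xi_h=\id$ on $P_{orb}(\bar X_A)$, and $\xi_h\circ\xi_{h^{-1}}=\id$ on $P_{orb}(\bar X_B)$ follows by the symmetric argument with the roles of $A$ and $B$ interchanged. The hard part will be the final telescoping argument for $c_2^{jq}(h(x))\equiv 0\pmod{p}$: while structurally dictated by the coordinated choices of $n=k_1^p(x)$ and $q=c_1^p(x)$, making the cocycle bookkeeping explicit over the pre-periodic prefix requires care.
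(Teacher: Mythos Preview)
Your first two steps are correct and match the paper's approach: you reduce via the defining property of $\psi_{h^{-1}}$ (the analog of \eqref{eq:8.1}) to comparing $h^{-1}(y)$ with $x$, and then use the tail identity coming from $\sigma_B^n(y)=\sigma_B^n(h(x))$. The paper does the same thing, applying \eqref{eq:sAk2n} twice (once for $y$, once for $h(x)$) rather than packaging it as Lemma~\ref{lem:taileq}(ii). At this point both $\pi_A(\psi_{h^{-1}}(\psi_h(\bar x)))$ and $x=\pi_A(\bar x)$ are periodic one-sided points sharing a common right tail, hence they lie in the same $\sigma_A$-periodic orbit; this already gives $\xi_{h^{-1}}\circ\xi_h=\id$ on $P_{orb}(\bar X_A)$, which is the content the paper actually extracts.

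The gap is in your attempt to go further and establish the displayed equality with the \emph{specific} exponent $l_2^n(y)+k_2^q(y)$. Your reduction to $c_2^{jq}(h(x))\equiv 0\pmod p$ is circular: your own identity $c_2^n(y)=c_2^n(h(x))+jp-c_2^{jq}(h(x))$ shows that $c_2^{jq}(h(x))\equiv 0\pmod p$ is \emph{equivalent} to $c_2^n(y)\equiv c_2^n(h(x))\pmod p$, which is precisely the congruence you set out to prove. The ``telescoping on the pre-periodic prefix'' you invoke is therefore not an independent input but a restatement of the target. In addition, the shift by $\sigma_A^{k_2^q(y)-k_2^n(h(x))}$ may have a negative exponent, which needs separate handling. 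Since the operative conclusion concerns periodic \emph{orbits}, stop after your tail identity and conclude orbit equality directly; the exact exponent match is not needed for $\xi_{h^{-1}}\circ\xi_h=\id$.
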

\begin{proof}
By \eqref{eq:8.1} for
$h^{-1}$ and $\bar{y}=\psi_h(\bar{x})$,
we have
\begin{equation*}
\sigma_A^{k_2^q(y)}(\pi_A(\psi_{h^{-1}}(\psi_h(\bar{x})))) 
 = \sigma_A^{k_2^q(y)}(h^{-1}(\pi_B(\psi_h(\bar{x})))).
\end{equation*}
As in \eqref{eq:8.1}, we have
$
\sigma_B^{n}(\pi_B(\psi_h(\bar{x}))) 
 = \sigma_B^{n}(h(x)).
$
As
\begin{align*}
\sigma_A^{l_2^n(y)}(h^{-1}(\pi_B(\psi_h(\bar{x})))) 
& =
\sigma_A^{k_2^n(y)}(h^{-1}(\sigma_B^n(\pi_B(\psi_h(\bar{x}))))) \\
& =
\sigma_A^{k_2^n(y)}
(h^{-1}(\sigma_B^n(h(x)))) 
=
\sigma_A^{l_2^n(y)}(\pi_A(\bar{x})),
\end{align*}
we see that
\begin{align*}
 & \sigma_A^{l_2^n(y) + k_2^q(y)}(\pi_A(\psi_{h^{-1}}(\psi_h(\bar{x})))) \\
=& \sigma_A^{k_2^q(y)}(\sigma_A^{l_2^n(y)}(h^{-1}(\pi_B(\psi_h(\bar{x})))))
= \sigma_A^{l_2^n(y) + k_2^q(y)}(\pi_A(\bar{x})).
\end{align*}
Hence
$\pi_A(\psi_{h^{-1}}(\psi_h(\bar{x})))$
and
$\pi_A(\bar{x})$
are in the same orbit in $X_A$
so that
$\psi_{h^{-1}}(\psi_h(\bar{x}))$
and
$\bar{x}$
are in the same orbit in $\bar{X}_A$.
Therefore we see that
$\xi_{h^{-1}}\circ \xi_h = \id$ on $P_{orb}(\bar{X}_A)$
and similarly
$\xi_h \circ \xi_{h^{-1}}= \id$ on $P_{orb}(\bar{X}_B)$.
\end{proof}
The above lemma says that continuous orbit equivalence between one-sided topological Markov shifts
yields a bijective correspondence
between the sets of periodic orbits of 
their two-sided topological Markov shifts.
\begin{lemma}\label{lem:factor}
Let $x \in X_A$
satisfy $\sigma_A^r(x) = \sigma_A^s(x)$
such that
$r -s = q = n p \in \N$ for some $n \in \N$,
where
$p$ is the least period of $\sigma_A^s(x)$.
Then 
\begin{equation*}
c_1^q(\sigma_A^s(x)) = n \cdot c_1^p(\sigma_A^s(x)).
\end{equation*}
\end{lemma}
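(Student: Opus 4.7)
The plan is to reduce the statement to a one-line induction using the cocycle property of $c_1^n$ combined with the fact that $\sigma_A^s(x)$ is exactly $p$-periodic.

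First I would set $y = \sigma_A^s(x)$, so the hypothesis becomes $\sigma_A^p(y) = y$ and the goal is $c_1^{np}(y) = n \cdot c_1^p(y)$ for all $n \in \mathbb{N}$. Recall that the cocycle identity
\begin{equation*}
c_1^{n+m}(y) = c_1^n(y) + c_1^m(\sigma_A^n(y))
\end{equation*}
was already recorded (it is immediate from the analogous identities for $k_1^n$ and $l_1^n$ stated in the first lemma of Section 3, by subtracting).

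Next I would proceed by induction on $n$. The base case $n=1$ is trivial. For the inductive step, using the cocycle identity with arguments $np$ and $p$,
\begin{equation*}
c_1^{(n+1)p}(y) = c_1^{np}(y) + c_1^p(\sigma_A^{np}(y)).
\end{equation*}
Since $\sigma_A^p(y) = y$, iterating yields $\sigma_A^{np}(y) = y$, so the second summand equals $c_1^p(y)$; combined with the inductive hypothesis $c_1^{np}(y) = n \cdot c_1^p(y)$ this gives $c_1^{(n+1)p}(y) = (n+1) \cdot c_1^p(y)$, closing the induction.

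There is essentially no obstacle here: the argument is purely formal, relying only on the additive cocycle property of $c_1^\bullet$ and the periodicity $\sigma_A^p(y) = y$. The one point to note is that the least-period hypothesis on $p$ is not actually needed for the conclusion itself; it only ensures the decomposition $q = np$ makes sense uniquely. Hence the lemma follows.
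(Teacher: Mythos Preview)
Your proof is correct and follows essentially the same idea as the paper: both exploit the additive structure of $c_1^q$ together with the $p$-periodicity of $\sigma_A^s(x)$. The paper simply unfolds the defining sum $c_1^q(\sigma_A^s(x))=\sum_{m=0}^{q-1}c_1(\sigma_A^{s+m}(x))$ and groups its $q=np$ terms into $n$ identical blocks of length $p$, whereas you package the same computation as an induction via the cocycle identity; the content is the same.
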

\begin{proof}
As $
\sigma_A^{s+j}(x) = \sigma_A^{s+ip+j}(x)$
for $j=0,1,\dots,p-1$ and $i= 0,1,\dots,n-1$,
we have
$$
c_1^q(\sigma_A^s(x)) 
=\sum_{m=0}^{q-1} c_1(\sigma_A^{s+m}(x))
=n \sum_{j=0}^{p-1} c_1(\sigma_A^{s+j}(x))
= n\cdot c_1^p(\sigma_A^s(x)).
$$
\end{proof}
\begin{lemma} \label{lem:rminuss}
Let $x \in X_A$
satisfy $\sigma_A^r(x) = \sigma_A^s(x)$
such that
$r -s = q \in \N$.
Put
$
z = \sigma_B^{l_1^s(x) +k_1^s(x)}(h(x) )\in X_B, \,
r' = l_1^q(\sigma_A^s(x)), \,  
s' = k_1^q(\sigma_A^s(x)), \,
q' = r' -s'.
$
Then we have
\begin{equation*}
c_2^{q'}(\sigma_B^{s'}(z)) = r-s.
\end{equation*}
\end{lemma}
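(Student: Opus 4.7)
\medskip
\noindent\textbf{Proof plan.}
Set $x' = \sigma_A^s(x)$, so that $\sigma_A^q(x') = \sigma_A^{s+q}(x) = \sigma_A^r(x) = \sigma_A^s(x) = x'$; thus $x'$ is $q$-periodic in $X_A$. My plan is to compute $c_2^{q'}(\sigma_B^{s'}(z))$ by (a) producing an auxiliary point on the $\sigma_B$-orbit of a $q'$-periodic point where Lemma~\ref{lem:klp}(i) gives the value $q$ directly, and (b) showing that this value is transferred to $\sigma_B^{s'}(z)$ because $c_2^{q'}$ is a coboundary-type invariant along the orbit.

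\medskip
\noindent\textbf{Step 1 (applying Lemma~\ref{lem:klp}(i)).}
I specialize Lemma~\ref{lem:klp}(i) with the substitutions $x \leftarrow x'$ and $p \leftarrow q$. Because $\sigma_A^q(x') = x'$, the identity collapses to
\begin{equation*}
k_2^{r'}(h(x')) + l_2^{s'}(h(x')) + q
= k_2^{s'}(h(x')) + l_2^{r'}(h(x')),
\end{equation*}
which I rearrange, using $c_2 = l_2 - k_2$ and the cocycle identity $c_2^{r'}(\cdot) = c_2^{s'}(\cdot) + c_2^{q'}(\sigma_B^{s'}(\cdot))$, into
$q = c_2^{q'}(\sigma_B^{s'}(h(x')))$.

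\medskip
\noindent\textbf{Step 2 (orbit comparison).}
I next verify that $\sigma_B^{s'}(h(x'))$ is genuinely $q'$-periodic: applying \eqref{eq:norbiteqx} with $n = q$ to $x'$ and using $\sigma_A^q(x') = x'$ gives $\sigma_B^{r'}(h(x')) = \sigma_B^{s'}(h(x'))$. Then I relate $\sigma_B^{s'}(z)$ to $\sigma_B^{s'}(h(x'))$: applying \eqref{eq:norbiteqx} with $n = s$ yields $\sigma_B^{k_1^s(x)}(h(x')) = \sigma_B^{l_1^s(x)}(h(x))$, and unfolding $z = \sigma_B^{l_1^s(x)+k_1^s(x)}(h(x))$ twice through this identity gives $\sigma_B^{s'}(z) = \sigma_B^{2k_1^s(x)}(\sigma_B^{s'}(h(x')))$. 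Hence $\sigma_B^{s'}(z)$ lies on the $\sigma_B$-orbit of the $q'$-periodic point $\sigma_B^{s'}(h(x'))$ and is itself $q'$-periodic.

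\medskip
\noindent\textbf{Step 3 (invariance of $c_2^{q'}$ on the orbit).}
A short cocycle manipulation shows that if $\sigma_B^{q'}(w) = w$ then $c_2^{q'}(\sigma_B^n(w)) = c_2^{q'}(w)$ for every $n \ge 0$: one expands $c_2^{n+q'}(w)$ in two ways using the cocycle identity and uses $\sigma_B^{q'}(w)=w$ to cancel. Combining this with Step~2 gives $c_2^{q'}(\sigma_B^{s'}(z)) = c_2^{q'}(\sigma_B^{s'}(h(x')))$, and Step~1 identifies the right-hand side with $q = r - s$.

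\medskip
\noindent\textbf{Main obstacle.}
The only delicate point is Step~2: tracking the chain of applications of \eqref{eq:norbiteqx} so as to show that $\sigma_B^{s'}(z)$ lies on the $\sigma_B$-orbit of $\sigma_B^{s'}(h(x'))$. Once that is in hand, both Step~1 (a direct specialization of Lemma~\ref{lem:klp}(i) to a periodic point) and Step~3 (a two-line cocycle computation) are routine.
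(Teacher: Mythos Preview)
Your argument is correct, and it proceeds by a genuinely different route than the paper's. The paper computes $c_2^{q'}(\sigma_B^{s'}(z))$ directly at the point $z$ by invoking Lemma~\ref{lem:6.3} (specifically \eqref{eq:oPhfx}): this identifies $c_2^{q'}(\sigma_B^{s'}(z)) = \omega_{c_2}^{r',s'}(z) = \omega_{\Psi_h(c_2)}^{r,s}(x)$, and then the heavy machinery of Proposition~\ref{prop:compo} ($\Psi_h\circ\Psi_{h^{-1}}=\id$) together with $c_2=\Psi_{h^{-1}}(1_A)$ collapses this to $\omega_{1_A}^{r,s}(x)=r-s$. In contrast, you bypass $\Psi_h$ entirely: you specialize the more elementary Lemma~\ref{lem:klp}(i) to the periodic point $x'=\sigma_A^s(x)$ to obtain $c_2^{q'}(\sigma_B^{s'}(h(x')))=q$ outright, and then transfer this value to $\sigma_B^{s'}(z)$ by the orbit identity $z=\sigma_B^{2k_1^s(x)}(h(x'))$ (a direct consequence of \eqref{eq:norbiteqx}) and the shift-invariance of $c_2^{q'}$ along a $q'$-periodic orbit. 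Your approach is more self-contained---it does not rely on the inversion formula for $\Psi_h$---at the cost of an extra explicit orbit chase; the paper's approach is shorter once the $\Psi_h$ machinery is in place and illustrates how that machinery packages such computations.
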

\begin{proof}
We note that by Corollary \ref{cor:7.8},
$q' = l_1^r(x) + k_1^s(x) - (k_1^r(x) + l_1^s(x) ) >0.$
It then follows that by Lemma \ref{lem:6.3} and \eqref{eq:oPhfx}, \eqref{eq:first},
\begin{align*}
c_2^{q'}(\sigma_B^{s'}(z)) 
& = l_2^{r'}(z) - l_2^{s'}(z) - (k_2^{r'}(z) - k_2^{s'}(z)) \\ 
& = \omega_{\Psi_h(l_2)}^{r,s}(x) -  \omega_{\Psi_h(k_2)}^{r,s}(x) 
  =  \omega_{\Psi_h(c_2)}^{r,s}(x).
\end{align*}
As
$c_2 =\Psi_{h^{-1}}(1_A)$
and
$\Psi_h(\Psi_{h^{-1}}(1_A)) =1_A$,
we have
$$
c_2^{q'}(\sigma_B^{s'}(z)) 
= \omega_{1_A}^{r,s}(x) = r-s.
$$
\end{proof}
\begin{lemma}
For $\gamma \in P_{orb}(\bar{X}_A)$
with
$|\gamma|=p$,
take $\bar{x} \in \gamma$ and put
$x = \pi_A(\bar{x})$.
Then we have
\begin{equation*}
| \xi_h(\gamma)| = c_1^p(x) (= \omega_{c_1}^{r,s}(x)).
\end{equation*}
\end{lemma}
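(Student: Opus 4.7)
Let $p=|\gamma|$, so $x=\pi_A(\bar{x})$ has least period $p$ in $X_A$. By the construction of $\psi_h$, the point $\bar{y}:=\psi_h(\bar{x})$ satisfies $\bar{\sigma}_B^{c_1^p(x)}(\bar{y})=\bar{y}$, hence $Q:=|\xi_h(\gamma)|$, the least period of $\bar{y}$, divides $c_1^p(x)$; write $c_1^p(x)=kQ$ with $k\in\mathbb{N}$. The whole claim reduces to proving $k=1$.

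Set $y':=\sigma_B^{k_1^p(x)}(h(x))=\pi_B(\bar{\sigma}_B^{k_1^p(x)}(\bar{y}))$, which has least period $Q$ in $X_B$. Applying Lemma \ref{lem:rminuss} with $r=p$ and $s=0$ gives $c_2^{c_1^p(x)}(y')=p$. The proof of Lemma \ref{lem:factor} uses only the cocycle identity and the least-period hypothesis, so the same reasoning applied to $c_2$ at the $Q$-periodic point $y'$ yields $c_2^{kQ}(y')=k\,c_2^Q(y')$. Combining the two identities gives
\begin{equation*}
c_2^Q(y')=\tfrac{p}{k},
\end{equation*}
which is in particular a positive integer.

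Now put $w:=\sigma_A^{k_2^Q(y')}(h^{-1}(y'))$. Relation \eqref{eq:sAk2n} with $y=y'$ and $n=Q$, combined with $\sigma_B^Q(y')=y'$, gives $\sigma_A^{k_2^Q(y')}(h^{-1}(y'))=\sigma_A^{l_2^Q(y')}(h^{-1}(y'))$, whence $\sigma_A^{c_2^Q(y')}(w)=w$, so that $w$ is $(p/k)$-periodic. Next, \eqref{eq:sAk2n} applied at $y=h(x)$ with $n=k_1^p(x)$ (noting that $\sigma_B^{k_1^p(x)}(h(x))=y'$) yields $\sigma_A^{n_0}(h^{-1}(y'))=\sigma_A^{n_1}(x)$, where $n_0=k_2^{k_1^p(x)}(h(x))$ and $n_1=l_2^{k_1^p(x)}(h(x))$. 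Consequently $\sigma_A^{n_0}(w)=\sigma_A^{k_2^Q(y')+n_1}(x)$ lies on the $\sigma_A$-orbit of $x$, which has least period $p$; being at the same time a shift of $w$, it is $(p/k)$-periodic. Therefore $p\mid p/k$, forcing $k=1$. The parenthetical identity $c_1^p(x)=\omega_{c_1}^{r,s}(x)$ is immediate from the definitions, for instance with $r=p$ and $s=0$.

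The principal obstacle is the last step: while the relation $p=k\,c_2^Q(y')$ drops out almost directly from Lemmas \ref{lem:rminuss} and \ref{lem:factor}, extracting $k=1$ requires a careful double use of \eqref{eq:sAk2n} to transport the $(p/k)$-periodicity of $w$ onto the $\sigma_A$-orbit of $x$. A minor auxiliary remark is that Lemma \ref{lem:factor}, stated only for $c_1$, applies verbatim to $c_2$ since its proof invokes only the cocycle identity.
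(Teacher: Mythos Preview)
Your proof is correct and follows essentially the same strategy as the paper: both arguments combine Lemma~\ref{lem:rminuss} (with $r=p$, $s=0$) and Lemma~\ref{lem:factor} to obtain $p=k\cdot c_2^{Q}(y')$, and then force $k=1$ by producing a point on the $\sigma_A$-orbit of $x$ (hence of least period $p$) which is simultaneously $c_2^{Q}(y')$-periodic. The only cosmetic difference is that the paper works with the auxiliary point $x'=\sigma_A^{l_2^{s'}(z)+k_2^{s'}(z)}(x)$ directly on the orbit of $x$ and verifies its periodicity, whereas you start from $w=\sigma_A^{k_2^Q(y')}(h^{-1}(y'))$, establish its $(p/k)$-periodicity first, and then transport it onto the orbit of $x$ via a second use of \eqref{eq:sAk2n}.
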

\begin{proof}
The least period of $x$ is $p$.
Put
$z = h(x)$
and
$
r' = l_1^p(x), 
s' = k_1^p(x), 
q' = r' - s'.
$
By Corollary \ref{cor:7.8} for $r=p$ and $s=0$, 
we know that $r' - s' = q'= c_1^p(x) >0$. 
Denote by $p'$ the least eventual period of $z$.
By Lemma \ref{lem:6.3},
$z$ has an eventual period 
$r' - s' = q'$.
Hence we have
\begin{equation}
c_1^p(x) =q' = n' \cdot p' \qquad \text{ for some } n' \in \N.
\label{eq:maru1}
\end{equation}
Since $|\xi_h(\gamma)|$ coincides with $p'$,
it suffices to prove that
$n' =1$.
As $p'$ is the least period of 
$\sigma_B^{s'}(z)$,
by applying Lemma \ref{lem:factor}
for $\sigma_B^{r'}(z) = \sigma_B^{s'}(z), r'- s'=q'=n' p'$ 
we have
\begin{equation}
c_2^{q'}(\sigma_B^{s'}(z)) 
= n' \cdot 
c_2^{p'}(\sigma_B^{s'}(z)). 
\label{eq:maru2}
\end{equation}
By applying Lemma \ref{lem:rminuss}
for $r=p, s=0, z=h(x), r' = l_1^p(x), s' = k_1^p(x), r'-s' = q'$,
we have
\begin{equation}
c_2^{q'}(\sigma_B^{s'}(z)) 
= p. 
\label{eq:maru2prime}
\end{equation}
Since 
$\sigma_B^{s'+p'}(z)= \sigma_B^{s'}(z)$,
by puting
$
x' = \sigma_A^{l_2^{s'}(z)+k_2^{s'}(z)}(x), 
$
we have
\begin{align*}
\sigma_A^{l_2^{p'}(\sigma_B^{s'}(z))}(x')
&= \sigma_A^{l_2^{p'}(\sigma_B^{s'}(z)) + l_2^{s'}(z)+k_2^{s'}(z)}(h^{-1}(z)) \\
&= \sigma_A^{k_2^{s'}(z)}(  \sigma_A^{l_2^{s'+p'}(z)}(h^{-1}(z)) \\
&= \sigma_A^{k_2^{s'}(z)}(  \sigma_A^{k_2^{s'+p'}(z)}(h^{-1}(\sigma_B^{s'+p'}(z))) \\
&= \sigma_A^{k_2^{s'+p'}(z)}(  \sigma_A^{k_2^{s'}(z)}(h^{-1}(\sigma_B^{s'}(z))) \\
&= \sigma_A^{k_2^{p'}(\sigma_B^{s'}(z)) +k_2^{s'}(z) +l_2^{s'}(z)}(h^{-1}(z))) \\
&= \sigma_A^{k_2^{p'}(\sigma_B^{s'}(z))}(x')
\end{align*}
so that
\begin{equation}
\sigma_A^{l_2^{p'}(\sigma_B^{s'}(z))}(x')
=\sigma_A^{k_2^{p'}(\sigma_B^{s'}(z))}(x'). \label{eq:maru5}
\end{equation}
The least period of $x'$ is equal to 
that of $x $
which is $p$.
By \eqref{eq:maru5},
we have 
\begin{equation*}
c_2^{p'}(\sigma_B^{s'}(z)) 
=
l_2^{p'}(\sigma_B^{s'}(z)) - k_2^{p'}(\sigma_B^{s'}(z)) 
= m' \cdot p
\qquad \text{ for some } m' \in \Z.
\end{equation*}
By \eqref{eq:maru2} and \eqref{eq:maru2prime},
we see
$$
p 
= c_2^{q'}(\sigma_B^{s'}(z)) 
=n' \cdot c_2^{p'}(\sigma_B^{s'}(z)) 
=n' \cdot m' \cdot p. 
$$ 
We thus conclude that $n' = m' = 1$
so that
\eqref{eq:maru1} implies $c_1^p(x) = p' = |\xi_h(\gamma)|$. 
\end{proof}

For $f \in C(X_A,\Z)$ with $[f] \in H^A_+$
and
a finite periodic orbit $O \subset X_A$,
which has a point $x \in X_A$ and $p \in \N$
such that 
$x = \sigma_A^p(x)$ and 
$O = \{ x,\sigma_A(x),\dots, \sigma_A^{p-1}(x)\}$,
we set
\begin{equation*}
\beta_O([f]) = \sum_{i=0}^{p-1}f(\sigma_A^i(x)).
\end{equation*} 
We may naturally identify 
finite periodic orbits of
$(X_A,\sigma_A)$ with 
finite periodic orbits of
$(\bar{X}_A,\bar{\sigma}_A)$. 
If for $\gamma \in P_{orb}(\bar{X}_A)$ 
and $f=1_A$, one sees that
$$
\beta_\gamma([1_A]) = \sum_{x \in \gamma}1_A(x) = |\gamma|
:\quad \text{the length of periodic orbit of } \gamma.
$$ 
\begin{lemma}
For a periodic point $x$ in $X_A$ 
with least period $p \in \N$,
put
$\gamma= \{ x,\sigma_A(x),\dots, \sigma_A^{p-1}(x)\}$
the orbit of $x$.
Then we have
\begin{equation*}
\beta_\gamma([c_1]) = |\xi_h(\gamma)|.
\end{equation*}
\end{lemma}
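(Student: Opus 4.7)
The plan is almost a direct bookkeeping exercise once the preceding lemma is in hand. First, I would unwind the definition of $\beta_\gamma$: since $\gamma = \{x,\sigma_A(x),\dots,\sigma_A^{p-1}(x)\}$ and $x$ has least period $p$, we have
\begin{equation*}
\beta_\gamma([c_1]) = \sum_{i=0}^{p-1} c_1(\sigma_A^i(x)).
\end{equation*}
By the iterated definition $c_1^p(x) = l_1^p(x) - k_1^p(x) = \sum_{i=0}^{p-1}(l_1 - k_1)(\sigma_A^i(x))$ introduced in Section 3, the right-hand side is precisely $c_1^p(x)$, so $\beta_\gamma([c_1]) = c_1^p(x)$.

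Next, I would identify the finite periodic orbit $\gamma \subset X_A$ with the corresponding finite periodic orbit in $P_{orb}(\bar{X}_A)$ under the natural correspondence noted just before the lemma statement; under this identification $|\gamma| = p$ and any $\bar{x} \in \gamma$ satisfies $\pi_A(\bar{x}) = x$ (up to a shift within the orbit, which does not affect $c_1^p(x)$ since $c_1^p$ is invariant along the periodic orbit of $x$). Then I would simply invoke the immediately preceding lemma, which states $|\xi_h(\gamma)| = c_1^p(x)$. Chaining the two equalities gives
\begin{equation*}
\beta_\gamma([c_1]) = c_1^p(x) = |\xi_h(\gamma)|,
\end{equation*}
as desired.

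For the sake of rigor I would check (briefly) that $\beta_\gamma$ is well-defined on the cohomology class, so that writing $\beta_\gamma([c_1])$ makes sense: if $f = \xi - \xi\circ\sigma_A$ is a coboundary, the sum $\sum_{i=0}^{p-1}(\xi(\sigma_A^i(x)) - \xi(\sigma_A^{i+1}(x)))$ telescopes to $\xi(x) - \xi(\sigma_A^p(x)) = 0$ using $\sigma_A^p(x) = x$. Similarly the value $c_1^p(x)$ depends only on the orbit $\gamma$, not on the choice of base point. There is no substantive obstacle at this step; the real content was assembled in the preceding lemma, where the interplay between the least eventual period $p'$ of $z = h(x)$, the cocycle values, and the two-sided extension $\psi_h(\bar{x})$ was used to force $n' = m' = 1$ in $c_1^p(x) = n'p'$. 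Given that, the present statement is merely a translation between the orbit-sum notation $\beta_\gamma$ and the iterated-cocycle notation $c_1^p$.
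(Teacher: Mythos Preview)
Your proof is correct and follows exactly the same route as the paper: compute $\beta_\gamma([c_1]) = \sum_{i=0}^{p-1} c_1(\sigma_A^i(x)) = c_1^p(x)$ from the definitions, then invoke the preceding lemma to identify $c_1^p(x)$ with $|\xi_h(\gamma)|$. Your additional remarks on well-definedness of $\beta_\gamma$ on cohomology classes and independence of the base point are sound but not needed for the argument.
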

\begin{proof}
In the preceding lemma,
one knows that
$
|\xi_h(\gamma)| = 
c_1^p(x).
$
Since
$
\beta_\gamma([c_1]) 
 =\sum_{i=0}^{p-1}c_1(\sigma_A^i(x)) =c_1^p(x),
$ 
one has
$
\beta_\gamma([c_1]) = |\xi_h(\gamma)|.
$
\end{proof}
Denote by 
$|\Per_n(\bar{X}_A)|$
the cardinality of the set
$\Per_n(\bar{X}_A)$ of $n$-periodic points of
$(\bar{X},\bar{\sigma}_A)$.
The zeta function $\zeta_A(t)$ 
for 
$(\bar{X},\bar{\sigma}_A)$
is defined by
\begin{equation*}
\zeta_A (t)
= \exp\left(
\sum_{n=1}^\infty\frac{t^n}{n}|\Per_n(\bar{X}_A)|
\right).
\end{equation*}
It has the following Euler product formula (see \cite[Section 6.4]{LM})
\begin{equation*}
\zeta_A(t) = \prod_{\gamma \in P_{orb}(\bar{X}_A)}
(1 - t^{|\gamma|})^{-1}.
\end{equation*}
In \cite[p. 176]{BH},
the zeta function of an order unit in  ordered cohomology group
has been studied related to flow equivalence of topological Markov shifts.
In our situation, 
the class $[c_1]$ of the cocycle function
$c_1(x) =  l_1(x) - k_1(x),  x \in X_A
$
gives rise to 
an order unit in the ordered group
$(H^A,H^A_+)$.
Hence the zeta function 
$\zeta_{[c_1]}(t) $ 
for the order unit $[c_1]$
may be defined in the sense of \cite{BH}, 
which goes to 
\begin{equation*}
\zeta_{[c_1]}(t) = \prod_{\gamma \in P_{orb}(\bar{X}_A)}
(1 - t^{\beta_\gamma([c_1])})^{-1}.
\end{equation*}
We note that by putting $t =e^{-s}$,
the zeta function  
$\zeta_{[c_1]}(t) $ coincides with the following zeta function
$\zeta_{\bar{\sigma}_A, \bar{c}_1}(s)$
so called the dynamical zeta function with potential
$\bar{c}_1(\bar{x}) = c_1 \circ \pi_A(\bar{x}), 
\bar{x} \in \bar{X}_A$
\begin{equation}
\zeta_{\bar{\sigma}_A,\bar{c}_1}(s)
= \exp\{
\sum_{n=1}^\infty\frac{1}{n}\sum_{\bar{x}\in \Per_n(\bar{X}_A)}
\exp( -s \sum_{k=0}^{n-1}\bar{c}_1(\bar{\sigma}_A^k(\bar{x}))) \} \label{eq:dynamiczeta}
\end{equation}
(see \cite{PP}, \cite{Ruelle1978}, \cite{Ruelle2002}).
If $[c_1] = [1_A]$ in $H^A_+$
so that the unital ordered groups
$(H^A,H^A_+,[1_A])$ and
$(H^B,H^B_+,[1_B])$
are isomorphic,
then
$\beta_\gamma([c_1]) = |\gamma|$ so that 
\begin{equation*}
\zeta_{[c_1]}(t) = \zeta_A(t).
\end{equation*}
As 
$\xi_h:P_{orb}(\bar{X}_A) \longrightarrow
P_{orb}(\bar{X}_B) 
$
is bijective,
the preceding lemma implies that the equality 
\begin{equation*}
\zeta_{[c_1]}(t) = \prod_{\gamma \in P_{orb}(\bar{X}_A)}
(1 - t^{|\xi_h(\gamma)|})^{-1}
=\zeta_B(t)
\end{equation*}
holds.
We thus have the following theorem which describes structure of
periodic points of the two-sided topological Markov shifts
in continuously orbit equivalent one-sided topological Markov shifts.   
\begin{theorem}\label{thm:zeta}
Suppose that
one-sided topological Markov shifts
$(X_A,\sigma_A)$ and $(X_B,\sigma_B)$
are continuously orbit equivalent 
through  a homeomorphism
 $h$ from $X_A$ to $X_B$ 
satisfying \eqref{eq:orbiteqx} and \eqref{eq:orbiteqy}.
Let
$\zeta_A(t)$
and
$\zeta_B(t)$
be the zeta functions for 
their two-sided topological Markov shifts
$(\bar{X}_A,\bar{\sigma}_A)$ 
and
$(\bar{X}_B,\bar{\sigma}_B)$ 
respectively.
Then the zeta functions
$\zeta_A(t)$
and
$\zeta_B(t)$
coincide with
 the zeta functions
$\zeta_{[c_2]}(t)$ and
$\zeta_{[c_1]}(t)$
for
the cocycle functions
$
c_2(y) =  l_2(y) - k_2(y),  y \in X_B
$
and
$
c_1(x) = l_1(x) - k_1(x), x \in X_A
$
respectively.
That is
\begin{equation*}
\zeta_A(t) = \zeta_{[c_2]}(t), \qquad
\zeta_B(t) = \zeta_{[c_1]}(t).
\end{equation*}
\end{theorem}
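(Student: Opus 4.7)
The plan is to read off the theorem directly from the Euler product formulation of the zeta functions, combined with the preceding lemma relating $\beta_\gamma([c_1])$ to $|\xi_h(\gamma)|$ and the bijectivity of $\xi_h$. Specifically, I would start from the Euler product expression
\begin{equation*}
\zeta_{[c_1]}(t) = \prod_{\gamma \in P_{orb}(\bar{X}_A)} (1 - t^{\beta_\gamma([c_1])})^{-1},
\end{equation*}
which is recorded in the paragraph preceding the theorem statement.

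Next, I would apply the immediately preceding lemma, which asserts $\beta_\gamma([c_1]) = |\xi_h(\gamma)|$ for every $\gamma \in P_{orb}(\bar{X}_A)$. Substituting this into the exponents yields
\begin{equation*}
\zeta_{[c_1]}(t) = \prod_{\gamma \in P_{orb}(\bar{X}_A)} (1 - t^{|\xi_h(\gamma)|})^{-1}.
\end{equation*}
I would then invoke the earlier lemma showing that $\xi_h: P_{orb}(\bar{X}_A) \longrightarrow P_{orb}(\bar{X}_B)$ is a bijection (its inverse being $\xi_{h^{-1}}$), so that the product may be reindexed by $\gamma' = \xi_h(\gamma)$, giving
\begin{equation*}
\zeta_{[c_1]}(t) = \prod_{\gamma' \in P_{orb}(\bar{X}_B)} (1 - t^{|\gamma'|})^{-1} = \zeta_B(t),
\end{equation*}
where the last equality is the Euler product for $\zeta_B(t)$.

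Finally, the symmetric identity $\zeta_A(t) = \zeta_{[c_2]}(t)$ follows by running the same argument with the roles of $(X_A,\sigma_A)$ and $(X_B,\sigma_B)$ interchanged, using the homeomorphism $h^{-1}: X_B \longrightarrow X_A$ and the cocycle function $c_2 = l_2 - k_2$ in place of $c_1$; all the relevant structural lemmas (the bijection $\xi_{h^{-1}}$, the identity $\beta_{\gamma'}([c_2]) = |\xi_{h^{-1}}(\gamma')|$) hold by the same proofs. The main obstacle here is really no obstacle at all: all the genuine work has been done in the preceding lemmas establishing (a) that $\xi_h$ is a well-defined bijection on periodic orbits, and (b) the equality $\beta_\gamma([c_1]) = |\xi_h(\gamma)|$ expressing the $c_1$-weighted orbit sum as the length of the corresponding orbit on the $B$-side. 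Given those, the theorem is simply a bookkeeping reindexing of the Euler product.
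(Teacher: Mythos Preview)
Your proposal is correct and follows essentially the same approach as the paper: the paper also derives the theorem in the paragraph immediately preceding its statement by substituting $\beta_\gamma([c_1]) = |\xi_h(\gamma)|$ into the Euler product for $\zeta_{[c_1]}(t)$ and reindexing via the bijection $\xi_h$, then appealing to symmetry for the companion identity.
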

One may easily see that the condition
 $[c_1] =[1_A]$ in $H^A$
 implies
$[c_2] =[1_B]$ in $H^B$.
Hence if 
$[c_1] =[1_A]$
or
$[c_2] =[1_B]$,
then $\zeta_A(t) = \zeta_B(t)$.
This shows that the two-sided Markov shifts
$(\bar{X}_A,\bar{\sigma}_A)$ and $(\bar{X}_B,\bar{\sigma}_B)$ 
are almost conjugate 
(cf. \cite[Theorem 9.3.2]{LM}).

\begin{corollary}[cf. {\cite[Theorem 3.5]{MM}}]
Suppose that
one-sided topological Markov shifts
$(X_A,\sigma_A)$ and $(X_B,\sigma_B)$
are continuously orbit equivalent.
Then we have
$\det(\id - A) = \det(\id -B)$. 
\end{corollary}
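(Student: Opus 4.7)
The plan is to derive flow equivalence of the two-sided topological Markov shifts $(\bar{X}_A,\bar{\sigma}_A)$ and $(\bar{X}_B,\bar{\sigma}_B)$ from the continuous orbit equivalence of their one-sided counterparts, and then invoke the Parry--Sullivan theorem that $\det(\id - A)$ is a flow-equivalence invariant. The heavy lifting is already done by Theorem \ref{thm:ordercoho}; what remains is to assemble the pieces.

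First, Theorem \ref{thm:ordercoho} supplies an isomorphism $\bar{\Psi}_h:(H^B,H^B_+)\longrightarrow (H^A,H^A_+)$ of ordered cohomology groups for the one-sided shifts. Combining this with the natural identifications $(H^A,H^A_+)\cong(\bar{H}^A,\bar{H}^A_+)$ and $(H^B,H^B_+)\cong(\bar{H}^B,\bar{H}^B_+)$ recorded in \cite[Lemma 3.1]{MM}, I obtain an isomorphism
\begin{equation*}
(\bar{H}^A,\bar{H}^A_+)\cong (\bar{H}^B,\bar{H}^B_+)
\end{equation*}
of the ordered cohomology groups of the two-sided shifts. By the Boyle--Handelman theorem \cite[Theorem 1.12]{BH}, such an isomorphism is equivalent to flow equivalence of $(\bar{X}_A,\bar{\sigma}_A)$ and $(\bar{X}_B,\bar{\sigma}_B)$. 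Finally, Parry--Sullivan \cite{PS} shows that the determinant $\det(\id - A)$ depends only on the flow-equivalence class of $(\bar{X}_A,\bar{\sigma}_A)$, whence $\det(\id - A)=\det(\id - B)$.

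There is no real obstacle in this corollary: it is a formal chaining together of Theorem \ref{thm:ordercoho}, \cite[Lemma 3.1]{MM}, \cite[Theorem 1.12]{BH}, and \cite{PS}. The point of stating it here is to emphasize that the determinant invariance, which in \cite{MM} was derived via groupoid techniques, now follows directly from the concrete construction of $\Psi_h$ and the positivity of the class $[c_1]$ established in the preceding section. One could alternatively try to extract the conclusion from the zeta-function identities of Theorem \ref{thm:zeta} together with the rational-function identity $\zeta_A(t)=1/\det(\id - tA)$, but this would require additional bookkeeping on how the cocycle classes $[c_1]$, $[c_2]$ affect the Euler product, whereas the flow-equivalence route gives the result immediately.
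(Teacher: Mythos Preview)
Your argument is correct, but it takes a route that the paper deliberately avoids. The paper's own proof does \emph{not} go through flow equivalence at all: it uses Theorem~\ref{thm:zeta} directly. From $\zeta_B(t)=\zeta_{[c_1]}(t)$ and the identification $\zeta_{[c_1]}(t)=\zeta_{\bar\sigma_A,\bar c_1}(s)$ for $t=e^{-s}$, one sets $s=0$ (so $t=1$); at $s=0$ the potential disappears from \eqref{eq:dynamiczeta} and the dynamical zeta function collapses to the ordinary $\zeta_A$. Thus $\zeta_B(1)=\zeta_A(1)$, and the rational identity $\zeta_A(t)=1/\det(\id-tA)$ gives $\det(\id-A)=\det(\id-B)$. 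The paper explicitly remarks after the proof that this is ``direct without using [BH] and [PS].''

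So the comparison is this: you invoke Theorem~\ref{thm:ordercoho} and then outsource the remaining work to Boyle--Handelman and Parry--Sullivan, which is perfectly valid and indeed is the route taken in \cite{MM}; the paper instead uses its new zeta-function identity (Theorem~\ref{thm:zeta}) to bypass both external results entirely. Ironically, the zeta-function route you dismissed as requiring ``additional bookkeeping'' is exactly the paper's one-line proof---the bookkeeping amounts to nothing more than observing that the weight $\exp(-s\sum \bar c_1)$ becomes $1$ at $s=0$. Your approach buys conceptual familiarity; the paper's buys self-containment and makes the corollary a genuine dividend of the zeta-function analysis rather than a repackaging of known flow-equivalence theory.
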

\begin{proof}
We know $\zeta_{[c_1]}(t) = 
\zeta_{\bar{\sigma}_A,\bar{c}_1}(s)$
for $t = e^{-s}$.
As $\zeta_B(t) = \zeta_{[c_1]}(t)$,
by putting $s=0$ in \eqref{eq:dynamiczeta}, 
we get
$\det(\id - B) = \det(\id -A)$. 
\end{proof}
The above corollary has been already shown in \cite[Theorem 3.5]{MM}
by using \cite{BH} and \cite{PS}.
Our proof in this paper is direct without using their results.

\section{Invariant measures}
In this section, we will show that the set of
$\sigma_A$-invariant measures on $X_A$ 
is invariant under the continuous orbit equivalence class of one-sided topological Markov shift
$(X_A,\sigma_A)$.
Throughout the section,
a measure means  a regular Borel measure.
We denote by
$M(X_A,\sigma_A),M(X_A,\sigma_A)_+$
and
$P(X_A,\sigma_A)$
the set of $\sigma_A$-invariant measures on $X_A$,
the set of $\sigma_A$-invariant positive measures on $X_A$ and
the set of $\sigma_A$-invariant probability measures on $X_A$
respectively.
We identify a regular Borel measure on $X_A$ 
with a continuous linear functional on the commutative $C^*$-algebra
$C(X_A,{\mathbb{C}})$ of ${\mathbb{C}}$-valued continuous functions on $X_A$.
By the identification, one may write
\begin{align*}
M(X_A,\sigma_A)
& = \{ \varphi \in C(X_A,{\mathbb{C}})^* \mid
\varphi(f \circ \sigma_A) = \varphi(f) \text{ for all } f \in C(X_A,{\mathbb{C}}) \}, \\ 
M(X_A,\sigma_A)_+
& = \{ \varphi \in M(X_A,\sigma_A) \mid
\varphi(f) \ge 0 \text{ for all } f \in C(X_A,{\mathbb{C}}) \text{ with }
f \ge 0 \}, \\ 
P(X_A,\sigma_A)
& = \{ \varphi \in M(X_A,\sigma_A)_+ \mid
\varphi(1) = 1 \}. 
\end{align*}
Let
$h$ be a homemorphism
which gives rise to a continuous orbit equivalence between
$(X_A,\sigma_A)$ and $(X_B,\sigma_B)$.
For $f \in C(X_B,{\mathbb{C}})$ and $x \in X_A$,
let us define
$\Psi_h(f)(x) $
 by the same formula as
\eqref{eq:Psihfx}.
We use the same notation 
$\Psi_h$ as the previous sections without confusions.
\begin{lemma}
Keep the above notations.
The map
$\Psi_h:  C(X_B,{\mathbb{C}}) \longrightarrow  C(X_A,{\mathbb{C}})$
is a continuous linear map such that
\begin{enumerate}
\renewcommand{\labelenumi}{(\roman{enumi})}
\item 
$\varphi \circ \Psi_h \in M(X_B,\sigma_B)$ 
for $\varphi \in M(X_A,\sigma_A)$.
\item 
$\varphi \circ \Psi_h \in M(X_B,\sigma_B)_+$ 
for $\varphi \in M(X_A,\sigma_A)_+$.
\item 
If in particular $[c_1] = [1]$ in $H^A$, we have  
$\varphi \circ \Psi_h \in P(X_B,\sigma_B)$ 
for $\varphi \in P(X_A,\sigma_A)$.
\end{enumerate}
\end{lemma}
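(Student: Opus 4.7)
My plan is to verify the three assertions in turn, after first confirming that $\Psi_h$ is a well-defined continuous linear map. Linearity is immediate from the defining formula \eqref{eq:Psihfx}, which extends verbatim from $\mathbb{Z}$-valued to $\mathbb{C}$-valued functions; and since $k_1, l_1$ are $\mathbb{Z}_+$-valued continuous functions on the compact space $X_A$, they are bounded, yielding a uniform estimate of the form $\|\Psi_h(f)\|_\infty \le (\|l_1\|_\infty + \|k_1\|_\infty)\|f\|_\infty$.

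For (i), the idea is to use Lemma \ref{lem:cobdy}(i), which is stated for integer-valued $f$ but remains valid for $f \in C(X_B,\mathbb{C})$ by $\mathbb{C}$-linearity of both sides. Applying $\varphi \in M(X_A,\sigma_A)$ to the identity $\Psi_h(f) - \Psi_h(f\circ\sigma_B) = f\circ h - f\circ h\circ\sigma_A$ makes the right-hand side vanish by $\sigma_A$-invariance of $\varphi$, giving the desired $\sigma_B$-invariance of $\varphi \circ \Psi_h$.

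For (ii), I would first dispose of the case $f \in C(X_B,\mathbb{Z})$ with $f \ge 0$: then $[f] \in H^B_+$, and by Theorem \ref{thm:ordercoho} we have $[\Psi_h(f)] \in H^A_+$, so there exist a non-negative $g \in C(X_A,\mathbb{Z})$ and an $\eta \in C(X_A,\mathbb{Z})$ with $\Psi_h(f) = g + \eta - \eta\circ\sigma_A$. Positivity and $\sigma_A$-invariance of $\varphi$ then give $\varphi(\Psi_h(f)) = \varphi(g) \ge 0$. To extend to an arbitrary non-negative $f \in C(X_B,\mathbb{R})$, I would exploit that $X_B$ is a Cantor set (because $B$ satisfies condition (I)): on a fine enough clopen partition, $f$ is uniformly close to a non-negative locally constant function whose values can be chosen in $\frac{1}{n}\mathbb{Z}_+$ for any prescribed $n$. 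Hence $f$ is a uniform limit of functions $\frac{1}{n}h_n$ with $h_n \in C(X_B,\mathbb{Z})$, $h_n \ge 0$, and the continuity of $\Psi_h$ and of $\varphi$ preserves the inequality $\varphi(\Psi_h(\tfrac{1}{n}h_n)) = \tfrac{1}{n}\varphi(\Psi_h(h_n)) \ge 0$ in the limit. Any non-negative $f \in C(X_B,\mathbb{C})$ is automatically real-valued, so this covers the full statement.

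For (iii), parts (i) and (ii) already place $\varphi \circ \Psi_h$ in $M(X_B,\sigma_B)_+$; it remains only to check the normalization $(\varphi\circ\Psi_h)(1_B) = 1$. By Theorem \ref{thm:ordercoho}(i), $\Psi_h(1_B) = c_1$, and the hypothesis $[c_1] = [1_A]$ in $H^A$ produces $\eta \in C(X_A,\mathbb{Z})$ with $c_1 - 1_A = \eta - \eta\circ\sigma_A$, so $\varphi(c_1) = \varphi(1_A) = 1$ for any $\varphi \in P(X_A,\sigma_A)$. The only genuinely delicate point in the whole argument is the approximation step in (ii), but the totally disconnected topology of $X_B$ makes it routine; the rest is a direct application of Theorem \ref{thm:ordercoho} and Lemma \ref{lem:cobdy}.
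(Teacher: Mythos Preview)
Your proof is correct and follows essentially the same route as the paper. The only cosmetic difference is in part (ii): the paper handles a nonnegative real-valued $f$ by writing it as a nonnegative combination $\sum r_i\chi_{U_{\mu(i)}}$ of cylinder-set indicators (tacitly using that locally constant functions are dense), whereas you make the approximation by $\tfrac{1}{n}h_n$ with $h_n\in C(X_B,\mathbb{Z}_+)$ explicit; both reduce to the integer-valued case via Theorem~\ref{thm:ordercoho} in the same way.
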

\begin{proof}
(i)
Since $l_1, k_1 \in C(X_A,{\mathbb{Z}}_+)$, 
there exists $0< M \in {\mathbb{R}}$ such that 
$\sup_{x \in X_A}(l_1(x) + k_1(x)) \le M$, so that
it is easy to see that the inequality
$\| \Psi_h(f) \| \le M \| f \|$ for $f \in C(X_A,{\mathbb{C}})$
holds.
Hence 
$\Psi_h:  C(X_B,{\mathbb{C}}) \longrightarrow  C(X_A,{\mathbb{C}})$
is a continuous linear map.
We note that 
the same equality
as in  Lemma \ref{lem:cobdy}
holds for $f \in  C(X_B,{\mathbb{C}})$
by its proof.
For 
$\varphi \in M(X_A,\sigma_A)$
and
$f \in  C(X_B,{\mathbb{C}})$, 
it follows that
\begin{align*}
\varphi(\Psi_h(f \circ \sigma_B))
& = \varphi(\Psi_h(f \circ \sigma_B - f) + \Psi_h(f)) \\
& = \varphi(f \circ h \circ \sigma_A - f\circ h) + \varphi(\Psi_h(f)) \\
& = \varphi(\Psi_h(f))
\end{align*}
so that 
$\varphi \circ \Psi_h $ is $\sigma_B$-invariant.

(ii)
We next assume 
$\varphi  \in M(X_A,\sigma_A)_+$
a positive measure on $X_A$.
As 
$\bar{\Psi}_h({H}^B_+) \subset {H}^A_+$,
for
$f \in  C(X_B,{\mathbb{Z}}_+)$
there exist
$f_o \in  C(X_A,{\mathbb{Z}}_+),$
$g_o \in  C(X_B,{\mathbb{Z}})$
such that
$$
\Psi_h(f) = f_o + g_o - g_o\circ\sigma_A
$$
 so that
\begin{equation*}
\varphi(\Psi_h(f)) 
=\varphi( f_o + g_o - g_o\circ\sigma_A) 
=\varphi( f_o )\ge 0.
\end{equation*}
 Let us next consider  a nonnegative real valued function $f$ on $X_B$.
It is written 
$
f = \sum_{i=1}^n r_i \chi_{U_{\mu(i)}}
$
for some
$ 0 \le r_i \in {\mathbb{R}}$
and
$\mu(i) \in B_*(X_B), i=1,\dots,n$,
where
$\chi_{U_{\mu(i)}}$ is the characteristic function of the cylinder set
$U_{\mu(i)}$ for the word $\mu(i)$.
Since
$\Psi_h$ is linear and $\chi_{U_{\mu(i)}} \in C(X_B,{\mathbb{Z}}_+)$,
one has
$\Psi_h(\chi_{U_{\mu(i)}}) \ge 0$ as above so that
 \begin{equation*}
\varphi(\Psi_h(f)) 
= \sum_{i=1}^n r_i \varphi(\chi_{U_{\mu(i)}}) \ge 0
\end{equation*}
and hence we have
 $\varphi\circ\Psi_h \in M(X_A,\sigma_A)_+$.
 
 (iii)
If  
$[c_1] = [1] $ in $H^A$, we have
 \begin{equation*}
\varphi(\Psi_h(1)) 
= \varphi(c_1) = \varphi(1).
\end{equation*}
This implies that
$\varphi\circ\Psi_h \in P(X_A,\sigma_A)_+$
for 
$\varphi \in P(X_B,\sigma_B)_+$.
\end{proof} 

\begin{theorem}\label{thm:measure}
Suppose that
the  one-sided topological Markov shifts
$(X_A,\sigma_A)$ and $(X_B,\sigma_B)$
are continuously orbit equivalent 
through a homeomorphism
 $h:X_A\longrightarrow X_B$.
Then
the previousely defined isomorphisms
\begin{equation*}
\Psi_h: C(X_B,{\mathbb{Z}}) \longrightarrow C(X_A,{\mathbb{Z}}),
\qquad
\Psi_{h^{-1}}: C(X_A,{\mathbb{Z}}) \longrightarrow C(X_B,{\mathbb{Z}})
\end{equation*}
 extend to continuous linear maps between the Banach spaces
\begin{equation*}
\Psi_h: C(X_B,{\mathbb{C}}) \longrightarrow C(X_A,{\mathbb{C}}),
\qquad
\Psi_{h^{-1}}: C(X_A,{\mathbb{C}}) \longrightarrow C(X_B,{\mathbb{C}})
\end{equation*}
 which are inverses to each other such that 
\begin{enumerate}
\item
$\Psi_h$ (resp. $\Psi_{h^{-1}}$) maps
the $\sigma_A$ (resp. $\sigma_B$)-invariant regular Borel positive measures 
on $X_A$ (resp. $X_B$)
to 
the $\sigma_B$ (resp. $\sigma_A$)-invariant regular Borel positive measures 
on $X_B$ (resp. $X_A$).
\item
If in particular,
the class $[c_1]$ (resp. $[c_2]$) of the cocycle function 
$c_1$ (resp. $c_2$) is cohomologus to $1$ in $H^A$ (resp. $H^B$),
$\Psi_h$ (resp. $\Psi_{h^{-1}}$) maps
the $\sigma_A$ (resp. $\sigma_B$)-invariant 
regular Borel probability measures 
on $X_A$ (resp. $X_B$) to the $\sigma_B$ (resp. $\sigma_A$)-invariant 
regular Borel probability measures on $X_B$ (resp. $X_A$).
\end{enumerate}
 \end{theorem}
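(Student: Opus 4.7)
The plan is to build the theorem on top of the lemma immediately preceding it, so the main task reduces to three things: extending $\Psi_h$ continuously, checking the extended maps are mutual inverses, and translating the preceding lemma into the language of measures.

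First I would extend the formula \eqref{eq:Psihfx} verbatim to $f\in C(X_B,\mathbb{C})$. Since $l_1,k_1$ are continuous on the compact space $X_A$, they are uniformly bounded, say by $M\in\mathbb{N}$, and the extended $\Psi_h(f)$ is a finite sum of continuous functions, hence lies in $C(X_A,\mathbb{C})$ with $\|\Psi_h(f)\|\le 2M\|f\|$. The analogous construction gives the extended $\Psi_{h^{-1}}\colon C(X_A,\mathbb{C})\to C(X_B,\mathbb{C})$, and both extensions are $\mathbb{C}$-linear and bounded. To see they are mutual inverses, I would first use $\mathbb{Q}$-linearity to upgrade Proposition \ref{prop:compo} from $C(X_A,\mathbb{Z})$ to $C(X_A,\mathbb{Q})$, then invoke density: since $X_A$ is a Cantor set, the rational step functions (finite $\mathbb{Q}$-linear combinations of characteristic functions of cylinders, all of which lie in $C(X_A,\mathbb{Z})$ up to scaling) are uniformly dense in $C(X_A,\mathbb{R})$, and hence in $C(X_A,\mathbb{C})$. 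Boundedness of $\Psi_h$ and $\Psi_{h^{-1}}$ then forces $\Psi_h\circ\Psi_{h^{-1}}=\id$ everywhere, and symmetrically for the other composition.

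Now for part (1), given $\varphi\in M(X_A,\sigma_A)_+$, I would verify that the functional $\varphi\circ\Psi_h$ on $C(X_B,\mathbb{C})$ is $\sigma_B$-invariant and positive. Invariance follows from the coboundary identity in Lemma \ref{lem:cobdy}(i), which remains valid for complex $f$ by the same direct computation: one has $\varphi(\Psi_h(f\circ\sigma_B-f))=\varphi(f\circ h\circ\sigma_A-f\circ h)=0$ because $\varphi$ is $\sigma_A$-invariant. Positivity is the content of part (ii) of the preceding lemma: for $f\in C(X_B,\mathbb{Z}_+)$ the positivity of $\bar{\Psi}_h$ on ordered cohomology (Theorem \ref{thm:ordercoho}) lets one write $\Psi_h(f)=f_o+g_o-g_o\circ\sigma_A$ with $f_o\ge 0$, so $\varphi(\Psi_h(f))=\varphi(f_o)\ge 0$; then for a nonnegative rational step function one decomposes into characteristic functions of cylinders and uses linearity; finally density of such step functions in the cone of nonnegative continuous real functions, combined with continuity of $\Psi_h$ and of $\varphi$, yields $\varphi(\Psi_h(f))\ge 0$ for every $f\ge 0$ in $C(X_B,\mathbb{C})$. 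The Riesz representation theorem then identifies $\varphi\circ\Psi_h$ with a $\sigma_B$-invariant regular Borel positive measure on $X_B$. The analogous argument for $\Psi_{h^{-1}}$ uses $[c_2]\in H^B_+$.

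For part (2), assume $[c_1]=[1_A]$ in $H^A$, so $c_1=1_A+g-g\circ\sigma_A$ for some $g\in C(X_A,\mathbb{Z})$. By Theorem \ref{thm:ordercoho}(i) we have $\Psi_h(1_B)=c_1$, hence for any $\varphi\in P(X_A,\sigma_A)$,
\begin{equation*}
(\varphi\circ\Psi_h)(1_B)=\varphi(c_1)=\varphi(1_A)+\varphi(g)-\varphi(g\circ\sigma_A)=\varphi(1_A)=1,
\end{equation*}
so $\varphi\circ\Psi_h\in P(X_B,\sigma_B)$; symmetrically under $[c_2]=[1_B]$. The main obstacle is the passage from the integer-valued combinatorics (already handled in the earlier sections) to continuous complex-valued functions and measures; but this is really only a density-plus-continuity argument, and the positivity statement of part (1) is the only place where something beyond routine extension is needed, and it has been reduced to the ordered-cohomology work done in Theorem \ref{thm:ordercoho}.
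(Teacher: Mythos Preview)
Your proof is correct and follows essentially the same approach as the paper, which deduces the theorem directly from the preceding lemma. One small remark: your density argument for extending $\Psi_h\circ\Psi_{h^{-1}}=\id$ to $C(X_A,\mathbb{C})$ is unnecessary, since the proof of Proposition~\ref{prop:compo} is a pointwise identity in which the values of $g$ are merely added and subtracted, so it applies verbatim to complex-valued functions; on the other hand, your explicit density step for positivity is slightly more careful than the paper's exposition, which tacitly identifies an arbitrary nonnegative continuous function with a finite nonnegative combination of cylinder characteristic functions.
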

Hence 
the set of the shift-invariant regular Borel measures on the one-sided shift space is invariant under continuous orbit equivalence of one-sided topological 
Markov shifts.

\end{document}